\definecolor{codegreen}{rgb}{0,0.6,0}
\definecolor{codegray}{rgb}{0.5,0.5,0.5}
\definecolor{codepurple}{rgb}{0.58,0,0.82}
\definecolor{backcolour}{rgb}{0.95,0.95,0.92}
\lstdefinestyle{mystyle}{
	backgroundcolor=\color{backcolour},   
	commentstyle=\color{codegreen},
	keywordstyle=\color{magenta},
	numberstyle=\footnotesize\color{codegray},
	stringstyle=\color{codepurple},
	basicstyle=\ttfamily\small,
	breakatwhitespace=false,         
	breaklines=true,                 
	captionpos=b,                    
	keepspaces=true,                 
	numbers=left,                    
	numbersep=5pt,                  
	showspaces=false,                
	showstringspaces=false,
	showtabs=false,                  
	tabsize=2
}
\definecolor{seagreen}{rgb}{0.18, 0.55, 0.34}
\definecolor{mediumviolet-red}{rgb}{0.78, 0.08, 0.52}
\definecolor{khaki}{rgb}{0.94, 0.9, 0.55}
\lstdefinelanguage{mypython}
{
	keywords=[1]{from, import, assert, not, print},
	keywordstyle=[1]{\color{mediumviolet-red}},
	keywords=[2]{surecr, torch, cp, lo, pl},
	keywordstyle=[2]{\color{seagreen}},
	numbers=none,
	upquote=true,
	showstringspaces=false,
	basicstyle=\ttfamily,
	columns=fullflexible,
	keepspaces=true,
	emph={True,False,as,def,return,float,class,match,switch,len},
	emphstyle={\color{seagreen}},
	frame=trBL,
	belowskip=1em,
	aboveskip=1em,
	captionpos=b
}
\crefname{equation}{}{}
\crefname{chapter}{Chapter}{Chapters}
\crefname{item}{item}{items}
\crefname{figure}{Figure}{Figures}
\crefname{theorem}{Theorem}{Theorems}
\crefname{lemma}{Lemma}{Lemmas}
\crefname{proposition}{Proposition}{Propositions}
\crefname{corollary}{Corollary}{Corollarys}
\crefname{definition}{Definition}{Definitions}
\crefname{assumption}{Assumption}{Assumptions}
\crefname{fact}{Fact}{Facts}
\crefname{example}{Example}{Examples}
\crefname{algorithm}{Algorithm}{Algorithms}
\crefname{remark}{Remark}{Remarks}
\crefname{note}{Note}{Notes}
\crefname{notation}{Notation}{Notations}
\crefname{case}{Case}{Cases}
\crefname{exercise}{Exercise}{Exercises}
\crefname{question}{Question}{Questions}
\crefname{claim}{Claim}{Claims}
\crefname{enumi}{}{}
\numberwithin{equation}{section}
\NewDocumentCommand{\lplabel}{o m}{%
	\makebox[0pt][r]{#2\hspace*{2em}}%
	\IfNoValueF{#1}
	{\def\@currentlabel{#2}\ltx@label{#1}}
}
\theoremstyle{plain}
\newtheorem{theorem}{Theorem}[section]
\newtheorem{corollary}{Corollary}[section]
\newtheorem{fact}{Fact}[section]
\newtheorem{lemma}{Lemma}[section]
\newtheorem{proposition}{Proposition}[section]
\theoremstyle{definition}
\newtheorem{assumption}{Assumption}[section]
\newtheorem{remark}{Remark}[section]
\newcommand{\minimize}{\ensuremath{\operatorname{minimize}}}
\newcommand{\maximize}{\ensuremath{\operatorname{maximize}}}
\newcommand{\weakly}{\ensuremath{{\;\operatorname{\rightharpoonup}\;}}}
\newcommand{\dom}{\ensuremath{\operatorname{dom}}}
\newcommand{\Id}{\ensuremath{\operatorname{Id}}}
\newcommand{\Pro}{\ensuremath{\operatorname{P}}}
\newcommand{\Prox}{\ensuremath{\operatorname{Prox}}}
\newcommand{\argmin}{\mathop{\rm argmin}}
\providecommand{\abs}[1]{\left|#1\right|}
\providecommand{\norm}[1]{\left\lVert#1\right\rVert}
\providecommand{\innp}[1]{\left\langle#1\right\rangle}
\begin{document}

\title{Alternating Proximity Mapping Method for Strongly Convex-Strongly Concave 
Saddle-Point Problems}

\author{
	 Hui Ouyang\thanks{Department of Electrical Engineering, Stanford University.
		E-mail: \href{mailto:houyang@stanford.edu}{\texttt{houyang@stanford.edu}}.}
}

\date{October 30, 2023}

\maketitle

\begin{abstract}
This is a continuation  of our previous work entitled
\enquote{Alternating Proximity Mapping Method for Convex-Concave Saddle-Point 
Problems}, in which we 
proposed the alternating proximal mapping method 
and showed convergence results on the sequence of our iterates,
the sequence of averages of our iterates, and the sequence of function values
evaluated at the averages of the iterates
for solving convex-concave saddle-point problems.

In this work, we extend the application of the alternating proximal mapping method
to solve strongly convex-strongly concave saddle-point problems.
We demonstrate two sets of sufficient conditions and also their simplified versions, 
which guarantee 
the linear convergence of the sequence of iterates towards a desired saddle-point.
Additionally, we provide two sets of sufficient conditions, 
along with their simplified versions,
that ensure the linear convergence of the sequence of function values evaluated at  
the convex combinations of iteration points to the desired function value of a 
saddle-point.
\end{abstract}

{\small
	\noindent
	{\bfseries 2020 Mathematics Subject Classification:}
	{
		Primary 90C25, 47J25;  
		Secondary 47H05,  90C30.
	}

\noindent{\bfseries Keywords:}
Convex-Concave Saddle-Point Problems, 
Proximity Mapping,  
Strongly Convexity, 
Linear Convergence
}


 \section{Introduction}
 In the whole work,  
 $\mathcal{H}_{1}$ and $\mathcal{H}_{2}$ are Hilbert spaces, and   
 the Hilbert direct sum $\mathcal{H}_{1} \times \mathcal{H}_{2}$ of  $\mathcal{H}_{1}$ 
 and $\mathcal{H}_{2}$ is equipped with the inner product 
\[ 
\left(\forall (x, y) \in \mathcal{H}_{1} \times \mathcal{H}_{2}\right) 
\left(\forall (u,v) \in \mathcal{H}_{1} \times \mathcal{H}_{2}\right)
 \quad \innp{(x,y), (u,v)} = 
\innp{x,u} +\innp{y,v}
\]
 and the induced norm
\[ 
\left(\forall (x, y) \in \mathcal{H}_{1} \times \mathcal{H}_{2}\right) 
 \quad 
 \norm{(x, y) }^{2} =\innp{(x,y), (x,y)}=\innp{x,x}  +\innp{y,y} 
 =\norm{x}^{2} + \norm{y}^{2}. 
\]
Throughout this work,  
$K \in \mathcal{B}(\mathcal{H}_{1} , \mathcal{H}_{2} )$, i.e., $K : \mathcal{H}_{1} \to 
\mathcal{H}_{2}$ is a continuous linear operator with the operator norm
\begin{align} \label{eq:definenormK}
	\norm{K} := \sup \{ \norm{Kx} ~:~ x \in \mathcal{H}_{1} \text{ with } \norm{x} \leq 1 \},
\end{align}
and $g: \mathcal{H}_{1} \to \mathbf{R} \cup \{ +\infty\}$ and 
$h: \mathcal{H}_{2} \to  \mathbf{R} \cup \{ +\infty\}$ are proper and lower 
semicontinuous, and also satisfy the following  \cref{assumption:basic}.
\begin{assumption} \label{assumption:basic}
	Let $\mu$ and $\nu$ be in $\mathbf{R}_{++}$.
	\begin{itemize}
		\item $g$ is strongly convex with a modulus $\mu >0$, i.e., 
		\begin{align}\label{eq:gmu}
			(\forall \bar{x} \in \dom \partial g)(\forall u \in \partial g(\bar{x})) (\forall x \in 
			\mathcal{H}_{1}) \quad \innp{u, x-\bar{x}} + \frac{\mu}{2} \norm{x -\bar{x}}^{2}
			+g(\bar{x}) \leq g(x).
		\end{align}
		\item $h$ is strongly convex with a modulus $\nu >0$, i.e., 
		\begin{align}\label{eq:hnu}
			(\forall \bar{y} \in \dom \partial h)(\forall v \in \partial h(\bar{y})) (\forall y \in 
			\mathcal{H}_{2}) \quad \innp{v, y-\bar{y}} + \frac{\nu}{2} \norm{y -\bar{y}}^{2}
			+h(\bar{y}) \leq h(y).
		\end{align}
	\end{itemize}
\end{assumption}
Henceforth, unless stated otherwise,
$f : \mathcal{H}_{1} \times \mathcal{H}_{2} \to \mathbf{R} \cup \{ - \infty, + \infty \}$
is defined as 
\begin{align}  \label{eq:fspecialdefine}
	\left(\forall (x,y) \in \mathcal{H}_{1} \times \mathcal{H}_{2}\right) \quad 	f(x,y) 
	=\innp{Kx, y} + g(x)  -h(y).
\end{align}

Our goal in this work is to solve the following \emph{strongly convex-strongly concave 
saddle-point problem}
\begin{align}\label{eq:problem}
\underset{x \in  \mathcal{H}_{1} }{\minimize}
~\,\underset{y \in  \mathcal{H}_{2} }{\maximize}  ~f(x,y).
\end{align}
We assume  that  the solution set of \cref{eq:problem} is nonempty, 
 that is, there exists a \emph{saddle-point} $(x^{*}, y^{*}) \in \mathcal{H}_{1} \times 
 \mathcal{H}_{2}$ of $f$
 satisfying 
 \begin{align}\label{eq:solution}
 \left(\forall x \in \mathcal{H}_{1}\right) \left(\forall y \in \mathcal{H}_{2}\right) \quad 
 f(x^{*}, y)  \leq  f(x^{*}, y^{*})  \leq f(x, y^{*}).
 \end{align}
Our tool in this work is the alternating proximal mapping method proposed 
in \cite{Oy2023Proximity}, which will be presented in  
\cref{eq:fact:algorithmsymplity} below. 
To solve the problem \cref{eq:problem},
we study the linear convergence of both the sequence 
$\left((x^{k},y^{k})\right)_{k \in \mathbf{N}}$ and 
$\left(f\left( \hat{x}_{k},\hat{y}_{k} \right)\right)_{k \in \mathbf{N}}$ 
to $(x^{*},y^{*})$ and $f(x^{*},y^{*})$, respectively, 
where $((x^{k},y^{k}))_{k \in \mathbf{N}}$ is generated by 
the alternating proximal mapping method, 
and 
$(\forall k \in \mathbf{N})$ 
 $\hat{x}_{k} = \frac{1}{\sum^{k}_{j=0}\frac{1}{\xi^{j}}} \sum^{k}_{i=0} \frac{1}{\xi^{i}} 
x^{i+1}$    
and 
$\hat{y}_{k} = \frac{1}{\sum^{k}_{j=0}\frac{1}{\xi^{j}}}  \sum^{k}_{i=0} \frac{1}{\xi^{i}} 
y^{i+1}$, with $\xi \in \mathbf{R}_{++}$.
Below, we present our main results.
\begin{enumerate}
	\item 
	We provide sufficient conditions for the linear convergence 
	$\left((x^{k},y^{k})\right)_{k \in \mathbf{N}}$ to $(x^{*},y^{*})$
	in \Cref{theorem:linearconverg:K,theorem:linearconverg:Ksquare}. 
	Furthermore, given known parameters $\mu >0$, $\nu >0$, and 
	$\norm{K} \geq 0$ from a specific problem, we consider the involved parameters 
	$(\tau_{k})_{k \in \mathbf{N}}$,  $(\sigma_{k})_{k \in \mathbf{N}}$,  $(\alpha_{k})_{k 
	\in \mathbf{N}}$, and $(\beta_{k})_{k \in \mathbf{N}}$ 
in the iterate scheme \cref{eq:fact:algorithmsymplity} 
being constants
$\tau$, $\sigma$, $\alpha$, and $\beta$, respectively, 
	and show clear and simplified sufficient conditions on the choices of  
	the involved parameters $\tau$, $\sigma$, $\alpha$, and $\beta$
	for the linear convergence of
	$\left((x^{k},y^{k})\right)_{k \in \mathbf{N}}$ to $(x^{*},y^{*})$ in 
	\Cref{corollary:linearconverg:K,corollary:linearconverg:Ksquare}.
	
	\item  We present two set of sufficient conditions for the linear convergence of 
	$\left(f\left( \hat{x}_{k},\hat{y}_{k} \right)\right)_{k \in \mathbf{N}}$ 
	to $f(x^{*},y^{*})$ in
	\cref{theorem:linearconvergeineq}.
	Moreover, after knowing the parameters $\mu >0$, $\nu >0$, and 
	$\norm{K} \geq 0$ from a specific problem, under two set of restrictions 
	on the parameters 
	$(\forall k \in \mathbf{N})$ $\tau_{k} \equiv \tau$, $\sigma_{k} \equiv \sigma$, 
	$\alpha_{k} \equiv \alpha$, and $\beta_{k} \equiv \beta$ 
	involved in the iterate scheme \cref{eq:fact:algorithmsymplity}, 
	 we deduce the linear 
	convergence of $\left(f\left( \hat{x}_{k},\hat{y}_{k} \right)\right)_{k \in \mathbf{N}}$ 
	to $f(x^{*},y^{*})$  in
	\cref{corollary:linearconvergeineq}.
\end{enumerate}
In fact, based on our \cref{remark:ineqaulities} below, 
by using almost the same techniques applied in the proofs of 
\Cref{theorem:linearconverg:K,theorem:linearconverg:Ksquare,theorem:linearconvergeineq},
and employing the inequalities presented in \cref{remark:ineqaulities},
we can provide many other sufficient conditions for the linear convergence of
$\left((x^{k},y^{k})\right)_{k \in \mathbf{N}}$ to $(x^{*},y^{*})$,
and also the linear convergence of 
 $\left(f\left( \hat{x}_{k},\hat{y}_{k} \right)\right)_{k \in \mathbf{N}}$ 
 to $f(x^{*},y^{*})$.

\subsection{Related work}
This work is a subsequent work of \cite{Oy2023Proximity}, in which we proposed 
the  alternating proximal mapping method 
and  studied the (weak) convergence 
of the sequence of iterations
$\left((x^{k},y^{k})\right)_{k \in \mathbf{N}}$,
$\left( \left( \hat{x}_{k},\hat{y}_{k} \right)  \right)_{k \in \mathbf{N}}$, and 
$\left(f\left( \hat{x}_{k},\hat{y}_{k} \right)\right)_{k \in \mathbf{N}}$,
where   $\left((x^{k},y^{k})\right)_{k \in \mathbf{N}}$ is generated by 
the alternating proximal mapping method 
and $(\forall k \in \mathbf{N})$ $\hat{x}_{k} = \frac{1}{k+1} \sum^{k}_{i=0}  x^{i+1}$    
 and 
 $\hat{y}_{k} = \frac{1}{k+1}  \sum^{k}_{i=0}  y^{i+1}$.
In particular, inspired by \cite[Section~3]{ChambollePock2011} by Chambolle and Pock, 
we worked theoretically and numerically on 
the  alternating proximal mapping method,  and showed the convergence results 
on  $\left((x^{k},y^{k})\right)_{k \in \mathbf{N}} \to (x^{*},y^{*})$,
$\left( \left( \hat{x}_{k},\hat{y}_{k} \right)  \right)_{k \in \mathbf{N}} \weakly 
(x^{*},y^{*})$, and 
$\left(f\left( \hat{x}_{k},\hat{y}_{k} \right)\right)_{k \in \mathbf{N}} \to f(x^{*},y^{*})$ 
without any strong convexity/concavity assumption in \cite{Oy2023Proximity}.

Although in \cite[Section~5]{ChambollePock2011}, Chambolle and Pock
considered the 
acceleration of the first-order  primal-dual algorithm worked therein 
under some strongly convexity assumptions, 
and showed a sufficient condition for the linear convergence of the sequence of 
iterations generated 
by their algorithm. 
In this work, we not only explicitly present two set of sufficient conditions for the linear 
convergence of 
the sequence   $\left((x^{k},y^{k})\right)_{k \in \mathbf{N}}$ generated by our iterate 
scheme \cref{eq:fact:algorithmsymplity}
to $(x^{*},y^{*})$, 
but also 
provide two set of sufficient conditions for the linear convergence of the sequence 
$\left(f\left( \hat{x}_{k},\hat{y}_{k} \right)\right)_{k \in \mathbf{N}}$ 
to $f(x^{*},y^{*})$, where $(\forall k \in \mathbf{N})$ 
$\hat{x}_{k} = \frac{1}{\sum^{k}_{j=0}\frac{1}{\xi^{j}}} \sum^{k}_{i=0} \frac{1}{\xi^{i}} 
x^{i+1}$    
and 
$\hat{y}_{k} = \frac{1}{\sum^{k}_{j=0}\frac{1}{\xi^{j}}}  \sum^{k}_{i=0} \frac{1}{\xi^{i}} 
y^{i+1}$, with $\xi \in \mathbf{R}_{++}$.

In \cite{Oy2023ccspp}, we proposed  the alternating proximal point 
algorithm with gradient descent and ascent steps 
 for solving the convex-concave saddle-point problem \cref{eq:problem}
with $(\forall (x,y) \in \mathcal{H}_{1} \times \mathcal{H}_{2} )$ 
$f(x,y) =f_{1}(x) +\Phi (x,y) -f_{2}(y)$, 
where $f_{1}: \mathcal{H}_{1} \to (-\infty, +\infty]$ 
and $f_{2} : \mathcal{H}_{2} \to (-\infty, +\infty]$ 
are (strongly) convex and lower semicontinuous,  
$(\forall y \in \dom f_{2})$ $\Phi(\cdot, y) $ is convex and Fr\'echet 
differentiable, and $(\forall x \in \dom f_{1})$ $\Phi(x,\cdot) $ is concave and  Fr\'echet 
differentiable.
Although, similarly with this work, we presented weak and linearly convergence of 
the sequence of iterations, and  showed
the convergence and linearly convergence of function values evaluated at  convex 
combinations of iteration points
under convex and strongly convex assumptions, respectively,
in \cite{Oy2023ccspp}.
Note that the convex-concave saddle-point problem considered in this work is 
only a special case of the one worked in \cite{Oy2023ccspp}, and that the 
alternating proximal point algorithm with gradient descent and ascent steps worked in
\cite{Oy2023ccspp} is different from the aternating proximity mapping method
 considered in this work. 
Therefore, although goals and results in this work and \cite{Oy2023ccspp} are similar, 
assumptions, proof techniques,  and application areas are different. 

In  \cite{Oy2023fccspp}, we worked on  
the Optimistic Gradient Ascent-Proximal Point Algorithm (OGAProx)
introduced  by Bo{\c{t}}, Csetnek, and Sedlmayer
in the article \cite{BoctCsetnekSedlmayer2022accelerated}
 for solving the problem \cref{eq:problem}
with $\left(\forall (x,y)  \in \mathcal{H}_{1} \times \mathcal{H}_{2} \right)$
$f(x,y) :=\Phi(x,y) -g(y)$, where 
$(\forall y \in \dom g)$ $\Phi(\cdot, y) : \mathcal{H}_{1} \to \mathbf{R} \cup 
\{+\infty\}$ is proper, convex, and lower semicontinuous,
$\left( \forall x \in \Pro_{\mathcal{H}_{1}} (\dom \Phi) \right)$ 
$\Phi(x, \cdot) : \mathcal{H}_{2} \to \mathbf{R}$ is convex and Fr\'echet 
differentiable with 
$ \Pro_{\mathcal{H}_{1}} (\dom \Phi):= \{ u \in \mathcal{H}_{1} ~:~ \exists y \in 
\mathcal{H}_{2} \text{ such that } (u,y) \in \dom \Phi\}$,
and $g : \mathcal{H}_{2} \to (-\infty, +\infty]$ is proper, lower semicontinuous, and 
(strongly) convex. 
The authors in  \cite{BoctCsetnekSedlmayer2022accelerated} provided the
 (weak/strong/linear) convergence of the sequence of iterations generated by the  
 OGAProx.  
 Under the same assumptions used by Bo{\c{t}} et al.\,in 
 \cite{BoctCsetnekSedlmayer2022accelerated} for proving the
 convergence of the minimax gap function,
 we established in \cite{Oy2023fccspp} convergence results 
of function values evaluated at the ergodic sequences generated by the OGAProx
 with convergence rates of order 
 $\mathcal{O}\left(\frac{1}{k}\right)$, $\mathcal{O}\left(\frac{1}{k^{2}}\right)$,
 and $\mathcal{O}\left(\theta^{k}\right)$ with $\theta \in (0,1)$ 
when the associated convex-concave coupling function $\Phi$ is convex-concave, 
 convex-strongly concave, and strongly convex-strongly concave, respectively. 
Clearly, the problems and algorithms studied in this work are different from those 
considered in \cite{BoctCsetnekSedlmayer2022accelerated} and \cite{Oy2023fccspp}.

\subsection{Notation} 
We  point out some notation used in this work.  
$\mathbf{R}$, $\mathbf{R}_{+}$, $\mathbf{R}_{++}$, and $\mathbf{N}$  are the set of 
all 
real numbers, the set of all nonnegative real numbers, the set of all positive real 
numbers, 
and the set of all nonnegative integers, respectively.  
$\mathcal{B}(\mathcal{H}_{1} , \mathcal{H}_{2} ) 
:= \{ T: \mathcal{H}_{1} \to 
\mathcal{H}_{2} ~:~ T \text{ is linear and continuous} \}$
is the set of all linear and continuous operators from $\mathcal{H}_{1}$ to  
$\mathcal{H}_{2}$.
Let $T \in \mathcal{B}(\mathcal{H}_{1} , \mathcal{H}_{2} ) $. The \emph{adjoint}
of $T$ is the unique operator $T^{*} \in \mathcal{B}(\mathcal{H}_{2} , \mathcal{H}_{1} )$
that satisfies 
\begin{align*}
	(\forall x \in \mathcal{H}_{1})(\forall y \in \mathcal{H}_{2}) \quad
	\innp{Tx,y} =\innp{x, T^{*}y}.
\end{align*}

Let $\mathcal{H}$ be a Hilbert space. 
$\Id :\mathcal{H}\to \mathcal{H}: x \mapsto x$ is the \emph{identity operator}.
$2^{\mathcal{H}}$ is the \emph{power set of $\mathcal{H}$}, i.e., 
the family of all subsets of $\mathcal{H}$.
Let $g: \mathcal{H} \to \left(  -\infty, 
+\infty\right]$ be proper. The \emph{subdifferential of $g$} is the set-valued operator 
\[  
\partial g: \mathcal{H} \to 2^{\mathcal{H}}: x \mapsto \{ u \in \mathcal{H} 
~:~ (\forall y \in \mathcal{H})~ \innp{u, y-x} + f(x) \leq f(y) \}.
\]
Suppose that $g$ is proper, convex, and lower semicontinuous. 
The \emph{proximity mapping $\Prox_{g}$ of $g$} is defined by
\[ 
\Prox_{g}: \mathcal{H} \to \mathcal{H} : x \mapsto \argmin_{y \in \mathcal{H}} \left(  g(y) 
+\frac{1}{2} \norm{x-y}^{2} \right).
\]

\subsection{Outline}
In \cref{section:Preliminaries}, we collect results that will simplify our proofs of
main results in the following sections.
Moreover, in this section, we also provide analysis for much more sufficient conditions 
of our linear convergence results  given in both  
\Cref{section:linearconvergeIterations,section:linearconvergf} except for the conditions
written clearly in \Cref{section:linearconvergeIterations,section:linearconvergf} 
(for details see \cref{remark:ineqaulities}). 
In \cref{section:linearconvergeIterations}, we present two set of sufficient conditions 
for the sequence of iterations generated by the  alternating proximal mapping method 
for converging linearly to the desired saddle-point. 
Based on these results, we provide simplified and clearly practical assumptions for the 
linear convergence of the sequence of iterations. (See  
\Cref{theorem:linearconverg:K,theorem:linearconverg:Ksquare}, and 
\Cref{corollary:linearconverg:K,corollary:linearconverg:Ksquare}
for details.)
In \cref{section:linearconvergf}, we  demonstrate two set of sufficient conditions for 
the linear convergence of  function values evaluated at  convex 
combinations of iterations generated by the alternating proximal mapping method.
According to these results, simplified and clearly practical versions of 
assumptions on the involved parameters in constants are established. (See
\Cref{theorem:linearconvergeineq,corollary:linearconvergeineq} for details.)

\section{Preliminaries} \label{section:Preliminaries}

We first show some basic results. 
\begin{lemma}\label{lemma:partialstronglyconvex}
	Let $(x^{*},y^{*})$ be a saddle-point of $f$ defined in \cref{eq:fspecialdefine}. We 
	have the following assertions.
	\begin{enumerate}
		\item \label{lemma:partialstronglyconvex:K}
		$-K^{*}y^{*} \in \partial g(x^{*})$ and $Kx^{*} \in \partial h (y^{*})$.
		\item \label{lemma:partialstronglyconvex:g}
		$(\forall x \in \mathcal{H}_{1})$ 
		$\innp{-K^{*}y^{*}, x-x^{*}} + \frac{\mu}{2} \norm{x -x^{*}}^{2}+g(x^{*}) \leq g(x)$.
		\item \label{lemma:partialstronglyconvex:h}
		$(\forall y \in \mathcal{H}_{2})$ 
		$ \innp{Kx^{*}, y-y^{*}} + \frac{\nu}{2} \norm{y -y^{*}}^{2}+h(y^{*}) \leq h(y)$.
		\item \label{lemma:partialstronglyconvex:f}
		For every $x \in \mathcal{H}_{1}$ and for every $y \in \mathcal{H}_{2}$,
		\begin{align*}
			f(x,y^{*}) -f(x^{*},y) 
			&= \innp{Kx, y^{*}} + g(x)  -h(y^{*}) - \left(\innp{Kx^{*}, y} + g(x^{*})  
			-h(y)\right)\\
			&\geq \frac{\mu}{2} \norm{x -x^{*}}^{2} + \frac{\nu}{2} \norm{y -y^{*}}^{2}.
		\end{align*}
	\end{enumerate}
\end{lemma}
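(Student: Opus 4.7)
The plan is to handle the four assertions in order, with assertion (i) doing essentially all of the work and the other three following by direct computation.

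For \cref{lemma:partialstronglyconvex:K}, I would exploit the saddle-point inequality \cref{eq:solution}. Since $f(x^{*},y^{*}) \leq f(x,y^{*})$ for every $x$, the point $x^{*}$ is a global minimizer of the proper, lower semicontinuous, convex function $x \mapsto \innp{Kx,y^{*}} + g(x)$ (the constant $-h(y^{*})$ is harmless once finite; it is finite because otherwise the saddle-point inequality would be vacuous). Hence $0$ lies in the subdifferential at $x^{*}$, and since $x \mapsto \innp{Kx,y^{*}} = \innp{x,K^{*}y^{*}}$ is Fréchet differentiable with gradient $K^{*}y^{*}$, the sum rule gives $0 \in K^{*}y^{*} + \partial g(x^{*})$, i.e., $-K^{*}y^{*} \in \partial g(x^{*})$. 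Symmetrically, $f(x^{*},y) \leq f(x^{*},y^{*})$ means $y^{*}$ minimizes $y \mapsto h(y) - \innp{Kx^{*},y}$, whence $Kx^{*} \in \partial h(y^{*})$.

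Assertions \cref{lemma:partialstronglyconvex:g} and \cref{lemma:partialstronglyconvex:h} are then immediate: plug $\bar{x}=x^{*}$, $u=-K^{*}y^{*}$ into the strong convexity inequality \cref{eq:gmu} from \cref{assumption:basic}, and $\bar{y}=y^{*}$, $v=Kx^{*}$ into \cref{eq:hnu}.

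For \cref{lemma:partialstronglyconvex:f}, I would expand
\[
f(x,y^{*}) - f(x^{*},y) = \bigl[g(x) - g(x^{*})\bigr] + \bigl[h(y) - h(y^{*})\bigr] + \innp{Kx,y^{*}} - \innp{Kx^{*},y},
\]
and then substitute the lower bounds from \cref{lemma:partialstronglyconvex:g} and \cref{lemma:partialstronglyconvex:h}, which contribute the two quadratic terms $\frac{\mu}{2}\norm{x-x^{*}}^{2}$ and $\frac{\nu}{2}\norm{y-y^{*}}^{2}$ together with the linear parts $-\innp{K^{*}y^{*},x-x^{*}}$ and $\innp{Kx^{*},y-y^{*}}$. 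The main step to verify—though entirely routine—is that when one combines these linear parts with $\innp{Kx,y^{*}} - \innp{Kx^{*},y}$ and uses $\innp{K^{*}y^{*},x} = \innp{y^{*},Kx}$, every cross-term cancels and the remainder is exactly the claimed quadratic lower bound. There is no real obstacle here; the only point requiring care is tracking signs when distributing the adjoint, which is why I split the bookkeeping off as its own step.
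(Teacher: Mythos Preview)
Your proposal is correct and follows essentially the same approach as the paper: derive the subdifferential inclusions from optimality of $x^{*}$ and $y^{*}$, feed them into the strong convexity inequalities \cref{eq:gmu} and \cref{eq:hnu}, and add the results to obtain \cref{lemma:partialstronglyconvex:f}. The only cosmetic difference is that the paper cites an external fact for the first-order conditions in \cref{lemma:partialstronglyconvex:K}, whereas you argue directly from the saddle-point inequality and the sum rule; your version is slightly more self-contained but otherwise identical in substance.
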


\begin{proof}
	\cref{lemma:partialstronglyconvex:K}:
	Because $(x^{*},y^{*})$ is a saddle-point, 
	applying \cite[Fact~1.1]{Oy2023subgradient} 
	and bearing \cref{eq:fspecialdefine} in mind, we observe that
	\begin{align*}
		0 \in \partial_{x}f(x^{*},y^{*})=K^{*}y^{*} +\partial g(x^{*}) \quad \text{and} \quad
		0 \in \partial_{y}\left(-f(x^{*},y^{*}) \right)=- Kx^{*}+\partial h(y^{*}),
	\end{align*}
	which immediately implies our required result. 
	
	\cref{lemma:partialstronglyconvex:g}$\&$\cref{lemma:partialstronglyconvex:h}:
	The desired results are clear from 
	our assumptions stated in \cref{eq:gmu} and \cref{eq:hnu},
	and our result obtained in \cref{lemma:partialstronglyconvex:K}.
	
	\cref{lemma:partialstronglyconvex:f}: 
	Let $(x,y)$ be in  $\mathcal{H}_{1} \times \mathcal{H}_{2}$.
	Add the two inequalities presented in 
	\cref{lemma:partialstronglyconvex:g} and \cref{lemma:partialstronglyconvex:h} to
	deduce that
	\begin{align*}
		&\frac{\mu}{2} \norm{x -x^{*}}^{2} + \frac{\nu}{2} \norm{y -y^{*}}^{2}\\
		\leq & g(x) - g(x^{*}) + \innp{K^{*}y^{*}, x-x^{*}}  
		+h(y) -h(y^{*}) -\innp{Kx^{*}, y-y^{*}}  \\
		= &g(x) - g(x^{*}) + h(y) -h(y^{*}) + \innp{y^{*},Kx} -\innp{y^{*},Kx^{*}}
		-\innp{Kx^{*}, y} +\innp{Kx^{*}, y^{*}}\\
		=& \innp{Kx, y^{*}} + g(x)  -h(y^{*}) - \left(\innp{Kx^{*}, y} + g(x^{*})  -h(y)\right)\\
		=& f(x,y^{*}) -f(x^{*},y).
	\end{align*}
	where in the last equality we use \cref{eq:fspecialdefine}.
	
\end{proof}


\begin{fact} \label{fact:fxkykx*y*}
	{\rm  \cite[Lemma~2.5]{Oy2023ccspp}}
	Let $f: \mathcal{H}_{1} \times \mathcal{H}_{2} \to \mathbf{R} \cup \{-\infty, +\infty\}$ 
	satisfy that $(\forall y \in \mathcal{H}_{2} )$ $f(\cdot, y)$ is convex 
	and $(\forall x \in \mathcal{H}_{1})$ $f(x,\cdot)$ is concave. 
	Let $(x^{*},y^{*})$ be a saddle-point of $f$, let $((x^{k},y^{k}))_{k \in \mathbf{N}}$ 
	be in $\mathcal{H}_{1}\times \mathcal{H}_{2}$, and let $(\forall k \in 
	\mathbf{N})$ $t_{k} \in \mathbf{R}_{+}$ with $t_{0} \in \mathbf{R}_{++}$.
	Set 
	\begin{align*} 
		(\forall k \in \mathbf{N}) \quad 
		\hat{x}_{k} := \frac{1}{\sum^{k}_{i=0}t_{i}} 
		\sum^{k}_{j=0}t_{j}x^{j+1} 
		\quad \text{and} \quad 
		\hat{y}_{k} := \frac{1}{\sum^{k}_{i=0}t_{i}} 
		\sum^{k}_{j=0}t_{j}y^{j+1}.
	\end{align*}
	Then we have that for every $k \in \mathbf{N}$,
	\begin{subequations}
		\begin{align*}
			&f(\hat{x}_{k},\hat{y}_{k}) -f(x^{*},y^{*}) 
			\leq \frac{1}{\sum^{k}_{i=0}t_{i}} \sum^{k}_{j=0}t_{j} \left( f(x^{j+1}, \hat{y}_{k})  
			-f(x^{*},y^{j+1}) \right);\\
			&f(x^{*},y^{*}) -f(\hat{x}_{k},\hat{y}_{k})
			\leq \frac{1}{\sum^{k}_{i=0}t_{i}} \sum^{k}_{j=0}t_{j} \left( f(x^{j+1},y^{*})  
			-f(\hat{x}_{k},y^{j+1}) \right).
		\end{align*}
	\end{subequations}
\end{fact}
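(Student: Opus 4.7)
The plan is to prove both inequalities by combining Jensen's inequality (via the convexity/concavity of $f$ in each argument separately) with the two halves of the saddle-point property $f(x^{*},y) \leq f(x^{*},y^{*}) \leq f(x,y^{*})$. The hypothesis $t_{0} \in \mathbf{R}_{++}$ together with $(\forall k) \ t_{k} \in \mathbf{R}_{+}$ ensures that $\sum_{i=0}^{k} t_{i} > 0$, so $\hat{x}_{k}$ and $\hat{y}_{k}$ are genuine convex combinations of the iterates.

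For the first inequality, I would first apply convexity of $f(\cdot,\hat{y}_{k})$ to obtain
\[
f(\hat{x}_{k},\hat{y}_{k}) \;\leq\; \frac{1}{\sum_{i=0}^{k}t_{i}} \sum_{j=0}^{k} t_{j}\, f(x^{j+1},\hat{y}_{k}).
\]
Then I subtract $f(x^{*},y^{*})$ from both sides and rewrite $f(x^{*},y^{*}) = \frac{1}{\sum_{i=0}^{k}t_{i}}\sum_{j=0}^{k}t_{j}\, f(x^{*},y^{*})$ so that the right-hand side becomes a single weighted sum. Finally, the saddle-point inequality $f(x^{*},y^{j+1}) \leq f(x^{*},y^{*})$ for each $j$ yields $-f(x^{*},y^{*}) \leq -f(x^{*},y^{j+1})$, which upgrades the bound to the claimed form.

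The second inequality is symmetric. I would use concavity of $f(\hat{x}_{k},\cdot)$ in the form
\[
-f(\hat{x}_{k},\hat{y}_{k}) \;\leq\; -\frac{1}{\sum_{i=0}^{k}t_{i}} \sum_{j=0}^{k} t_{j}\, f(\hat{x}_{k},y^{j+1}),
\]
then add $f(x^{*},y^{*})$, absorb it into the weighted sum, and apply the other half of the saddle-point inequality $f(x^{*},y^{*}) \leq f(x^{j+1},y^{*})$ termwise.

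I do not expect any significant obstacle; the only thing to watch is that the weights are strictly positive (so the division is legal) and that the two saddle-point inequalities are applied in the correct direction for each case. The structure is essentially a textbook ergodic/averaging argument, and since the result is cited from \cite{Oy2023ccspp}, no new ideas are needed beyond careful bookkeeping of the Jensen step and the saddle-point step.
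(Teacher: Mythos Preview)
Your proposal is correct; this is exactly the standard Jensen-plus-saddle-point argument one would expect. Note that the paper itself does not give a proof of this statement: it is stated as a Fact with a citation to \cite[Lemma~2.5]{Oy2023ccspp}, so there is no in-paper proof to compare against, but your argument is the natural one and matches what the cited lemma would contain.
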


\subsection{Alternating Proximal Mapping Method }
From now on, we assume that 
\[ 
(\tau_{k})_{k \in \mathbf{N} \cup \{-1\}} \text{ and } 
(\sigma_{k})_{k \in \mathbf{N} \cup \{-1\}} 
\text{	are in } \mathbf{R}_{++}, \text{and } 
(\alpha_{k})_{k \in \mathbf{N} \cup \{-1\}} \text{ and } 
(\beta_{k})_{k \in \mathbf{N} \cup \{-1\}}
\text{ are in } \mathbf{R}_{+}.
\]

Let $(x^{0}, y^{0}) \in X \times Y$. 
Set $\bar{x}^{0}=x^{0}$ and $\bar{y}^{0} =y^{0}$ 
$($i.e., $x^{-1} =x^{0}$ and $y^{-1} =y^{0}$$)$.  
Recall from \cite[Lemma~4.1]{Oy2023Proximity} that 
the \emph{alternating proximal mapping method for solving the problem} 
\cref{eq:problem}
with $f$  defined as \cref{eq:fspecialdefine} is the following: 
for every $k \in \mathbf{N}$, 
\begin{subequations}\label{eq:fact:algorithmsymplity}
	\begin{align}
		&y^{k+1} =  \Prox_{ \sigma_{k} h} (\sigma_{k} K\bar{x}^{k}  +  y^{k} )
		=(\Id + \sigma_{k} \partial  h )^{-1} (\sigma_{k} K\bar{x}^{k}  +  y^{k} ); 
		\label{eq:fact:algorithmsymplity:yk}\\
		& \bar{y}^{k+1} = y^{k+1} + \beta_{k} (y^{k+1} -y^{k}); 
		\label{eq:fact:algorithmsymplity:bary}\\
		&x^{k+1} = \Prox_{\tau_{k}  g} ( -\tau_{k} K^{*} \bar{y}^{k+1}+ x^{k} ) 
		=(\Id + \tau_{k}  \partial  g  )^{-1} ( -\tau_{k} K^{*} \bar{y}^{k+1}+ x^{k} ); 
		\label{eq:fact:algorithmsymplity:xk}\\
		& \bar{x}^{k+1} = x^{k+1} + \alpha_{k} (x^{k+1} -x^{k}). 
		\label{eq:fact:algorithmsymplity:barx} 
	\end{align}
\end{subequations}

In the rest of this work, 
$\left(\left(x^{k},y^{k}\right)\right)_{k \in \mathbf{N}}$ is generated by the
iterate scheme \cref{eq:fact:algorithmsymplity}.

The following result will facilitate our proofs in some main results, 
\Cref{corollary:linearconverg:K,corollary:linearconverg:Ksquare,corollary:linearconvergeineq},
 in this work below.
\begin{lemma} \label{lemma:infsupconstants}
	Recall that $\mu>0$, $\nu>0$, $\norm{K} \geq 0$ are known from 
	\cref{assumption:basic} and the function $f$ defined in \cref{eq:fspecialdefine}. 		
	Suppose that 
	\begin{align}  \label{eq:lemma:infsupconstants:constant}
		(\forall k \in \mathbf{N} \cup \{-1\}) \quad \tau_{k} \equiv \tau \in \mathbf{R}_{++}, 
		\sigma_{k} \equiv  \sigma \in \mathbf{R}_{++}, 
		\alpha_{k} \equiv \alpha \in \mathbf{R}_{++}, \text{ and } 
		\beta_{k} \equiv \beta \in \mathbf{R}_{+}.
	\end{align}
		Let $\zeta $ be in $\mathbf{R}_{++}$.
		The following statements hold. 
	
	\begin{enumerate}
		\item \label{lemma:infsupconstants:munu} There exist $\tau >0$ and $\sigma >0$   
		small enough such that 
		$\norm{K}^{2} <\frac{\zeta \mu +\frac{1}{\tau}}{\sigma}$ and 
		$\norm{K}^{2} < \frac{\zeta \nu +\frac{1}{\sigma}}{\tau}$. 
		\item  \label{lemma:infsupconstants:alpha} Suppose $\norm{K}^{2} <\frac{\zeta \mu 
		+\frac{1}{\tau}}{\sigma}$ and 
		$\norm{K}^{2} < \frac{\zeta \nu +\frac{1}{\sigma}}{\tau}$. Then
		 there exists $\alpha$ be in $\left( \frac{1}{ \zeta \mu \tau +1} , 1\right) \subseteq 
		(0,1) $ such that $\alpha \geq 
		\frac{1}{\zeta  \nu \sigma +1}$, and $\alpha \tau \sigma \norm{K}^{2} <1$.
		Consequently, in this case, we have that $\alpha \sup_{k \in \mathbf{N}} \tau_{k} 
		\sup_{k \in 
		\mathbf{N}} \sigma_{k} \norm{K}^{2}< 1$.
	
		\item \label{lemma:infsupconstants:beta} 
		Suppose that $\alpha > \frac{1}{ \zeta \mu \tau +1} $  and $\alpha \tau 
		\sigma \norm{K}^{2} <1$. 
		Then there exists $\beta \in \mathbf{R}_{+}$  
		such that $\sigma \norm{K}^{2}\beta^{2} < \left(\zeta \mu +\frac{1}{\tau} 
		-\frac{1}{\alpha \tau}\right) \left(1-\alpha \tau \sigma \norm{K}^{2}\right)$.
		
		\item  \label{lemma:infsupconstants:nu}  
		Let $\eta_{2}$ be in $\mathbf{R}_{++}$.
		Suppose that 
		$\alpha \geq \frac{1}{\zeta  \nu \sigma +1}$. Then $(\forall i \in \{1,2\})$
		$
		\zeta \nu \geq \frac{1}{\alpha \inf_{k \in 
				\mathbf{N}}\sigma_{k}} -\frac{1}{\sup_{k \in \mathbf{N}}\sigma_{k}}  
		+ \eta_{2}\norm{K}^{i} \sup_{k \in \mathbf{N}}(\alpha -\alpha_{k})^{2}
		$
	 
			\item \label{lemma:infsupconstants:eta4}
			Suppose $\alpha \tau \sigma \norm{K}^{2} <1$. Then
		there exists $\eta_{4} \in \mathbf{R}_{++}$ such that 
		$\eta_{4} \geq \tau$ and $\alpha \sigma \eta_{4} \norm{K}^{2} < 1$. Moreover, 
		in this case, $\eta_{4} \geq \sup_{i \in \mathbf{N}} \tau_{i}$  
		and $\frac{1}{\sup_{i \in \mathbf{N}}\sigma_{i}} -\alpha \eta_{4} \norm{K}^{2} >0$.
		
		\item \label{lemma:infsupconstants:eta3} Suppose that   $\alpha \tau \sigma 
		\norm{K}^{2} <1$ and $\sigma 
		\norm{K}^{2}\beta^{2} < \left(\zeta \mu +\frac{1}{\tau} 
		-\frac{1}{\alpha \tau}\right) \left(1-\alpha \tau \sigma \norm{K}^{2}\right)$. 
		The following assertions hold. 
		\begin{enumerate}
			\item \label{lemma:infsupconstants:eta3:a} There exists $\eta_{3} \in 
			\mathbf{R}_{++}$ such that 
			$\eta_{3} > \frac{\sigma \norm{K}}{ 1 -\alpha \tau \sigma \norm{K}^{2}}$ and
			$\eta_{3}\norm{K} \beta^{2} <\zeta \mu +\frac{1}{\tau} 
			-\frac{1}{\alpha \tau}$. 
			
			Consequently, we have that $
			\zeta \mu\geq \frac{1}{\alpha \inf_{k \in \mathbf{N}} 
				\tau_{k}} -\frac{1}{\sup_{k \in \mathbf{N}}\tau_{k}}+  \eta_{3} 
				\norm{K} \sup_{k 
				\in \mathbf{N}}\beta_{k}^{2}
			$.
			\item \label{lemma:infsupconstants:eta3:b} There exists $\eta_{3} \in 
			\mathbf{R}_{++}$ such that 
			$\eta_{3} > \frac{\sigma}{ 1 -\alpha \tau \sigma \norm{K}^{2}}$ and
			$\eta_{3}\norm{K}^{2} \beta^{2} < \zeta  \mu +\frac{1}{\tau} 
			-\frac{1}{\alpha \tau}$. 
			
			Consequently, we have that 
			$
			\zeta \mu\geq \frac{1}{\alpha \inf_{k \in \mathbf{N}} 
				\tau_{k}} -\frac{1}{\sup_{k \in \mathbf{N}}\tau_{k}}+  \eta_{3} 
				\norm{K}^{2}\sup_{k 
				\in \mathbf{N}}\beta_{k}^{2}
			$.
		\end{enumerate}
		
		
		\item \label{lemma:infsupconstants:eta1a} Suppose $1 -\alpha \tau \sigma 
		\norm{K}^{2} >0$ and $\eta_{3} \in 
		\mathbf{R}_{++}$ satisfying 
		$\eta_{3} > \frac{\sigma \norm{K}}{ 1 -\alpha \tau \sigma \norm{K}^{2}}$. Then 
		there exists $\eta_{1} \in \mathbf{R}_{++}$ such that $\eta_{1} \geq \frac{\sigma 
		\norm{K}\eta_{3}}{\eta_{3} -\sigma \norm{K}}$ and
		$\alpha \tau \eta_{1}\norm{K} <1$. 
		Consequently, we have that $\norm{K} \left(\frac{1}{\eta_{1}} 
		+\frac{1}{\eta_{3}}\right) \leq \frac{1}{\sup_{i \in 
				\mathbf{N}}\sigma_{i}}$. 
		
		\item  \label{lemma:infsupconstants:eta1b}  Suppose $1 -\alpha \tau \sigma 
		\norm{K}^{2} >0$ and $\eta_{3} \in 
		\mathbf{R}_{++}$ satisfying 
		$\eta_{3} > \frac{\sigma  }{ 1 -\alpha \tau \sigma \norm{K}^{2}}$. Then 
		there exists $\eta_{1} \in \mathbf{R}_{++}$ such that $\eta_{1} \geq \frac{\sigma 
		 \eta_{3}}{\eta_{3} -\sigma \norm{K}}$ and
		$\alpha \tau \eta_{1}\norm{K}^{2} <1$. 
		Consequently, we have that $\frac{1}{\eta_{1}} +\frac{1}{\eta_{3}} \leq 
		\frac{1}{\sup_{k \in \mathbf{N}} \sigma_{k}}$.
		
		\item  \label{lemma:infsupconstants:eta2a} Suppose that $\alpha >0$ and $\eta_{1} 
		\in \mathbf{R}_{++}$ satisfying 
			$\alpha \tau \eta_{1}\norm{K} <1$.  Then there exists $\eta_{2} \in 
			\mathbf{R}_{++}$  such that 
			$\eta_{2} \geq \frac{\tau \norm{K}}{\alpha - \tau \eta_{1} \norm{K}\alpha^{2}}$. 	
		Consequently, we have that $\norm{K} \left( \eta_{1} \alpha^{2} 
		+\frac{1}{\eta_{2}}\right) \leq \frac{\alpha}{\sup_{i \in 
				\mathbf{N}}\tau_{i}}$.
		\item  \label{lemma:infsupconstants:eta2b}  Suppose that $\alpha >0$ and 
		$\eta_{1} \in \mathbf{R}_{++}$ satisfying 
			$\alpha \tau \eta_{1}\norm{K}^{2} <1$.  Then there exists $\eta_{2} \in 
			\mathbf{R}_{++}$  such that 
			$\eta_{2} \geq \frac{\tau }{\alpha - \tau \eta_{1} \norm{K}\alpha^{2}}$.
			Consequently, we have that
			$\eta_{1} \norm{K}^{2}\alpha +\frac{1}{\eta_{2}\alpha} \leq \frac{1}{\sup_{k \in 	
			\mathbf{N}}\tau_{k}}$.
	\end{enumerate}
\end{lemma}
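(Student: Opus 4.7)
The plan is to prove each of the ten items (i)--(x) by elementary algebraic manipulations, exploiting crucially the constant-parameter assumption in \cref{eq:lemma:infsupconstants:constant}, which collapses every $\inf_k$ and $\sup_k$ appearing in the conclusions into the constants $\tau, \sigma, \alpha, \beta$, and which makes every $\alpha-\alpha_k$ equal to zero. With this reduction, each assertion becomes the nonemptiness of an explicit real interval, whose nontriviality follows from a short chain of inequalities using only the hypotheses of that item. Throughout, I would assume $\|K\|>0$; the case $\|K\|=0$ trivializes every statement.

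For item \cref{lemma:infsupconstants:munu}, I would observe that $\frac{\zeta\mu + 1/\tau}{\sigma} \geq \frac{1}{\tau\sigma} \to +\infty$ as $\tau,\sigma \to 0^+$, and similarly for the other quantity, so both inequalities hold for $\tau,\sigma$ sufficiently small. For \cref{lemma:infsupconstants:alpha}, I would rewrite the two hypotheses as $\tau\sigma\|K\|^2 < \zeta\mu\tau+1$ and $\tau\sigma\|K\|^2 < \zeta\nu\sigma+1$, i.e.\ $\frac{1}{\tau\sigma\|K\|^2} > \max\!\left(\frac{1}{\zeta\mu\tau+1},\frac{1}{\zeta\nu\sigma+1}\right)$, so the interval $\bigl(\max(\frac{1}{\zeta\mu\tau+1},\frac{1}{\zeta\nu\sigma+1}),\min(1,\frac{1}{\tau\sigma\|K\|^2})\bigr)$ is nonempty and any $\alpha$ there does the job. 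For item \cref{lemma:infsupconstants:beta}, the key step is that $\alpha > \frac{1}{\zeta\mu\tau+1}$ rearranges to $\zeta\mu + \frac{1}{\tau} - \frac{1}{\alpha\tau} > 0$, so the right-hand side of the required inequality is strictly positive and any small enough $\beta\in\mathbf{R}_{+}$ (including $\beta=0$) works. Item \cref{lemma:infsupconstants:nu} reduces, under the constancy of $\sigma_k$ and $\alpha_k\equiv \alpha$, to $\zeta\nu \geq \frac{1-\alpha}{\alpha\sigma}$, which is precisely a rewriting of $\alpha\geq\frac{1}{\zeta\nu\sigma+1}$.

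The remaining items \cref{lemma:infsupconstants:eta4}--\cref{lemma:infsupconstants:eta2b} follow the same template: choose the auxiliary constant in an interval whose endpoints are forced to be ordered by the standing hypothesis, then verify the stated consequence by inverting. For \cref{lemma:infsupconstants:eta4} I would take $\eta_4\in[\tau,1/(\alpha\sigma\|K\|^2))$, nonempty because $\alpha\tau\sigma\|K\|^2<1$. For \cref{lemma:infsupconstants:eta3:a} and \cref{lemma:infsupconstants:eta3:b}, the crucial point is that the hypothesis $\sigma\|K\|^2\beta^2 < (\zeta\mu+\tfrac1\tau-\tfrac1{\alpha\tau})(1-\alpha\tau\sigma\|K\|^2)$ is exactly, after dividing by $\|K\|\beta^2(1-\alpha\tau\sigma\|K\|^2)$ (resp.\ $\|K\|^2\beta^2(\cdots)$), the statement that $\frac{\sigma\|K\|}{1-\alpha\tau\sigma\|K\|^2}$ (resp.\ $\frac{\sigma}{1-\alpha\tau\sigma\|K\|^2}$) is strictly less than $\frac{\zeta\mu+1/\tau-1/(\alpha\tau)}{\|K\|\beta^2}$ (resp.\ with $\|K\|^2$); pick $\eta_3$ in between. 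For items \cref{lemma:infsupconstants:eta1a} and \cref{lemma:infsupconstants:eta1b}, the inequality $\eta_3>\frac{\sigma\|K\|}{1-\alpha\tau\sigma\|K\|^2}$ (resp.\ $\eta_3>\frac{\sigma}{1-\alpha\tau\sigma\|K\|^2}$) rearranges exactly to $\frac{\sigma\|K\|\eta_3}{\eta_3-\sigma\|K\|}<\frac{1}{\alpha\tau\|K\|}$ (resp.\ $<\frac{1}{\alpha\tau\|K\|^2}$), which makes the defining interval for $\eta_1$ nonempty. Items \cref{lemma:infsupconstants:eta2a} and \cref{lemma:infsupconstants:eta2b} are immediate from $\alpha\tau\eta_1\|K\|<1$ (resp.\ $\alpha\tau\eta_1\|K\|^2<1$), which guarantees the denominator $\alpha-\tau\eta_1\|K\|\alpha^{2}$ is positive. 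In each case the final "consequently" clause is obtained by taking reciprocals of the defining lower bound for $\eta_1$ or $\eta_2$ and rearranging.

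The main difficulty is not conceptual but combinatorial book-keeping: one must track a cascade of ten interdependent parameter choices and, for each item, identify exactly which of the hypotheses of the previous items is being reused and how the stated lower bound, upper bound, and consequence are linked by a single algebraic inversion. A secondary care point is the strict-versus-weak nature of the inequalities, since some items require strict bounds to guarantee open intervals are nonempty while others conclude with non-strict bounds; I would handle this by choosing the auxiliary constants to satisfy the strict versions and then observing that the non-strict conclusions follow \emph{a fortiori}.
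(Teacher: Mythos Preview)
Your proposal is correct and follows essentially the same approach as the paper: both arguments exploit the constancy assumption \cref{eq:lemma:infsupconstants:constant} to collapse all $\inf_k/\sup_k$ to constants and all $(\alpha-\alpha_k)$ to zero, then verify item by item that the relevant real interval for the auxiliary parameter is nonempty via the same algebraic rearrangements you outline (the paper likewise treats the case $\norm{K}=0$ separately and, for \cref{lemma:infsupconstants:eta3}, implicitly also the case $\norm{K}\beta=0$, which you should mention explicitly since you divide by $\norm{K}\beta^{2}$ there).
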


\begin{proof}
	\cref{lemma:infsupconstants:munu}:  Clearly,
	$ \frac{\zeta \mu +\frac{1}{\tau}}{\sigma}$ and 
	$  \frac{ \zeta \nu +\frac{1}{\sigma}}{\tau}$ will both go to positive infinity when 
	$\tau >0$ and $\sigma >0$ go to $0^{+}$.
	Hence, 
	we can always take $\tau >0$ and $\sigma >0$ small enough
	to satisfy the requirements  $\norm{K}^{2} <\frac{\zeta \mu +\frac{1}{\tau}}{\sigma}$ 
	and 
	$\norm{K}^{2} < \frac{ \zeta \nu +\frac{1}{\sigma}}{\tau}$. 
	
	\cref{lemma:infsupconstants:alpha}:
	Note that if $\norm{K} =0$, then $\alpha \tau \sigma \norm{K}^{2} <1$ holds 
	automatically; if $\norm{K} \neq 0$, then
	\begin{subequations}\label{eq:lemma:infsupconstants:alpha}
		\begin{align}
			&\frac{1}{\zeta \mu \tau +1} <\frac{1}{\tau \sigma \norm{K}^{2}} \Leftrightarrow
			\norm{K}^{2}<\frac{ \zeta \mu \tau +1}{\tau \sigma} 
			=\frac{\zeta \mu +\frac{1}{\tau}}{\sigma};\\
			&\frac{1}{\zeta   \nu \sigma +1} <\frac{1}{\zeta \tau \sigma \norm{K}^{2}} 
			\Leftrightarrow
			\norm{K}^{2}<\frac{ \zeta \nu \sigma +1}{\tau \sigma} 
			=\frac{\zeta \nu +\frac{1}{\sigma}}{\tau}.
		\end{align}
	\end{subequations}
	Because $\mu>0$, $\nu >0$, $\tau >0$, and $\sigma >0$, due to 
	\cref{eq:lemma:infsupconstants:alpha} and our assumptions 
	$\norm{K}^{2} <\frac{\zeta \mu +\frac{1}{\tau}}{\sigma}$ and 
	$\norm{K}^{2} < \frac{\zeta \nu +\frac{1}{\sigma}}{\tau}$, 
	we are able to choose
	$\alpha \in \left( \frac{1}{ \mu \tau +1} , 1\right) \subseteq (0,1)$
	such that $\alpha \geq \frac{1}{\nu \sigma +1}$,
	and $\alpha \tau \sigma \norm{K}^{2} <1$. 
	Clearly, via \cref{eq:lemma:infsupconstants:constant}, 
	our assumption $\alpha \tau \sigma \norm{K}^{2} <1$ ensures 
	the required condition 
	$\alpha  \sup_{k \in \mathbf{N}} \tau_{k} \sup_{k \in \mathbf{N}} \sigma_{k} 
	\norm{K}^{2}< 
	1$.
	
	\cref{lemma:infsupconstants:beta}:  Notice that 
	$\zeta \mu +\frac{1}{\tau} 	-\frac{1}{\alpha \tau} >0 \Leftrightarrow
	\alpha > \frac{1}{\zeta \mu \tau +1}$. 
	So bearing our assumptions $\alpha > \frac{1}{\zeta \mu \tau +1}$ and 
	$\alpha \tau \sigma \norm{K}^{2} <1$ in mind, 
	we are able to take $\beta \in \mathbf{R}_{+}$ small enough such that 
	our assumption $\sigma \norm{K}^{2}\beta^{2} < \left(\zeta \mu +\frac{1}{\tau} 
	-\frac{1}{\alpha \tau}\right) \left(1-\alpha \tau \sigma \norm{K}^{2}\right)$.

	\cref{lemma:infsupconstants:nu}:
	Due to \cref{eq:lemma:infsupconstants:constant}, $ \sup_{k \in \mathbf{N}}(\alpha 
	-\alpha_{k})^{2}=0$.
	Because $\alpha \geq \frac{1}{\zeta \nu \sigma +1} \Leftrightarrow \zeta \nu \geq 
	\frac{1}{\alpha \sigma} -\frac{1}{\sigma}$,  
	our assumption $\alpha \geq \frac{1}{\zeta \nu \sigma +1}$ and 
	\cref{eq:lemma:infsupconstants:constant} 
	lead to the desired requirement 
	$
	\zeta \nu \geq \frac{1}{\alpha \inf_{k \in 
			\mathbf{N}}\sigma_{k}} -\frac{1}{\sup_{k \in \mathbf{N}}\sigma_{k}}  
	+ \eta_{2}\norm{K}^{i} \sup_{k \in \mathbf{N}}(\alpha -\alpha_{k})^{2}
	$. 
	
	\cref{lemma:infsupconstants:eta4}: 
	Because $\alpha \tau \sigma \norm{K}^{2} <1$ and $\tau >0$, we are able to 
	take $\eta_{4} >0$ such that $\eta_{4} \geq \tau$ and 
	$\alpha \eta_{4} \norm{K}^{2} < \frac{1}{\sigma}$, 
	which, combined  \cref{eq:lemma:infsupconstants:constant},
	satisfies the condition 
	$\eta_{4} \geq \sup_{i \in \mathbf{N}} \tau_{i}$,
	and $\frac{1}{\sup_{i \in \mathbf{N}}\sigma_{i}} -\alpha \eta_{4} \norm{K}^{2} >0$.
	
	\cref{lemma:infsupconstants:eta3:a}:
	Clearly, if $\norm{K}\beta \neq 0$, then
	\begin{align*}
		&\sigma \norm{K}^{2}\beta^{2} < \left(\zeta \mu +\frac{1}{\tau} 
		-\frac{1}{\alpha \tau}\right) \left(1-\alpha \tau \sigma \norm{K}^{2}\right)\\
		\Rightarrow &
		\sigma \norm{K} <
		\frac{\sigma \norm{K} }{1 -\alpha \tau \sigma \norm{K}^{2}} <  \frac{\zeta \mu 
			+\frac{1}{\tau} -\frac{1}{\alpha \tau}}{\norm{K} \beta^{2}}
	\end{align*}
	Based on our assumptions $\sigma \norm{K}^{2}\beta^{2} < \left(\zeta \mu 
	+\frac{1}{\tau} 
	-\frac{1}{\alpha \tau}\right) \left(1-\alpha \tau \sigma \norm{K}^{2}\right)$,
	we are able to take $\eta_{3} >0$ such that 
	$ \sigma \norm{K} \leq  \frac{\sigma  \norm{K}}{ 1 -\alpha \tau \sigma \norm{K}^{2}} < 
	\eta_{3}$, and
	$\eta_{3}\norm{K} \beta^{2} \leq \zeta  \mu +\frac{1}{\tau} 
	-\frac{1}{\alpha \tau}$.
	Hence, our assumption of $\eta_{3}$ together with  
	\cref{eq:lemma:infsupconstants:constant}
	guarantees the condition 
	$
	\zeta \mu \geq \frac{1}{\alpha \inf_{k \in \mathbf{N}} 
		\tau_{k}} -\frac{1}{\sup_{k \in \mathbf{N}}\tau_{k}}+  \eta_{3} \norm{K}^{2}\sup_{k 
		\in \mathbf{N}}\beta_{k}^{2}
	$.

	\cref{lemma:infsupconstants:eta3:b}:
	Note that $0<\alpha \tau \sigma \norm{K}^{2} <1$ leads to 
	$\sigma < \frac{\sigma}{1-\alpha \tau \sigma \norm{K}^{2} }$. 
	Because we assumed that $\norm{K}^{2} \beta^{2} \sigma < \left(\zeta \mu 
	+\frac{1}{\tau}-\frac{1}{\alpha 
		\tau} \right)\left( 1-\alpha \tau \sigma \norm{K}^{2}\right)
	$, 
	 we are able to take $\eta_{3} \in \mathbf{R}_{++}$ such that
	$\eta_{3} >\frac{\sigma}{1-\alpha \tau \sigma \norm{K}^{2}}$
	and $\norm{K}^{2}\beta^{2}\eta_{3} \leq \zeta \mu +\frac{1}{\tau} -\frac{1}{\alpha 
	\tau}$.
	Hence, via 	\cref{eq:lemma:infsupconstants:constant},  our assumption of $\eta_{3}$ 
	satisfies the required condition 
	$
	\zeta \mu \geq \frac{1}{\alpha \inf_{k \in \mathbf{N}} 
		\tau_{k}} -\frac{1}{\sup_{k \in \mathbf{N}}\tau_{k}}+  \eta_{3} \norm{K}^{2}\sup_{k 
		\in \mathbf{N}}\beta_{k}^{2}
	$. 
	 
	\cref{lemma:infsupconstants:eta1a}:
	It is easy to check that if $\norm{K} \neq 0$, then
	\begin{align*}
		\frac{\sigma \norm{K}}{1 -\alpha \tau \sigma \norm{K}^{2}} < \eta_{3} 
		\Leftrightarrow  
		\frac{\sigma \norm{K}\eta_{3}}{\eta_{3} -\sigma \norm{K}} < \frac{1}{\alpha \tau 
			\norm{K}}.	
	\end{align*}
	Based on our assumption
	$	\frac{\sigma \norm{K}}{1 -\alpha \tau \sigma \norm{K}^{2}} < \eta_{3}$, we are able 
	to 
	take $\eta_{1} >0$ such that 
	$\eta_{1} \geq \frac{\sigma \norm{K}\eta_{3}}{\eta_{3} -\sigma \norm{K}}$ and
	$\alpha \tau \eta_{1}\norm{K} <1$.	
	Note that under our assumption $\eta_{3} > \sigma \norm{K}$, we have that 
	$\eta_{1} \geq \frac{\sigma \norm{K}\eta_{3}}{\eta_{3} -\sigma \norm{K}}
	\Leftrightarrow
	\norm{K} \left(\frac{1}{\eta_{1}} +\frac{1}{\eta_{3}}\right) \leq \frac{1}{ \sigma}$. 
	So our assumption on $\eta_{1}$ yields  
	$\norm{K} \left(\frac{1}{\eta_{1}} +\frac{1}{\eta_{3}}\right) \leq \frac{1}{\sup_{i \in 
			\mathbf{N}}\sigma_{i}}$.

	\cref{lemma:infsupconstants:eta1b}:
	Clearly, if $\norm{K} \neq 0$, then
	$\eta_{3} >\frac{\sigma}{1-\alpha \tau \sigma \norm{K}^{2}} \Leftrightarrow
	\frac{\sigma \eta_{3}}{\eta_{3} -\sigma} <\frac{1}{\alpha \tau \norm{K}^{2}}$. 
	Furthermore, based on our assumptions  
	$\eta_{3} >\frac{\sigma}{1-\alpha \tau \sigma \norm{K}^{2}}>\sigma$,
  it is clear that
	$\frac{1}{\eta_{1}} +\frac{1}{\eta_{3}} \leq \frac{1}{\sigma} \Leftrightarrow
	\eta_{1} \geq \frac{\sigma \eta_{3}}{\eta_{3} -\sigma}$.
	Because we assumed 	
	$\eta_{3} >\frac{\sigma}{1-\alpha \tau \sigma \norm{K}^{2}}>\sigma$,
	we are able to take 
	$\eta_{1} \in \mathbf{R}_{++}$ such that 
	$\eta_{1} \geq \frac{\sigma \eta_{3}}{\eta_{3} -\sigma}$
	and $\alpha \tau \eta_{1}\norm{K}^{2} <1$,
	which, connecting with \cref{eq:lemma:infsupconstants:constant}, ensures
	$\frac{1}{\eta_{1}} +\frac{1}{\eta_{3}} \leq \frac{1}{\sup_{k \in \mathbf{N}} \sigma_{k}}$.

	\cref{lemma:infsupconstants:eta2a}:
	Using our assumptions $\alpha >0$ and $\alpha \tau \eta_{1}\norm{K} <1$, 
	we are able to take 
	$\eta_{2} >0$ such that $\eta_{2} \geq \frac{\tau \norm{K}}{\alpha - \tau \eta_{1} 
		\norm{K}\alpha^{2}}$. Notice that 
	$\eta_{2} \geq \frac{\tau \norm{K}}{\alpha - \tau \eta_{1} \norm{K}\alpha^{2}} 
	\Leftrightarrow
	\norm{K} \left( \eta_{1} \alpha^{2} +\frac{1}{\eta_{2}}\right) \leq \frac{\alpha}{\tau}$.
	Hence, via \cref{eq:lemma:infsupconstants:constant}, our assumption on $\eta_{2}$ 
	leads to  
	$\norm{K} \left( \eta_{1} \alpha^{2} +\frac{1}{\eta_{2}}\right) \leq \frac{\alpha}{\sup_{i 
	\in 	\mathbf{N}}\tau_{i}}$

	\cref{lemma:infsupconstants:eta2b}:
	Because $\alpha >0$ and $\alpha \tau \eta_{1} \norm{K}^{2} <1$,
 we are able to take 
	$\eta_{2} \in \mathbf{R}_{++}$ such that. 
	$\eta_{2} \geq \frac{\tau}{\alpha(1-\alpha \tau \eta_{1}\norm{K}^{2})}$.
	Because $\alpha \tau \eta_{1} \norm{K}^{2} <1$, it is easy to see that 
	$\eta_{2} \geq \frac{\tau}{\alpha(1-\alpha \tau \eta_{1}\norm{K}^{2})}
	\Leftrightarrow 
	\eta_{1} \norm{K}^{2}\alpha +\frac{1}{\eta_{2}\alpha} \leq \frac{1}{ \tau}$.
	Therefore, due to \cref{eq:lemma:infsupconstants:constant}, we conclude that
	$\eta_{1} \norm{K}^{2}\alpha +\frac{1}{\eta_{2}\alpha} \leq \frac{1}{\sup_{k \in 
			\mathbf{N}}\tau_{k}}$.
\end{proof}

\subsection{Properties of Iteration Points}
In this subsection, 
we provide useful properties of iteration points 
$\left(\left(x^{k},y^{k}\right)\right)_{k \in \mathbf{N}}$ 
generated by the alternating
proximal mapping method stated in \cref{eq:fact:algorithmsymplity} to facilitate our
main proofs later.
\begin{lemma} \label{lemma:partialinequalities}
Let $k \in \mathbf{N}$.	 
The following assertions hold. 
\begin{enumerate}
	\item \label{lemma:partialinequalities:innpx} 
	$(\forall x \in \mathcal{H}_{1})$ 
	$\innp{\frac{x^{k} -x^{k+1}}{\tau_{k}} -K^{*}\bar{y}^{k+1}, x -x^{k+1}} 
  +\frac{\mu}{2} \norm{x -x^{k+1}}^{2} + g(x^{k+1}) \leq g(x)$.
	\item \label{lemma:partialinequalities:innpy}
	$(\forall y \in \mathcal{H}_{2})$ 
	$\innp{\frac{y^{k}-y^{k+1}}{\sigma_{k}} +K\bar{x}^{k}, y - y^{k+1}} 
	+\frac{\nu}{2}\norm{y-y^{k+1}}^{2} +h(y^{k+1}) \leq h(y)$.
	\item  \label{lemma:partialinequalities:normx} 
	$(\forall x \in \mathcal{H}_{1})$ 
	$\frac{1}{2 \tau_{k}} \left( \norm{x^{k} -x^{k+1}}^{2} 
	+ \norm{x - x^{k+1}}^{2} -\norm{x-x^{k}}^{2} \right) 
	+\frac{\mu}{2} \norm{x -x^{k+1}}^{2}$ \\ 
	$  - 
	\innp{K^{*}\bar{y}^{k+1}, x -x^{k+1}}  +g(x^{k+1}) \leq g(x)$.
		\item \label{lemma:partialinequalities:normy} 
		$(\forall y \in \mathcal{H}_{2})$ 
	$\frac{1}{2 \sigma_{k}}\left( \norm{y^{k} -y^{k+1}}^{2} 
	+\norm{y-y^{k+1}}^{2} -\norm{y - y^{k}}^{2} \right) 
	+\frac{\nu}{2}\norm{y-y^{k+1}}^{2} \\
	+\innp{K\bar{x}^{k}, y - y^{k+1}} 
	+h(y^{k+1}) \leq h(y)$.
	\item  \label{lemma:partialinequalities:sum}
	For every $x \in \mathcal{H}_{1}$ and for every $y \in \mathcal{H}_{2}$,
	we have that
	\begin{align*}
		&\frac{1}{2 \tau_{k}} \norm{x -x^{k}}^{2} +\frac{1}{2\sigma_{k}} \norm{y 
		-y^{k}}^{2}\\
		\geq &\left(\frac{\mu}{2} +\frac{1}{2\tau_{k}}\right)\norm{x -x^{k+1}}^{2}+\left( 
		\frac{\nu}{2} +\frac{1}{2 \sigma_{k}}\right) \norm{y -y^{k+1}}^{2}
		+ \frac{1}{2\tau_{k}} \norm{x^{k} -x^{k+1}}^{2} 
		+ \frac{1}{2 \sigma_{k}} \norm{y^{k} -y^{k+1}}^{2}\\
		&+\innp{K \bar{x}^{k}, y-y^{k+1}}-\innp{\bar{y}^{k+1}, K(x-x^{k+1})}+g(x^{k+1})
		-h(y) +h(y^{k+1})-g(x).
	\end{align*}
\item  \label{lemma:partialinequalities:fK}
For every $x \in \mathcal{H}_{1}$ and for every $y \in \mathcal{H}_{2}$,
we have that
\begin{align*}
	&\innp{K \bar{x}^{k}, y-y^{k+1}}-\innp{\bar{y}^{k+1}, K(x-x^{k+1})}+g(x^{k+1})
	-h(y) +h(y^{k+1})-g(x)\\
	=&f(x^{k+1},y) -f(x,y^{k+1})+\innp{K(x^{k+1}-\bar{x}^{k}),y^{k+1}-y}
	-\innp{K(x^{k+1}-x),y^{k+1}-\bar{y}^{k+1}}.
\end{align*}
\item  \label{lemma:partialinequalities:f}
For every $x \in \mathcal{H}_{1}$ and for every $y \in \mathcal{H}_{2}$,
we have that
\begin{align*}
	&\innp{K \bar{x}^{k}, y-y^{k+1}}-\innp{\bar{y}^{k+1}, K(x-x^{k+1})}+g(x^{k+1})
	-h(y) +h(y^{k+1})-g(x)\\
	=& f(\bar{x}^{k},y) - f(\bar{x}^{k}, y^{k+1})   + f(x^{k+1}, \bar{y}^{k+1}) - 
	f(x,\bar{y}^{k+1}).
\end{align*}

\end{enumerate}
\end{lemma}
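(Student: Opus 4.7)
The plan is to treat each of the seven assertions as a routine but careful unpacking of the iterate scheme \cref{eq:fact:algorithmsymplity} together with strong convexity and algebraic identities involving $f$ from \cref{eq:fspecialdefine}.

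For \cref{lemma:partialinequalities:innpx} and \cref{lemma:partialinequalities:innpy}, I would first translate the proximal updates into subgradient inclusions. From \cref{eq:fact:algorithmsymplity:xk} and the characterization $z = (\Id + \tau_k\partial g)^{-1}(w) \Longleftrightarrow \frac{w - z}{\tau_k} \in \partial g(z)$, one obtains $\frac{x^{k} - x^{k+1}}{\tau_k} - K^{*}\bar{y}^{k+1} \in \partial g(x^{k+1})$, and symmetrically $\frac{y^{k} - y^{k+1}}{\sigma_k} + K\bar{x}^{k} \in \partial h(y^{k+1})$ from \cref{eq:fact:algorithmsymplity:yk}. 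Plugging these subgradients into the strong convexity inequalities \cref{eq:gmu} and \cref{eq:hnu} of \cref{assumption:basic} then yields \cref{lemma:partialinequalities:innpx} and \cref{lemma:partialinequalities:innpy} directly.

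For \cref{lemma:partialinequalities:normx} and \cref{lemma:partialinequalities:normy}, I would apply the standard three-point identity
\[
\innp{a - b, c - b} = \tfrac{1}{2}\bigl(\norm{a-b}^{2} + \norm{c-b}^{2} - \norm{a-c}^{2}\bigr)
\]
to the inner products $\innp{\frac{x^{k}-x^{k+1}}{\tau_k}, x - x^{k+1}}$ and $\innp{\frac{y^{k}-y^{k+1}}{\sigma_k}, y - y^{k+1}}$ appearing in parts \cref{lemma:partialinequalities:innpx} and \cref{lemma:partialinequalities:innpy}, and substitute. Part \cref{lemma:partialinequalities:sum} is then obtained by adding \cref{lemma:partialinequalities:normx} and \cref{lemma:partialinequalities:normy} and rearranging terms so that $\norm{x - x^{k}}^{2}$ and $\norm{y - y^{k}}^{2}$ sit on the left-hand side while all of $\norm{x - x^{k+1}}^{2}$, $\norm{y - y^{k+1}}^{2}$, $\norm{x^{k} - x^{k+1}}^{2}$, $\norm{y^{k} - y^{k+1}}^{2}$, the $K$-inner-product cross terms, and the $g, h$ values sit on the right.

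Parts \cref{lemma:partialinequalities:fK} and \cref{lemma:partialinequalities:f} are purely algebraic identities driven by the definition $f(x,y) = \innp{Kx,y} + g(x) - h(y)$ from \cref{eq:fspecialdefine}. For \cref{lemma:partialinequalities:fK} I would expand $f(x^{k+1}, y) - f(x, y^{k+1})$ and then expand $\innp{K(x^{k+1} - \bar{x}^{k}), y^{k+1} - y}$ and $\innp{K(x^{k+1} - x), y^{k+1} - \bar{y}^{k+1}}$ bilinearly; after cancellation of the $\innp{Kx^{k+1}, y^{k+1}}$, $\innp{Kx^{k+1}, y}$, and $\innp{Kx, y^{k+1}}$ terms, what remains is exactly the expression on the left-hand side of \cref{lemma:partialinequalities:fK}. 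Similarly, for \cref{lemma:partialinequalities:f}, I would compute $f(\bar{x}^{k}, y) - f(\bar{x}^{k}, y^{k+1})$ (the $g(\bar{x}^{k})$ terms cancel, leaving $\innp{K\bar{x}^{k}, y - y^{k+1}} - h(y) + h(y^{k+1})$) and $f(x^{k+1}, \bar{y}^{k+1}) - f(x, \bar{y}^{k+1})$ (the $h(\bar{y}^{k+1})$ terms cancel, leaving $-\innp{\bar{y}^{k+1}, K(x - x^{k+1})} + g(x^{k+1}) - g(x)$), and add them. There is no real obstacle here beyond bookkeeping; the only mild care needed is to keep the asymmetry between $\bar{x}^{k}$ and $\bar{y}^{k+1}$ straight (note the index shift inherent in the alternating scheme) so that the identities in \cref{lemma:partialinequalities:fK} and \cref{lemma:partialinequalities:f} are in the exact form used later in \cref{section:linearconvergeIterations} and \cref{section:linearconvergf}.
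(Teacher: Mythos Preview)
Your plan is correct and matches the paper's proof essentially step for step: the same subgradient inclusion from the resolvent, the same strong convexity inequalities \cref{eq:gmu}--\cref{eq:hnu}, the same three-point identity for \cref{lemma:partialinequalities:normx}--\cref{lemma:partialinequalities:normy}, and the same sum for \cref{lemma:partialinequalities:sum}. The only cosmetic difference is in \cref{lemma:partialinequalities:f}, where the paper routes through \cref{lemma:partialinequalities:fK} plus an external identity \cite[Lemma~4.4(i)]{Oy2023Proximity}, while your direct expansion of $f(\bar{x}^{k},y) - f(\bar{x}^{k},y^{k+1})$ and $f(x^{k+1},\bar{y}^{k+1}) - f(x,\bar{y}^{k+1})$ is more self-contained and equally valid.
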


\begin{proof}
	\cref{lemma:partialinequalities:innpx}: 
	Based on \cref{eq:fact:algorithmsymplity:xk}, 
	we have that for every $k \in \mathbf{N}$,
	\begin{align*}
		 -\tau_{k}K^{*}\bar{y}^{k+1}+x^{k} \in x^{k+1} +\tau_{k}\partial g(x^{k+1})
		 \Leftrightarrow 
		 \frac{x^{k} -x^{k+1}}{\tau_{k}} -K^{*}\bar{y}^{k+1} \in \partial g(x^{k+1}).
	\end{align*}
	Combine this with \cref{eq:gmu}
	in \cref{assumption:basic} to deduce the required result.
	
	\cref{lemma:partialinequalities:innpy}:
	 In view of  \cref{eq:fact:algorithmsymplity:yk},
	 for every $k \in \mathbf{N}$,
	 \begin{align*}
	 	\sigma_{k}K\bar{x}^{k} +y^{k} \in y^{k+1} +\sigma_{k} \partial h(y^{k+1})
	 	\Leftrightarrow 
	 	\frac{y^{k}-y^{k+1}}{\sigma_{k}} +K\bar{x}^{k} \in  \partial h(y^{k+1}),
	 \end{align*} 
	 	which, connecting with
	 	\cref{eq:hnu} in \cref{assumption:basic},
	 	yields the desired result. 
	 	
	\cref{lemma:partialinequalities:normx}$\&$\cref{lemma:partialinequalities:normy}:
	It is easy to check that for every  Hilbert space $\mathcal{H}$
	 and for every  $(a, b,c) \in  \mathcal{H}^{3}$,
	\begin{align}\label{eq:lemma:partialinequalities:normx:abc}
		\innp{a-b, c-b} = \frac{1}{2} \left(\norm{a-b}^{2} + \norm{c-b}^{2} 
		-\norm{c-a}^{2}\right).
	\end{align}
	
	The inequality in 	\cref{lemma:partialinequalities:normx} 
	(resp.\,in \cref{lemma:partialinequalities:normy}) is from 
	\cref{eq:lemma:partialinequalities:normx:abc} and the result 
	obtained in 
	\cref{lemma:partialinequalities:innpx} (resp.\,\cref{lemma:partialinequalities:innpy}). 
	respectively.  
	
	\cref{lemma:partialinequalities:sum}: Add the inequalities in  
	\cref{lemma:partialinequalities:normx} and \cref{lemma:partialinequalities:normy}
	to derive the required result.
	
	\cref{lemma:partialinequalities:fK}: It is clear that 
	\begin{align*}
		&\innp{K \bar{x}^{k}, y-y^{k+1}}-\innp{\bar{y}^{k+1}, K(x-x^{k+1})}+g(x^{k+1})
		-h(y) +h(y^{k+1})-g(x)\\
		=&\innp{Kx^{k+1},y} +g(x^{k+1})-h(y) 
		-\left(\innp{Kx,y^{k+1}}+g(x)-h(y^{k+1})\right)\\
		&-\innp{Kx^{k+1},y} + \innp{Kx,y^{k+1}} 
		+\innp{K \bar{x}^{k}, y-y^{k+1}}-\innp{\bar{y}^{k+1}, K(x-x^{k+1})}\\
		=&f(x^{k+1},y) -f(x,y^{k+1})+\innp{K(x^{k+1}-\bar{x}^{k}),y^{k+1}-y}
		-\innp{K(x^{k+1}-x),y^{k+1}-\bar{y}^{k+1}}\\
		&-\innp{Kx^{k+1},y^{k+1}-y}+\innp{K(x^{k+1}-x),y^{k+1}}-\innp{Kx^{k+1},y}+
		\innp{Kx,y^{k+1}}\\
		=&f(x^{k+1},y) -f(x,y^{k+1})+\innp{K(x^{k+1}-\bar{x}^{k}),y^{k+1}-y}
		-\innp{K(x^{k+1}-x),y^{k+1}-\bar{y}^{k+1}},
	\end{align*}
where in the second equality, we use \cref{eq:fspecialdefine}.

	\cref{lemma:partialinequalities:f}: Applying the result stated in 
		\cref{lemma:partialinequalities:fK} above and employing 
		\cite[Lemma~4.4(i)]{Oy2023Proximity}, we have that
	\begin{align*}
		&\innp{K \bar{x}^{k}, y-y^{k+1}}-\innp{\bar{y}^{k+1}, K(x-x^{k+1})}+g(x^{k+1})
		-h(y) +h(y^{k+1})-g(x)\\
		=&f(x^{k+1},y) -f(x,y^{k+1})+\innp{K(x^{k+1}-\bar{x}^{k}),y^{k+1}-y}
		-\innp{K(x^{k+1}-x),y^{k+1}-\bar{y}^{k+1}}\\
		=&f(x^{k+1},y) -f(x,y^{k+1})\\
		&-\left(f(x^{k+1},y) -f(\bar{x}^{k},y) + f(\bar{x}^{k}, y^{k+1})   
		-f(x,y^{k+1}) - f(x^{k+1}, \bar{y}^{k+1}) + f(x,\bar{y}^{k+1})\right)\\
		=& f(\bar{x}^{k},y) - f(\bar{x}^{k}, y^{k+1})   + f(x^{k+1}, \bar{y}^{k+1}) - 
		f(x,\bar{y}^{k+1}).
	\end{align*}

\end{proof}

\begin{lemma} \label{lemma:inequalityx*y*}
	Let $(x^{*},y^{*})$ be a saddle-point of $f$ and let $k$ be in $\mathbf{N}$. Then
	\begin{align*}
		&\frac{1}{2 \tau_{k}} \norm{x^{*} -x^{k}}^{2} +\frac{1}{2\sigma_{k}} \norm{y^{*} 
			-y^{k}}^{2}\\
		\geq &\left(\mu +\frac{1}{2\tau_{k}}\right)\norm{x^{*} -x^{k+1}}^{2}+\left( 
		\nu +\frac{1}{2 \sigma_{k}}\right) \norm{y^{*} -y^{k+1}}^{2}
		+ \frac{1}{2\tau_{k}} \norm{x^{k} -x^{k+1}}^{2} 
		+ \frac{1}{2 \sigma_{k}} \norm{y^{k} -y^{k+1}}^{2}\\
		&+\innp{K(x^{k+1}-\bar{x}^{k}),y^{k+1}-y^{*}}
		-\innp{K(x^{k+1}-x^{*}),y^{k+1}-\bar{y}^{k+1}}.
	\end{align*}
\end{lemma}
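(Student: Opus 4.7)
The plan is to combine the general inequality from \cref{lemma:partialinequalities:sum} with the strong convex-concave lower bound given in \cref{lemma:partialstronglyconvex:f}. Intuitively, \cref{lemma:partialinequalities:sum} provides the basic per-iteration descent but only yields coefficients of the form $\frac{\mu}{2}+\frac{1}{2\tau_k}$ and $\frac{\nu}{2}+\frac{1}{2\sigma_k}$; the saddle-point property must supply the extra $\frac{\mu}{2}\|x^{k+1}-x^*\|^2+\frac{\nu}{2}\|y^{k+1}-y^*\|^2$ needed to promote these to $\mu+\frac{1}{2\tau_k}$ and $\nu+\frac{1}{2\sigma_k}$.

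Concretely, I would first specialize \cref{lemma:partialinequalities:sum} to $x=x^*$, $y=y^*$. This produces an inequality whose right-hand side contains the block
\[
\innp{K\bar{x}^{k},y^*-y^{k+1}}-\innp{\bar{y}^{k+1},K(x^*-x^{k+1})}+g(x^{k+1})-h(y^*)+h(y^{k+1})-g(x^*).
\]
Next, I would apply the identity from \cref{lemma:partialinequalities:fK} (with $x=x^*$, $y=y^*$) to rewrite that block as
\[
f(x^{k+1},y^*)-f(x^*,y^{k+1})+\innp{K(x^{k+1}-\bar{x}^{k}),y^{k+1}-y^*}-\innp{K(x^{k+1}-x^*),y^{k+1}-\bar{y}^{k+1}}.
\]

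Then I would invoke \cref{lemma:partialstronglyconvex:f}, which gives
\[
f(x^{k+1},y^*)-f(x^*,y^{k+1})\;\geq\;\frac{\mu}{2}\norm{x^{k+1}-x^*}^{2}+\frac{\nu}{2}\norm{y^{k+1}-y^*}^{2}.
\]
Substituting this lower bound back into the specialized form of \cref{lemma:partialinequalities:sum} contributes an extra $\frac{\mu}{2}\norm{x^*-x^{k+1}}^{2}$ and $\frac{\nu}{2}\norm{y^*-y^{k+1}}^{2}$, which combine with the existing $\frac{\mu}{2}$ and $\frac{\nu}{2}$ terms to upgrade the coefficients to $\mu+\frac{1}{2\tau_k}$ and $\nu+\frac{1}{2\sigma_k}$ as required. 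The remaining two cross terms involving $K$ are already present in the statement, so no additional manipulation is needed.

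There is no real technical obstacle here; the proof is essentially a bookkeeping exercise that carefully chains three already-established facts. The one place to be careful is to keep the correct sign conventions when converting $\innp{K\bar{x}^{k},y^*-y^{k+1}}$ and $\innp{\bar{y}^{k+1},K(x^*-x^{k+1})}$ into the asymmetric forms $\innp{K(x^{k+1}-\bar{x}^{k}),y^{k+1}-y^*}$ and $\innp{K(x^{k+1}-x^*),y^{k+1}-\bar{y}^{k+1}}$ that appear in the conclusion, and to verify that the strong-convexity contribution indeed doubles the modulus coefficients rather than leaving them unchanged.
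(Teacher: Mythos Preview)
Your proposal is correct and follows essentially the same route as the paper: specialize \cref{lemma:partialinequalities}\cref{lemma:partialinequalities:sum} to $(x,y)=(x^{*},y^{*})$, rewrite the mixed block via \cref{lemma:partialinequalities}\cref{lemma:partialinequalities:fK}, and then apply \cref{lemma:partialstronglyconvex}\cref{lemma:partialstronglyconvex:f} with $(x,y)=(x^{k+1},y^{k+1})$ to gain the extra $\tfrac{\mu}{2}\norm{x^{k+1}-x^{*}}^{2}+\tfrac{\nu}{2}\norm{y^{k+1}-y^{*}}^{2}$ that doubles the coefficients. The paper's proof is exactly this three-step chain.
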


\begin{proof}
	Applying \cref{lemma:partialinequalities:sum} and \cref{lemma:partialinequalities:fK}
	in \cref{lemma:partialinequalities},
	respectively in the following first inequality and the first equality below 
	with $(x,y)=(x^{*},y^{*})$, and employing 
 \cref{lemma:partialstronglyconvex}\cref{lemma:partialstronglyconvex:f}
 with $(x,y) =(x^{k+1},y^{k+1})$
  in the second inequality below, 
	we observe that 
	\begin{align*}
		&\frac{1}{2 \tau_{k}} \norm{x^{*} -x^{k}}^{2} +\frac{1}{2\sigma_{k}} \norm{y^{*}
			-y^{k}}^{2}\\
		\geq &\left(\frac{\mu}{2} +\frac{1}{2\tau_{k}}\right)\norm{x^{*} -x^{k+1}}^{2}+\left( 
		\frac{\nu}{2} +\frac{1}{2 \sigma_{k}}\right) \norm{y^{*} -y^{k+1}}^{2}
		+ \frac{1}{2\tau_{k}} \norm{x^{k} -x^{k+1}}^{2} 
		+ \frac{1}{2 \sigma_{k}} \norm{y^{k} -y^{k+1}}^{2}\\
		&+\innp{K \bar{x}^{k}, y^{*}-y^{k+1}}-\innp{\bar{y}^{k+1},
			K(x^{*}-x^{k+1})}+g(x^{k+1})
		-h(y^{*}) +h(y^{k+1})-g(x^{*})\\
		= & \left(\frac{\mu}{2} +\frac{1}{2\tau_{k}}\right)\norm{x^{*} -x^{k+1}}^{2}+\left( 
		\frac{\nu}{2} +\frac{1}{2 \sigma_{k}}\right) \norm{y^{*} -y^{k+1}}^{2}
		+ \frac{1}{2\tau_{k}} \norm{x^{k} -x^{k+1}}^{2} 
		+ \frac{1}{2 \sigma_{k}} \norm{y^{k} -y^{k+1}}^{2}\\
		&+f(x^{k+1},y^{*}) -f(x^{*},y^{k+1})+\innp{K(x^{k+1}-\bar{x}^{k}),y^{k+1}-y^{*}}
		-\innp{K(x^{k+1}-x^{*}),y^{k+1}-\bar{y}^{k+1}}\\
		\geq &\left(\mu +\frac{1}{2\tau_{k}}\right)\norm{x^{*} -x^{k+1}}^{2}+\left( 
		\nu +\frac{1}{2 \sigma_{k}}\right) \norm{y^{*} -y^{k+1}}^{2}
		+ \frac{1}{2\tau_{k}} \norm{x^{k} -x^{k+1}}^{2} 
		+ \frac{1}{2 \sigma_{k}} \norm{y^{k} -y^{k+1}}^{2}\\
		&+\innp{K(x^{k+1}-\bar{x}^{k}),y^{k+1}-y^{*}}
		-\innp{K(x^{k+1}-x^{*}),y^{k+1}-\bar{y}^{k+1}}.
	\end{align*}
\end{proof}

\subsection{Applications of Cauchy-Schwarz Inequality}
To derive results in this subsection, we mainly employ the Cauchy-Schwarz inequality
and properties of the norm of the operator and its adjoint in various ways.
Although the technique is fundamental and well-known, 
the results play an essential role to establish
our linear convergence results in the following sections. 
As explained in \cref{remark:ineqaulities} below, our techniques of proof in 
\Cref{section:linearconvergeIterations,section:linearconvergf} below together with
inequalities presented in \cref{remark:ineqaulities} can actually deduce  
many other similar linear convergence results  illustrated the following 
\Cref{section:linearconvergeIterations,section:linearconvergf}.

\begin{lemma}\label{lemma:inequalityCS}
	Let $k$ be in $\mathbf{N}$ and 
	let $(x,y)$ be in $\mathcal{H}_{1} \times \mathcal{H}_{2}$. 
	Let $\xi$, $\eta_{1}$, $\eta_{2}$, $\eta_{3}$, and $\eta_{4}$ be in $\mathbf{R}_{++}$.
	The following results hold. 
	\begin{enumerate}
		\item \label{lemma:inequalityCS:K}
		We have that 
			\begin{align*}
			&\innp{K(x^{k+1}-\bar{x}^{k}),y^{k+1}-y}
			-\innp{K(x^{k+1}-x),y^{k+1}-\bar{y}^{k+1}}\\
			\geq &\innp{K(x^{k+1}-x^{k}),y^{k+1}-y}
			-\xi \innp{K(x^{k}-x^{k-1}),y^{k}-y}\\
			&-\frac{ \norm{K}}{2}\left(\eta_{1}\xi^{2}\norm{x^{k}-x^{k-1}}^{2}
			+\frac{\norm{y^{k+1}-y^{k} }}{\eta_{1}}^{2}\right) \\
			&-\frac{\norm{K}}{2} \left( \eta_{2} \left(\xi -\alpha_{k-1} 
			\right)^{2}\norm{y^{k+1}-y}^{2} 
			+\frac{\norm{x^{k}-x^{k-1}}^{2}}{\eta_{2}}
			\right)\\
			&-\frac{\norm{K}}{2} \left( \eta_{3}\beta_{k}^{2}\norm{x^{k+1}-x}^{2} 
			+\frac{\norm{y^{k+1}-y^{k}}^{2} }{\eta_{3}}  \right).
		\end{align*}
	
		\item \label{lemma:inequalityCS:Ksquare}
	We have that 
	\begin{align*}
		&\innp{K(x^{k+1}-\bar{x}^{k}),y^{k+1}-y}
		-\innp{K(x^{k+1}-x),y^{k+1}-\bar{y}^{k+1}}\\
		\geq &\innp{K(x^{k+1}-x^{k}),y^{k+1}-y}
		-\xi \innp{K(x^{k}-x^{k-1}),y^{k}-y}\\
		&-\frac{1}{2}\left(\eta_{1} \norm{K}^{2}\xi^{2}\norm{x^{k}-x^{k-1}}^{2}
		+\frac{\norm{y^{k+1}-y^{k} }}{\eta_{1}}^{2}\right) \\
		&-\frac{1}{2} \left( \eta_{2} \norm{K}^{2}\left(\xi -\alpha_{k-1} 
		\right)^{2}\norm{y^{k+1}-y}^{2} 
		+\frac{\norm{x^{k}-x^{k-1}}^{2}}{\eta_{2}}
		\right)\\
		&-\frac{1}{2} \left( \eta_{3}\norm{K}^{2}\beta_{k}^{2}\norm{x^{k+1}-x}^{2} 
		+\frac{\norm{y^{k+1}-y^{k}}^{2} }{\eta_{3}}  \right).
	\end{align*}
\item \label{lemma:inequalityCS:Ksquareeta4}
We have that 
\begin{align*}
	 &\abs{\innp{K(x^{k+1}-x^{k}),y^{k+1}-y}}\\
	 \leq & \frac{1}{2} \left(\eta_{4} \norm{K}^{2}\norm{y^{k+1}-y}^{2}
	  +\frac{1}{\eta_{4}}\norm{x^{k+1}-x^{k}}^{2} \right).
\end{align*}
	\end{enumerate}
\end{lemma}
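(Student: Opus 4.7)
The plan is to exploit the extrapolation formulas from the algorithm to split the two mixed inner products on the left-hand side into a ``clean'' pair of terms (the ones appearing on the right) plus three ``residual'' inner products, and then to dominate each residual via the Cauchy--Schwarz inequality together with Young's inequality $ab\le \tfrac12(\eta a^2 + b^2/\eta)$.

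For parts \cref{lemma:inequalityCS:K} and \cref{lemma:inequalityCS:Ksquare}, I first substitute $\bar{x}^{k}=x^{k}+\alpha_{k-1}(x^{k}-x^{k-1})$ and $\bar{y}^{k+1}=y^{k+1}+\beta_{k}(y^{k+1}-y^{k})$ from \cref{eq:fact:algorithmsymplity:barx,eq:fact:algorithmsymplity:bary}. This gives
\begin{align*}
&\innp{K(x^{k+1}-\bar{x}^{k}),y^{k+1}-y}-\innp{K(x^{k+1}-x),y^{k+1}-\bar{y}^{k+1}}\\
=&\innp{K(x^{k+1}-x^{k}),y^{k+1}-y}-\alpha_{k-1}\innp{K(x^{k}-x^{k-1}),y^{k+1}-y}+\beta_{k}\innp{K(x^{k+1}-x),y^{k+1}-y^{k}}.
\end{align*}
The key algebraic trick is to rewrite the $\alpha_{k-1}$-term so that $-\xi\innp{K(x^{k}-x^{k-1}),y^{k}-y}$ appears explicitly; namely, using $y^{k+1}-y=(y^{k+1}-y^{k})+(y^{k}-y)$ I decompose
\begin{align*}
-\alpha_{k-1}\innp{K(x^{k}-x^{k-1}),y^{k+1}-y}
=-\xi\innp{K(x^{k}-x^{k-1}),y^{k}-y}-\xi\innp{K(x^{k}-x^{k-1}),y^{k+1}-y^{k}}\\
+(\xi-\alpha_{k-1})\innp{K(x^{k}-x^{k-1}),y^{k+1}-y}.
\end{align*}
Once this identity is in place, only three residual inner products remain to be controlled, and each one is a product of exactly two factors of the form $\|K\|\cdot\|\cdot\|\cdot\|\cdot\|$.

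Next I bound these residuals with Cauchy--Schwarz (and the operator bound $\|K\cdot\|\le\|K\|\|\cdot\|$) followed by Young's inequality. Concretely, for the term involving $\eta_{1}$ I write
\begin{align*}
\bigl|\xi\innp{K(x^{k}-x^{k-1}),y^{k+1}-y^{k}}\bigr|\le \|K\|\xi\|x^{k}-x^{k-1}\|\cdot\|y^{k+1}-y^{k}\|,
\end{align*}
and for part \cref{lemma:inequalityCS:K} apply Young as $ab\le\tfrac{1}{2}(\eta_{1}a^{2}+b^{2}/\eta_{1})$ with $a=\xi\|x^{k}-x^{k-1}\|$ and $b=\|y^{k+1}-y^{k}\|$, extracting one factor of $\|K\|$ outside; for part \cref{lemma:inequalityCS:Ksquare} I instead take $a=\|K\|\xi\|x^{k}-x^{k-1}\|$ and $b=\|y^{k+1}-y^{k}\|$, so that $\|K\|^{2}$ is absorbed into the first summand. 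The residuals involving $\eta_{2}$ (with $(\xi-\alpha_{k-1})\|y^{k+1}-y\|$ and $\|x^{k}-x^{k-1}\|$) and $\eta_{3}$ (with $\beta_{k}\|x^{k+1}-x\|$ and $\|y^{k+1}-y^{k}\|$) are handled identically. Summing the three bounds with the correct signs yields the two claimed inequalities.

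Part \cref{lemma:inequalityCS:Ksquareeta4} is a direct single application of the same principle: Cauchy--Schwarz gives $|\innp{K(x^{k+1}-x^{k}),y^{k+1}-y}|\le\|K\|\|x^{k+1}-x^{k}\|\|y^{k+1}-y\|$, and Young with splitting $a=\|K\|\|y^{k+1}-y\|$, $b=\|x^{k+1}-x^{k}\|$ and parameter $\eta_{4}$ produces exactly the desired right-hand side. There is no real obstacle here, only bookkeeping; the one step that requires care is the add-and-subtract identity for the $\alpha_{k-1}$-term in parts \cref{lemma:inequalityCS:K} and \cref{lemma:inequalityCS:Ksquare}, because matching the coefficients $\xi$ and $\xi-\alpha_{k-1}$ exactly is what forces the particular form of the residual $(\xi-\alpha_{k-1})^{2}$ that appears in the $\eta_{2}$ bound.
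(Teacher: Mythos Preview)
Your proposal is correct and follows essentially the same approach as the paper: you perform the same add-and-subtract decomposition of the $\alpha_{k-1}$-term to isolate $-\xi\innp{K(x^{k}-x^{k-1}),y^{k}-y}$, and then bound the three residual inner products via Cauchy--Schwarz together with Young's inequality, distributing the factor $\norm{K}$ (or $\norm{K}^{2}$) appropriately. The only cosmetic difference is that the paper invokes the adjoint $K^{*}$ and $\norm{K}=\norm{K^{*}}$ when it wants $\norm{K}^{2}$ to land on the $y$-side (for the $\eta_{2}$-term in \cref{lemma:inequalityCS:Ksquare} and in \cref{lemma:inequalityCS:Ksquareeta4}), but since the scalar bound $\norm{K}\,\norm{u}\,\norm{v}$ is symmetric in $u,v$ your grouping in Young achieves the same effect without that detour.
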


\begin{proof}
	\cref{lemma:inequalityCS:K}$\&$\cref{lemma:inequalityCS:Ksquare}:
	Recall from \cref{eq:fact:algorithmsymplity:barx} and  
	\cref{eq:fact:algorithmsymplity:bary} that
	$(\forall k \in \mathbf{N})$ 
	$\bar{x}^{k+1} = x^{k+1} + \alpha_{k} (x^{k+1} -x^{k})$ 
	and  $ \bar{y}^{k+1} = y^{k+1} + \beta_{k} (y^{k+1} -y^{k})$. 
	Hence, we know that 
	\begin{subequations}\label{eq:lemma:inequalityCS:innp}
		\begin{align}
			 &\innp{K(x^{k+1}-\bar{x}^{k}),y^{k+1}-y}
			 -\innp{K(x^{k+1}-x),y^{k+1}-\bar{y}^{k+1}}\\
			 =&\innp{K(x^{k+1}-x^{k}),y^{k+1}-y}
			 -\alpha_{k-1}\innp{K(x^{k}-x^{k-1}),y^{k+1}-y}
			 +\beta_{k} \innp{K(x^{k+1}-x),y^{k+1}-y^{k}}\\
			 =&\innp{K(x^{k+1}-x^{k}),y^{k+1}-y}
			 -\xi \innp{K(x^{k}-x^{k-1}),y^{k+1}-y}
			 +\xi \innp{K(x^{k}-x^{k-1}),y^{k+1}-y}\\
			 &-\alpha_{k-1}\innp{K(x^{k}-x^{k-1}),y^{k+1}-y}
			 +\beta_{k} \innp{K(x^{k+1}-x),y^{k+1}-y^{k}}\\
			 =&\innp{K(x^{k+1}-x^{k}),y^{k+1}-y}
			 -\xi \innp{K(x^{k}-x^{k-1}),y^{k}-y}
			 -\xi \innp{K(x^{k}-x^{k-1}),y^{k+1}-y^{k}}\\
			 &+\left(\xi -\alpha_{k-1} \right) \innp{K(x^{k}-x^{k-1}),y^{k+1}-y}
			 +\beta_{k} \innp{K(x^{k+1}-x),y^{k+1}-y^{k}}.
		\end{align}
	\end{subequations}

Note that 
\begin{subequations}\label{eq:lemma:inequalityCS:K}
	\begin{align}
		&
		-\xi \innp{K(x^{k}-x^{k-1}),y^{k+1}-y^{k}} 
		+\left(\xi -\alpha_{k-1} \right) \innp{K(x^{k}-x^{k-1}),y^{k+1}-y}\\
		&+\beta_{k} \innp{K(x^{k+1}-x),y^{k+1}-y^{k}}\\
		\geq 
		&-\frac{ \norm{K}}{2}\left(\eta_{1}\xi^{2}\norm{x^{k}-x^{k-1}}^{2}
		+\frac{\norm{y^{k+1}-y^{k} }}{\eta_{1}}^{2}\right) \\
		&-\frac{\norm{K}}{2} \left( \eta_{2} \left(\xi -\alpha_{k-1} 
		\right)^{2}\norm{y^{k+1}-y}^{2} 
		+\frac{\norm{x^{k}-x^{k-1}}^{2}}{\eta_{2}}
		\right)\\
		&-\frac{\norm{K}}{2} \left( \eta_{3}\beta_{k}^{2}\norm{x^{k+1}-x}^{2} 
		+\frac{\norm{y^{k+1}-y^{k}}^{2} }{\eta_{3}}  \right),
	\end{align}
\end{subequations}
where, in the last inequality, we use  the Cauchy-Schwarz inequality, 
\cref{eq:definenormK},  
and the fact that 
\begin{align}\label{eq:lemma:inequalityCS:ab}
	(\forall (a,b) \in \mathbf{R}_{+}) (\forall \kappa \in \mathbf{R}_{++}) \quad 
	2ab = 2 (\sqrt{\kappa}a ) \left(\frac{b}{\sqrt{\kappa}}\right) 
	\leq \kappa a^{2} + \frac{b^{2}}{\kappa}.
\end{align}
Combine \cref{eq:lemma:inequalityCS:innp} and \cref{eq:lemma:inequalityCS:K}
to derive the required result in \cref{lemma:inequalityCS:K}. 

Applying the Cauchy-Schwarz inequality, 
\cref{eq:definenormK},  and the fact \cref{eq:lemma:inequalityCS:ab}, 
we have that 
\begin{subequations}\label{eq:lemma:inequalityCS:Ksquare}
	\begin{align}
		&
		-\xi \innp{K(x^{k}-x^{k-1}),y^{k+1}-y^{k}} 
		+\left(\xi -\alpha_{k-1} \right) \innp{K(x^{k}-x^{k-1}),y^{k+1}-y}\\
		&+\beta_{k} \innp{K(x^{k+1}-x),y^{k+1}-y^{k}}\\
\geq &-\frac{1}{2}\left(\eta_{1} \norm{K}^{2}\xi^{2}\norm{x^{k}-x^{k-1}}^{2}
+\frac{\norm{y^{k+1}-y^{k} }}{\eta_{1}}^{2}\right) \\
&-\frac{1}{2} \left( \eta_{2} \norm{K}^{2}\left(\xi -\alpha_{k-1} 
\right)^{2}\norm{y^{k+1}-y}^{2} 
+\frac{\norm{x^{k}-x^{k-1}}^{2}}{\eta_{2}}
\right)\\
&-\frac{1}{2} \left( \eta_{3}\norm{K}^{2}\beta_{k}^{2}\norm{x^{k+1}-x}^{2} 
+\frac{\norm{y^{k+1}-y^{k}}^{2} }{\eta_{3}}  \right),	 
	\end{align}
\end{subequations}
where, to get the inequality
\begin{align*}
	&\left(\xi -\alpha_{k-1} \right) \innp{K(x^{k}-x^{k-1}),y^{k+1}-y}\\
	 \geq
	&-\frac{1}{2} \left( \eta_{2} \norm{K}^{2}\left(\xi -\alpha_{k-1} 
	\right)^{2}\norm{y^{k+1}-y}^{2} 
	+\frac{\norm{x^{k}-x^{k-1}}^{2}}{\eta_{2}}
	\right),
\end{align*}  
we adopt the definition of the adjoint  $K^{*}$ of  $K$
and also the fact $\norm{K} =\norm{K^{*}}$ from \cite[Fact~2.25(ii)]{BC2017}.		
Combining \cref{eq:lemma:inequalityCS:innp} and 
 \cref{eq:lemma:inequalityCS:Ksquare}, we obtain the desired result in 
 \cref{lemma:inequalityCS:Ksquare}. 
 
 \cref{lemma:inequalityCS:Ksquareeta4}: Applying the Cauchy-Schwarz inequality,
 the definition of the adjoint  $K^{*}$ of  $K$,
 and also the fact $\norm{K} =\norm{K^{*}}$ from \cite[Fact~2.25(ii)]{BC2017} again, 
 we conclude the required result. 
\end{proof}

\begin{remark}\label{remark:ineqaulities}
	Let's revisit \cref{lemma:inequalityCS}.
	It is clear that using the techniques in the proof of
	 \cref{lemma:inequalityCS}\cref{lemma:inequalityCS:K}$\&$\cref{lemma:inequalityCS:Ksquare},
%
	 we have at least the following results, 
	 \begin{enumerate}
	 	\item $-\xi \innp{K(x^{k}-x^{k-1}),y^{k+1}-y^{k}} \geq 
	 	-\frac{\xi \norm{K}}{2}\left(\eta_{1}\norm{x^{k}-x^{k-1}}^{2}
	 	+\frac{\norm{y^{k+1}-y^{k} }}{\eta_{1}}^{2}\right)$,\\
	 	$-\xi \innp{K(x^{k}-x^{k-1}),y^{k+1}-y^{k}} \geq 
	 	-\frac{ \norm{K}}{2}\left(\eta_{1}\xi^{2}\norm{x^{k}-x^{k-1}}^{2}
	 	+\frac{\norm{y^{k+1}-y^{k} }}{\eta_{1}}^{2}\right)$,\\
	 	$-\xi \innp{K(x^{k}-x^{k-1}),y^{k+1}-y^{k}} \geq 
	 	-\frac{ \norm{K}}{2}\left(\eta_{1}\norm{x^{k}-x^{k-1}}^{2}
	 	+\xi^{2} \frac{\norm{y^{k+1}-y^{k} }}{\eta_{1}}^{2}\right)$,\\
	 	$-\xi \innp{K(x^{k}-x^{k-1}),y^{k+1}-y^{k}} \geq 
	 	-\frac{1}{2}\left(\eta_{1} \norm{K}^{2}\xi^{2}\norm{x^{k}-x^{k-1}}^{2}
	 	+\frac{\norm{y^{k+1}-y^{k} }}{\eta_{1}}^{2}\right)$,\\
	 	and 
	 	$-\xi \innp{K(x^{k}-x^{k-1}),y^{k+1}-y^{k}} \geq 
	 	-\frac{1}{2}\left(\eta_{1} \norm{x^{k}-x^{k-1}}^{2}
	 	+\norm{K}^{2}\xi^{2}\frac{\norm{y^{k+1}-y^{k} }}{\eta_{1}}^{2}\right)$.
	 	
	 	\item $\left(\xi -\alpha_{k-1} \right) \innp{K(x^{k}-x^{k-1}),y^{k+1}-y} \geq 
	 	-\frac{\norm{K}}{2} \abs{\xi -\alpha_{k-1} }
	 	\left( \eta_{2} \norm{y^{k+1}-y}^{2} 
	 	+\frac{\norm{x^{k}-x^{k-1}}^{2}}{\eta_{2}}\right)$,\\
	 	$\left(\xi -\alpha_{k-1} \right) \innp{K(x^{k}-x^{k-1}),y^{k+1}-y} \geq 
	 	-\frac{\norm{K}}{2} \left( \eta_{2} \left(\xi -\alpha_{k-1} 
	 	\right)^{2}\norm{y^{k+1}-y}^{2} 
	 	+\frac{\norm{x^{k}-x^{k-1}}^{2}}{\eta_{2}}\right)$,\\
	 	$\left(\xi -\alpha_{k-1} \right) \innp{K(x^{k}-x^{k-1}),y^{k+1}-y} \geq 
	 	-\frac{\norm{K}}{2} \left( \eta_{2} \norm{y^{k+1}-y}^{2} 
	 	+\left(\xi -\alpha_{k-1} \right)^{2}\frac{\norm{x^{k}-x^{k-1}}^{2}}{\eta_{2}}\right)$,\\
	 	$\left(\xi -\alpha_{k-1} \right) \innp{K(x^{k}-x^{k-1}),y^{k+1}-y} \geq 
	 	-\frac{1}{2} \left( \eta_{2} \norm{K}^{2}\left(\xi 
	 	-\alpha_{k-1}\right)^{2}\norm{y^{k+1}-y}^{2} 
	 	+\frac{\norm{x^{k}-x^{k-1}}^{2}}{\eta_{2}}\right)$,\\
	 	and $\left(\xi -\alpha_{k-1} \right) \innp{K(x^{k}-x^{k-1}),y^{k+1}-y} \geq 
	 	-\frac{1}{2} \left( \eta_{2} \norm{y^{k+1}-y}^{2} 
	 	+\norm{K}^{2}\left(\xi -\alpha_{k-1} 
	 	\right)^{2}\frac{\norm{x^{k}-x^{k-1}}^{2}}{\eta_{2}}
	 	\right)$.
	 	
	 	\item $\beta_{k} \innp{K(x^{k+1}-x),y^{k+1}-y^{k}} \geq 
	 	-\frac{\norm{K}}{2}\beta_{k} \left( \eta_{3}\norm{x^{k+1}-x}^{2} 
	 	+\frac{\norm{y^{k+1}-y^{k}}^{2} }{\eta_{3}}  \right)$,\\
	 	$\beta_{k} \innp{K(x^{k+1}-x),y^{k+1}-y^{k}} \geq 
	 	-\frac{\norm{K}}{2} \left( \eta_{3}\beta_{k}^{2}\norm{x^{k+1}-x}^{2} 
	 	+\frac{\norm{y^{k+1}-y^{k}}^{2} }{\eta_{3}}  \right)$,\\
	 	$\beta_{k} \innp{K(x^{k+1}-x),y^{k+1}-y^{k}} \geq 
	 	-\frac{\norm{K}}{2} \left( \eta_{3}\norm{x^{k+1}-x}^{2} 
	 	+\beta_{k}^{2}\frac{\norm{y^{k+1}-y^{k}}^{2} }{\eta_{3}}  \right)$,\\
	 	$\beta_{k} \innp{K(x^{k+1}-x),y^{k+1}-y^{k}} \geq 
	 	-\frac{1}{2} \left( \eta_{3}\norm{K}^{2}\beta_{k}^{2}\norm{x^{k+1}-x}^{2} 
	 	+\frac{\norm{y^{k+1}-y^{k}}^{2} }{\eta_{3}}  \right)$,\\
	 	and $\beta_{k} \innp{K(x^{k+1}-x),y^{k+1}-y^{k}} \geq 
	 	-\frac{1}{2} \left( \eta_{3}\norm{x^{k+1}-x}^{2} 
	 	+\norm{K}^{2}\beta_{k}^{2}\frac{\norm{y^{k+1}-y^{k}}^{2} }{\eta_{3}}  \right)$.
	 \end{enumerate}
 In addition, we can also separate the coefficients $ \norm{K}^{2}\xi^{2}$, 
 $ \norm{K}^{2}\left(\xi -\alpha_{k-1}  \right)^{2}$, and 
 $\norm{K}^{2}\beta_{k}^{2}$, respectively, in the related inequality and 
  get six more inequalities:
  
$-\xi \innp{K(x^{k}-x^{k-1}),y^{k+1}-y^{k}} \geq 
-\frac{1}{2}\left(\eta_{1} \norm{K}^{2}\norm{x^{k}-x^{k-1}}^{2}
+\xi^{2}\frac{\norm{y^{k+1}-y^{k} }}{\eta_{1}}^{2}\right)$,

$-\xi \innp{K(x^{k}-x^{k-1}),y^{k+1}-y^{k}} \geq 
-\frac{1}{2}\left(\eta_{1} \xi^{2}\norm{x^{k}-x^{k-1}}^{2}
+\norm{K}^{2}\frac{\norm{y^{k+1}-y^{k} }}{\eta_{1}}^{2}\right)$,
and so on.

Therefore, by various combination of inequalities we get, 
we potentially have at least  $7^{3}$  lower bounds of 
$\innp{K(x^{k+1}-\bar{x}^{k}),y^{k+1}-y}
-\innp{K(x^{k+1}-x),y^{k+1}-\bar{y}^{k+1}}$
 including the two lower bounds
 presented in 
 \cref{lemma:inequalityCS}\cref{lemma:inequalityCS:K}$\&$\cref{lemma:inequalityCS:Ksquare}.
 For simplicity, we work on only the two inequalities given in 
 \cref{lemma:inequalityCS}\cref{lemma:inequalityCS:K}$\&$\cref{lemma:inequalityCS:Ksquare},
  in this work. 
\end{remark}

\section{Linear Convergence of Iteration Points} 
\label{section:linearconvergeIterations}
In this section, we present linear convergence of the sequence of iterations generated
by the alternating proximal mapping method updated as
 \cref{eq:fact:algorithmsymplity}.
 
 The following result will be used twice in our proofs below. 
 \begin{lemma}\label{lemma:ksum}
 Let $\xi$ be in $\mathbf{R}_{++}$.	Suppose that for every $k$ in $\mathbf{N}$,  
 	\begin{subequations}\label{eq:lemma:ksum}
 		\begin{align}
 			&\frac{1}{2 \tau_{k}} \norm{x^{*} -x^{k}}^{2} +\frac{1}{2\sigma_{k}} \norm{y^{*} 
 				-y^{k}}^{2}\\
 			\geq &\frac{1}{2\tau_{k+1}\xi}\norm{x^{*} -x^{k+1}}^{2}
 			+\frac{1}{2\sigma_{k+1}\xi}\norm{y^{*} -y^{k+1}}^{2}
 			+ \frac{1}{2\tau_{k}} \norm{x^{k} -x^{k+1}}^{2}
 			- \frac{\xi}{2\tau_{k-1}} \norm{x^{k} -x^{k+1}}^{2}\\
 			&+\innp{K(x^{k+1}-x^{k}),y^{k+1}-y^{*}}
 			-\xi \innp{K(x^{k}-x^{k-1}),y^{k}-y^{*}}.
 		\end{align}
 	\end{subequations}
 	Let $\eta_{4}$ be in $\mathbf{R}_{++}$ such that $(\forall k \in \mathbf{N})$
 	$\frac{1}{\tau_{k}} -\frac{1}{\eta_{4}} \geq 0$, 
 	and  let $N$ be in $\mathbf{N}$. Then
 \begin{align*}
 	&\xi^{N+1} \left( \frac{1}{  \tau_{0}} \norm{x^{*} -x^{0}}^{2} +\frac{1}{ \sigma_{0}} 
 	\norm{y^{*} -y^{0}}^{2} \right)\\	
 	\geq  & \frac{1}{ \tau_{N+1}}\norm{x^{*} -x^{N+1}}^{2}
 	+ \left(\frac{1}{\sigma_{N+1}} -\xi \eta_{4} \norm{K}^{2} \right)\norm{y^{*} 
 		-y^{N+1}}^{2}.
 \end{align*}
 \end{lemma}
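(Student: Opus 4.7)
The plan is to establish the claim by a weighted telescoping of the one-step recursion \cref{eq:lemma:ksum}.

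First I would multiply the inequality for index $k$ by the positive weight $2\xi^{N+1-k}$ and sum over $k=0,1,\ldots,N$. The weight is dictated by the factor $1/\xi$ appearing on the RHS of \cref{eq:lemma:ksum}: under this scaling, $\norm{x^{*}-x^{k+1}}^{2}$ acquires coefficient $\xi^{N-k}/\tau_{k+1}$ on the RHS at step $k$, which is exactly the coefficient $\xi^{N+1-(k+1)}/\tau_{k+1}$ it carries on the LHS at step $k+1$ (and similarly for the $y$-terms). Thus, after summation, the interior $\norm{x^{*}-x^{k}}^{2}$ and $\norm{y^{*}-y^{k}}^{2}$ terms cancel pairwise, leaving only the boundary contributions $\tfrac{\xi^{N+1}}{\tau_{0}}\norm{x^{*}-x^{0}}^{2}+\tfrac{\xi^{N+1}}{\sigma_{0}}\norm{y^{*}-y^{0}}^{2}$ on the LHS and $\tfrac{1}{\tau_{N+1}}\norm{x^{*}-x^{N+1}}^{2}+\tfrac{1}{\sigma_{N+1}}\norm{y^{*}-y^{N+1}}^{2}$ on the RHS.

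The same mechanism handles the pair of $K$-inner products in \cref{eq:lemma:ksum}: performing the index shift $j=k-1$ in the negative summand, the term $2\xi^{N+1-k}\innp{K(x^{k+1}-x^{k}),y^{k+1}-y^{*}}$ at step $k$ cancels the corresponding contribution at step $k+1$, and the $j=-1$ boundary term vanishes because of the initialization $x^{-1}=x^{0}$. Only $2\xi\innp{K(x^{N+1}-x^{N}),y^{N+1}-y^{*}}$ survives on the RHS. An entirely analogous telescoping of the norm-of-difference terms (shifting the index in the summand carrying the $\xi/\tau_{k-1}$ coefficient) leaves a single boundary term of the form $\tfrac{\xi}{\tau_{N}}\norm{x^{N}-x^{N+1}}^{2}$ on the RHS, again thanks to $x^{-1}=x^{0}$.

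It remains to control the two surviving obstructions. Cauchy--Schwarz combined with the adjoint identity $\norm{K}=\norm{K^{*}}$ and the Young-type inequality \cref{eq:lemma:inequalityCS:ab} with parameter $\eta_{4}$ yields
\[
2\xi\,\innp{K(x^{N+1}-x^{N}),y^{N+1}-y^{*}} \;\geq\; -\xi\eta_{4}\norm{K}^{2}\norm{y^{*}-y^{N+1}}^{2} - \frac{\xi}{\eta_{4}}\norm{x^{N}-x^{N+1}}^{2}.
\]
Substituting this back and collecting the two $\norm{x^{N}-x^{N+1}}^{2}$ contributions produces the coefficient $\xi\bigl(\tfrac{1}{\tau_{N}}-\tfrac{1}{\eta_{4}}\bigr)$, which is nonnegative by the standing hypothesis $\tfrac{1}{\tau_{k}}-\tfrac{1}{\eta_{4}}\geq 0$. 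Dropping this nonnegative term and rearranging yields precisely the claimed inequality.

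I expect the main obstacle to be purely bookkeeping: one has to track the powers of $\xi$ across three separate telescopings and verify that the boundary contributions at $k=-1$ (which vanish via $x^{-1}=x^{0}$) dovetail with those at $k=N$, which the hypothesis on $\eta_{4}$ is precisely designed to absorb. Everything else is routine summation.
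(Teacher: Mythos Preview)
Your proposal is correct and follows essentially the same approach as the paper: a weighted telescoping of \cref{eq:lemma:ksum}, elimination of the $k=-1$ boundary terms via $x^{-1}=x^{0}$, and a Cauchy--Schwarz/Young estimate (the paper's \cref{lemma:inequalityCS}\cref{lemma:inequalityCS:Ksquareeta4}) to absorb the surviving cross term into the $\norm{y^{*}-y^{N+1}}^{2}$ coefficient, after which the hypothesis on $\eta_{4}$ lets one drop the residual $\norm{x^{N}-x^{N+1}}^{2}$ term. The only cosmetic difference is that the paper multiplies by $1/\xi^{k}$ and then clears the factor $\xi^{N+1}$ at the end, whereas you multiply by $2\xi^{N+1-k}$ from the outset; the two weightings differ by a global positive constant and lead to identical telescoping.
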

 
 \begin{proof}
 	For every $k$ in $\mathbf{N}$, multiply both sides of 
 	the inequality  \cref{eq:lemma:ksum}  by $\frac{1}{\xi^{k}}$ 
 	to derive that
 	\begin{align*}
 		&\frac{1}{2 \tau_{k}\xi^{k}} \norm{x^{*} -x^{k}}^{2} +\frac{1}{2\sigma_{k}\xi^{k}} 
 		\norm{y^{*} -y^{k}}^{2}\\
 		\geq &\frac{1}{2\tau_{k+1}\xi^{k+1}}\norm{x^{*} -x^{k+1}}^{2}
 		+\frac{1}{2\sigma_{k+1}\xi^{k+1}}\norm{y^{*} -y^{k+1}}^{2}
 		+ \frac{1}{2\tau_{k}\xi^{k}} \norm{x^{k} -x^{k+1}}^{2}
 		- \frac{1}{2\tau_{k-1}\xi^{k-1}} \norm{x^{k} -x^{k-1}}^{2}\\
 		&+\frac{1}{\xi^{k}}\innp{K(x^{k+1}-x^{k}),y^{k+1}-y^{*}}
 		-\frac{1}{\xi^{k-1}} \innp{K(x^{k}-x^{k-1}),y^{k}-y^{*}}.
 	\end{align*}
 	Note that $x^{-1} =x^{0}$.	Add the inequality above from $k=0$ to $k=N$ to deduce 
 	that
 	\begin{align*}
 		&\frac{1}{2 \tau_{0}} \norm{x^{*} -x^{0}}^{2} +\frac{1}{2\sigma_{0}} 
 		\norm{y^{*} -y^{0}}^{2}\\
 		\geq &\frac{1}{2\tau_{N+1}\xi^{N+1}}\norm{x^{*} -x^{N+1}}^{2}
 		+\frac{1}{2\sigma_{N+1}\xi^{N+1}}\norm{y^{*} -y^{N+1}}^{2}
 		+ \frac{1}{2\tau_{N}\xi^{N}} \norm{x^{N} -x^{N+1}}^{2}\\
 		&- \frac{1}{2\tau_{-1}\xi^{-1}} \norm{x^{0} -x^{-1}}^{2}
 		+\frac{1}{\xi^{N}}\innp{K(x^{N+1}-x^{N}),y^{N+1}-y^{*}}
 		-\frac{1}{\xi^{-1}} \innp{K(x^{0}-x^{-1}),y^{0}-y^{*}}\\
 		=&\frac{1}{2\tau_{N+1}\xi^{N+1}}\norm{x^{*} -x^{N+1}}^{2}
 		+\frac{1}{2\sigma_{N+1}\xi^{N+1}}\norm{y^{*} -y^{N+1}}^{2}
 		+ \frac{1}{2\tau_{N}\xi^{N}} \norm{x^{N} -x^{N+1}}^{2}\\
 		&  +\frac{1}{\xi^{N}}\innp{K(x^{N+1}-x^{N}),y^{N+1}-y^{*}}\\
 		\geq &\frac{1}{2\tau_{N+1}\xi^{N+1}}\norm{x^{*} -x^{N+1}}^{2}
 		+\frac{1}{2\sigma_{N+1}\xi^{N+1}}\norm{y^{*} -y^{N+1}}^{2}
 		+ \frac{1}{2\tau_{N}\xi^{N}} \norm{x^{N} -x^{N+1}}^{2}\\
 		&-\frac{1}{2\xi^{N}} \left( \eta_{4}\norm{K}^{2} \norm{ y^{N+1}-y^{*}}^{2} 
 		+ \frac{1}{\eta_{4}}\norm{ x^{N+1}-x^{N}}^{2} \right)\\
 		=&\frac{1}{2\tau_{N+1}\xi^{N+1}}\norm{x^{*} -x^{N+1}}^{2}
 		+\frac{1}{2\xi^{N+1}} \left(\frac{1}{\sigma_{N+1}} -\xi \eta_{4}\norm{K}^{2} 
 		\right)\norm{y^{*} 	-y^{N+1}}^{2}\\
 		&+\frac{1}{2\xi^{N}} \left(\frac{1}{\tau_{N}} -\frac{1}{\eta_{4}}\right) \norm{x^{N} 
 			-x^{N+1}}^{2}\\
 		\geq &  \frac{1}{2\tau_{N+1}\xi^{N+1}}\norm{x^{*} -x^{N+1}}^{2}
 		+\frac{1}{2\xi^{N+1}} \left(\frac{1}{\sigma_{N+1}} -\xi \eta_{4} \norm{K}^{2} 
 		\right)\norm{y^{*} 
 			-y^{N+1}}^{2},
 	\end{align*}	
 where in the second inequality, we use
  \cref{lemma:inequalityCS}\cref{lemma:inequalityCS:Ksquareeta4},
  and in the last inequality, we adopt the assumption 
  $(\forall k \in \mathbf{N})$  $\frac{1}{\tau_{k}} -\frac{1}{\eta_{4}} \geq 0$.
 \end{proof}

 Employing \Cref{lemma:inequalityx*y*,lemma:ksum}, and applying the  inequality in 
   \cref{lemma:inequalityCS}\cref{lemma:inequalityCS:K} (resp.\,in
   \cref{lemma:inequalityCS}\cref{lemma:inequalityCS:Ksquare}),
   we derive the following \cref{prop:inequalityx*y*K} (resp.\,\cref{prop:inequalityx*y*}).
 
\begin{proposition} \label{prop:inequalityx*y*K}
	Let $(x^{*},y^{*})$ be a saddle-point of $f$, 
	and let $\xi$, $\eta_{1}$, $\eta_{2}$, and $\eta_{3}$ be in 
	$\mathbf{R}_{++}$. Suppose that $(\forall k \in \mathbf{N})$
	$ \left(\mu +\frac{1}{2\tau_{k}}  -\frac{\norm{K}}{2} \eta_{3}\beta_{k}^{2} \right) 
	\geq  \frac{1}{2 \tau_{k+1} \xi}$,
	$\nu +\frac{1}{2 \sigma_{k}} -\frac{\eta_{2} \norm{K}\left(\xi -\alpha_{k-1} 
		\right)^{2}}{2} \geq \frac{1}{2\sigma_{k+1}\xi}$,
	$  \norm{K} \left(\eta_{1}\xi^{2}
	+ \frac{1}{\eta_{2}}\right) \leq \frac{\xi}{\tau_{k-1}}$, and
 $\frac{1}{ \sigma_{k}} - \norm{K} \left(\frac{1}{\eta_{1}} 
	+\frac{1}{\eta_{3}} \right) \geq 0$.

	\begin{enumerate}
		\item \label{prop:inequalityx*y*K:k} For every $k$ in $\mathbf{N}$, we have that 
			\begin{align*}
			&\frac{1}{2 \tau_{k}} \norm{x^{*} -x^{k}}^{2} +\frac{1}{2\sigma_{k}} \norm{y^{*} 
				-y^{k}}^{2}\\
		\geq & \frac{1}{2 \tau_{k+1} \xi} \norm{x^{*}-x^{k+1}}^{2}
		+\frac{1}{2 \sigma_{k+1} \xi}	 \norm{y^{*} -y^{k+1}}^{2}  
		+ \frac{1}{2\tau_{k}} \norm{x^{k} -x^{k+1}}^{2} - \frac{\xi}{2\tau_{k-1}} \norm{x^{k} 
			-x^{k-1}}^{2}\\
		&+\innp{K(x^{k+1}-x^{k}),y^{k+1}-y^{*}}
		-\xi \innp{K(x^{k}-x^{k-1}),y^{k}-y^{*}}.
	\end{align*}
		
		\item \label{prop:inequalityx*y*K:sum}  Let  $\eta_{4}$ be in 
		$\mathbf{R}_{++}$ such that $(\forall k \in \mathbf{N})$
		$\frac{1}{\tau_{k}} -\frac{1}{\eta_{4}} \geq 0$, 
		and let $N$ be in $\mathbf{N}$. Then
		\begin{align*}
			&\xi^{N+1} \left( \frac{1}{  \tau_{0}} \norm{x^{*} -x^{0}}^{2} +\frac{1}{ \sigma_{0}} 
			\norm{y^{*} -y^{0}}^{2} \right)\\	
			\geq  & \frac{1}{ \tau_{N+1}}\norm{x^{*} -x^{N+1}}^{2}
			+ \left(\frac{1}{\sigma_{N+1}} -\xi \eta_{4} \norm{K}^{2} \right)\norm{y^{*} 
				-y^{N+1}}^{2}.
		\end{align*}
	\end{enumerate}
\end{proposition}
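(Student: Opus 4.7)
The plan for part \cref{prop:inequalityx*y*K:k} is to start from the lower bound of
$\frac{1}{2\tau_{k}}\norm{x^{*}-x^{k}}^{2} + \frac{1}{2\sigma_{k}}\norm{y^{*}-y^{k}}^{2}$
established in \cref{lemma:inequalityx*y*}, and then replace the residual cross term
$\innp{K(x^{k+1}-\bar{x}^{k}),y^{k+1}-y^{*}} - \innp{K(x^{k+1}-x^{*}),y^{k+1}-\bar{y}^{k+1}}$
by the estimate in \cref{lemma:inequalityCS}\cref{lemma:inequalityCS:K} applied
with $(x,y)=(x^{*},y^{*})$. After this substitution, the inequality collects into four
quadratic groups, corresponding to $\norm{x^{*}-x^{k+1}}^{2}$, $\norm{y^{*}-y^{k+1}}^{2}$,
$\norm{x^{k}-x^{k-1}}^{2}$, and $\norm{y^{k+1}-y^{k}}^{2}$, together with the kept
telescoping inner-product pair
$\innp{K(x^{k+1}-x^{k}),y^{k+1}-y^{*}} - \xi \innp{K(x^{k}-x^{k-1}),y^{k}-y^{*}}$
and the kept term $\frac{1}{2\tau_{k}}\norm{x^{k}-x^{k+1}}^{2}$.

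The key step is then a matching of coefficients with the four hypotheses: the
coefficient of $\norm{x^{*}-x^{k+1}}^{2}$ becomes
$\mu+\frac{1}{2\tau_{k}}-\frac{\norm{K}\eta_{3}\beta_{k}^{2}}{2}$, which is at least
$\frac{1}{2\tau_{k+1}\xi}$ by the first hypothesis; the coefficient of
$\norm{y^{*}-y^{k+1}}^{2}$ becomes $\nu+\frac{1}{2\sigma_{k}}-\frac{\eta_{2}\norm{K}(\xi-\alpha_{k-1})^{2}}{2}$,
which is at least $\frac{1}{2\sigma_{k+1}\xi}$ by the second hypothesis; the
coefficient of $\norm{x^{k}-x^{k-1}}^{2}$ is $-\frac{\norm{K}}{2}\bigl(\eta_{1}\xi^{2}+\frac{1}{\eta_{2}}\bigr)$,
which is at least $-\frac{\xi}{2\tau_{k-1}}$ by the third hypothesis; and the
coefficient of $\norm{y^{k+1}-y^{k}}^{2}$ is
$\frac{1}{2\sigma_{k}}-\frac{\norm{K}}{2}\bigl(\frac{1}{\eta_{1}}+\frac{1}{\eta_{3}}\bigr)$,
which is nonnegative by the fourth hypothesis and can therefore be discarded.
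Rearranging these four matched estimates yields exactly the inequality claimed in
\cref{prop:inequalityx*y*K:k}.

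For part \cref{prop:inequalityx*y*K:sum}, the plan is to invoke \cref{lemma:ksum}
directly. The inequality established in \cref{prop:inequalityx*y*K:k} is precisely
the hypothesis \cref{eq:lemma:ksum} required there, so with the additional
assumption that $\eta_{4}\in\mathbf{R}_{++}$ satisfies
$\frac{1}{\tau_{k}}-\frac{1}{\eta_{4}}\geq 0$ for all $k\in\mathbf{N}$, the conclusion
of \cref{lemma:ksum} gives the desired summed bound without further work.

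The main obstacle is the coefficient bookkeeping in part \cref{prop:inequalityx*y*K:k}:
making sure every contribution from the Cauchy--Schwarz splitting in
\cref{lemma:inequalityCS}\cref{lemma:inequalityCS:K} is correctly matched to the
right group and that each of the four assumed inequalities controls exactly the
intended group. Once this accounting is done cleanly, both parts follow by direct
substitution and an appeal to \cref{lemma:ksum}.
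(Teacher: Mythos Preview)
Your proposal is correct and follows essentially the same approach as the paper's own proof: for part~\cref{prop:inequalityx*y*K:k} you invoke \cref{lemma:inequalityx*y*}, apply \cref{lemma:inequalityCS}\cref{lemma:inequalityCS:K} with $(x,y)=(x^{*},y^{*})$, and match each resulting coefficient against the four hypotheses exactly as the paper does; for part~\cref{prop:inequalityx*y*K:sum} you appeal directly to \cref{lemma:ksum}, which is precisely the paper's argument.
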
	

\begin{proof}
	\cref{prop:inequalityx*y*K:k}: Employing \cref{lemma:inequalityx*y*} in the first 
	inequality below, and 
	applying \cref{lemma:inequalityCS}\cref{lemma:inequalityCS:K} 
	with $(x,y) =(x^{*},y^{*})$, we observe that 
		\begin{align*}
		&\frac{1}{2 \tau_{k}} \norm{x^{*} -x^{k}}^{2} +\frac{1}{2\sigma_{k}} \norm{y^{*} 
			-y^{k}}^{2}\\
		\geq &\left(\mu +\frac{1}{2\tau_{k}}\right)\norm{x^{*} -x^{k+1}}^{2}+\left( 
		\nu +\frac{1}{2 \sigma_{k}}\right) \norm{y^{*} -y^{k+1}}^{2}
		+ \frac{1}{2\tau_{k}} \norm{x^{k} -x^{k+1}}^{2} 
		+ \frac{1}{2 \sigma_{k}} \norm{y^{k} -y^{k+1}}^{2}\\
		&+\innp{K(x^{k+1}-\bar{x}^{k}),y^{k+1}-y^{*}}
		-\innp{K(x^{k+1}-x^{*}),y^{k+1}-\bar{y}^{k+1}}\\
		\geq &\left(\mu +\frac{1}{2\tau_{k}}\right)\norm{x^{*} -x^{k+1}}^{2}+\left( 
		\nu +\frac{1}{2 \sigma_{k}}\right) \norm{y^{*} -y^{k+1}}^{2}
		+ \frac{1}{2\tau_{k}} \norm{x^{k} -x^{k+1}}^{2} 
		+ \frac{1}{2 \sigma_{k}} \norm{y^{k} -y^{k+1}}^{2}\\
		&+\innp{K(x^{k+1}-x^{k}),y^{k+1}-y^{*}}
		-\xi \innp{K(x^{k}-x^{k-1}),y^{k}-y^{*}}\\
		&-\frac{ \norm{K}}{2}\left(\eta_{1}\xi^{2}\norm{x^{k}-x^{k-1}}^{2}
		+\frac{\norm{y^{k+1}-y^{k} }^{2}}{\eta_{1}}\right) \\
		&-\frac{\norm{K}}{2} \left( \eta_{2} \left(\xi -\alpha_{k-1} 
		\right)^{2}\norm{y^{k+1}-y^{*}}^{2} 
		+\frac{\norm{x^{k}-x^{k-1}}^{2}}{\eta_{2}}
		\right)\\
		&-\frac{\norm{K}}{2} \left( \eta_{3}\beta_{k}^{2}\norm{x^{k+1}-x^{*}}^{2} 
		+\frac{\norm{y^{k+1}-y^{k}}^{2} }{\eta_{3}}  \right)\\
=& \left(\mu +\frac{1}{2\tau_{k}}  -\frac{\norm{K}}{2} \eta_{3}\beta_{k}^{2} \right)
\norm{x^{k+1}-x^{*}}^{2}
+\left( \nu +\frac{1}{2 \sigma_{k}} -\frac{\norm{K}}{2}\eta_{2} \left(\xi -\alpha_{k-1} 
\right)^{2} \right) \norm{y^{*} -y^{k+1}}^{2}\\
&+ \frac{1}{2\tau_{k}} \norm{x^{k} -x^{k+1}}^{2} +\innp{K(x^{k+1}-x^{k}),y^{k+1}-y^{*}}
-\xi \innp{K(x^{k}-x^{k-1}),y^{k}-y^{*}}\\
&-\frac{ \norm{K}}{2}\left(\eta_{1}\xi^{2}
+ \frac{1}{\eta_{2}}\right) \norm{x^{k}-x^{k-1}}^{2}
	+ \frac{1}{2} \left( \frac{1}{ \sigma_{k}} - \norm{K} \left(\frac{1}{\eta_{1}} 
	+\frac{1}{\eta_{3}} \right)\right)  \norm{y^{k} -y^{k+1}}^{2}\\
\geq & \frac{1}{2 \tau_{k+1} \xi} \norm{x^{*}-x^{k+1}}^{2}
+\frac{1}{2 \sigma_{k+1} \xi}	 \norm{y^{*} -y^{k+1}}^{2}  
+ \frac{1}{2\tau_{k}} \norm{x^{k} -x^{k+1}}^{2} - \frac{\xi}{2\tau_{k-1}} \norm{x^{k} 
-x^{k-1}}^{2}\\
&+\innp{K(x^{k+1}-x^{k}),y^{k+1}-y^{*}}
-\xi \innp{K(x^{k}-x^{k-1}),y^{k}-y^{*}}.
	\end{align*}

\cref{prop:inequalityx*y*K:sum}:  The result obtained in  \cref{prop:inequalityx*y*K:k} 
above and  \cref{lemma:ksum} guarantee the required result. 
\end{proof}

\begin{proposition} \label{prop:inequalityx*y*}
	Let $(x^{*},y^{*})$ be a saddle-point of $f$, 
	and let $\xi$, $\eta_{1}$, $\eta_{2}$, and $\eta_{3}$ be in 
	$\mathbf{R}_{++}$. 
	Suppose that $(\forall k \in \mathbf{N})$
	$\mu +\frac{1}{2\tau_{k}}-\frac{\eta_{3}\norm{K}^{2}\beta_{k}^{2}}{2}
	\geq \frac{1}{2 \tau_{k+1}\xi}$,
	$\nu +\frac{1}{2 \sigma_{k}} -\frac{\eta_{2} \norm{K}^{2}\left(\xi -\alpha_{k-1} 
		\right)^{2}}{2} \geq \frac{1}{2 \sigma_{k+1}\xi}$,
	$\eta_{1} \norm{K}^{2}\xi^{2}+\frac{1}{\eta_{2}} \leq \frac{\xi}{\tau_{k-1}}$, and
	$\frac{1}{\sigma_{k}}  -\frac{1}{\eta_{1}} -\frac{1}{\eta_{3}} \geq 0$.
	Then we have the following statements. 
	\begin{enumerate}
		\item  \label{prop:inequalityx*y*:k} For every $k$ in $\mathbf{N}$, we have that 
		\begin{align*}
			&\frac{1}{2 \tau_{k}} \norm{x^{*} -x^{k}}^{2} +\frac{1}{2\sigma_{k}} \norm{y^{*} 
				-y^{k}}^{2}\\
			\geq &\frac{1}{2\tau_{k+1}\xi}\norm{x^{*} -x^{k+1}}^{2}
			+\frac{1}{2\sigma_{k+1}\xi}\norm{y^{*} -y^{k+1}}^{2}
			+ \frac{1}{2\tau_{k}} \norm{x^{k} -x^{k+1}}^{2}
			- \frac{\xi}{2\tau_{k-1}} \norm{x^{k} -x^{k+1}}^{2}\\
			&+\innp{K(x^{k+1}-x^{k}),y^{k+1}-y^{*}}
			-\xi \innp{K(x^{k}-x^{k-1}),y^{k}-y^{*}}.
		\end{align*}
		
		\item \label{prop:inequalityx*y*:sum}
		Let  $\eta_{4}$ be in 
		$\mathbf{R}_{++}$ such that $(\forall k \in \mathbf{N})$
		$\frac{1}{\tau_{k}} -\frac{1}{\eta_{4}} \geq 0$, 
		and let $N$ be in $\mathbf{N}$. Then
		\begin{align*}
			&\xi^{N+1} \left( \frac{1}{  \tau_{0}} \norm{x^{*} -x^{0}}^{2} 
			+\frac{1}{ \sigma_{0}} 
			\norm{y^{*} -y^{0}}^{2} \right)\\	
			\geq  & \frac{1}{ \tau_{N+1}}\norm{x^{*} -x^{N+1}}^{2}
			+ \left(\frac{1}{\sigma_{N+1}} -\xi \eta_{4} \norm{K}^{2} 
			\right)\norm{y^{*} 
				-y^{N+1}}^{2}.
		\end{align*}
	\end{enumerate}
\end{proposition}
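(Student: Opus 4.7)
The plan is to mirror the proof of \cref{prop:inequalityx*y*K} nearly verbatim, substituting the second estimate \cref{lemma:inequalityCS}\cref{lemma:inequalityCS:Ksquare} for the first. The two propositions are parallel: the only difference is whether the operator norm appears with exponent one or two in the Cauchy–Schwarz bounds, and the hypotheses of \cref{prop:inequalityx*y*} are calibrated precisely so that the same bookkeeping closes.

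For \cref{prop:inequalityx*y*:k}, I would start by applying \cref{lemma:inequalityx*y*} with the saddle-point $(x^{*},y^{*})$ to lower-bound $\frac{1}{2\tau_{k}}\|x^{*}-x^{k}\|^{2}+\frac{1}{2\sigma_{k}}\|y^{*}-y^{k}\|^{2}$ by a sum containing the mixed inner product $\innp{K(x^{k+1}-\bar{x}^{k}),y^{k+1}-y^{*}}-\innp{K(x^{k+1}-x^{*}),y^{k+1}-\bar{y}^{k+1}}$. Then I would invoke \cref{lemma:inequalityCS}\cref{lemma:inequalityCS:Ksquare} with $(x,y)=(x^{*},y^{*})$ to split that mixed term into a telescoping pair $\innp{K(x^{k+1}-x^{k}),y^{k+1}-y^{*}}-\xi\innp{K(x^{k}-x^{k-1}),y^{k}-y^{*}}$ together with three error terms whose coefficients carry $\norm{K}^{2}\xi^{2}$, $\norm{K}^{2}(\xi-\alpha_{k-1})^{2}$, and $\norm{K}^{2}\beta_{k}^{2}$. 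Grouping the resulting terms by the norms $\|x^{*}-x^{k+1}\|^{2}$, $\|y^{*}-y^{k+1}\|^{2}$, $\|x^{k}-x^{k-1}\|^{2}$, and $\|y^{k}-y^{k+1}\|^{2}$, the four standing hypotheses (in the order listed in the proposition) give respectively $\tfrac{1}{2\tau_{k+1}\xi}$, $\tfrac{1}{2\sigma_{k+1}\xi}$, $-\tfrac{\xi}{2\tau_{k-1}}$, and a nonnegative coefficient that can be discarded; this yields the claimed inequality.

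For \cref{prop:inequalityx*y*:sum}, I would simply feed the inequality proved in \cref{prop:inequalityx*y*:k} into \cref{lemma:ksum}. Since that lemma takes as its hypothesis exactly the per-$k$ inequality we just established, and additionally requires $\tfrac{1}{\tau_{k}}-\tfrac{1}{\eta_{4}}\geq 0$ (which is the stated assumption on $\eta_{4}$), the conclusion of \cref{lemma:ksum} is verbatim the inequality claimed in \cref{prop:inequalityx*y*:sum}.

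The only mildly delicate step is verifying that the four assumed scalar inequalities match up with the four coefficient groups correctly; this is not conceptually hard but is the most error-prone part, so I would do it by writing out explicitly the consolidated inequality after applying \cref{lemma:inequalityCS}\cref{lemma:inequalityCS:Ksquare}, aligning each term with its matching hypothesis, and then reading off the final bound. Beyond that, the proof is a straightforward transcription of the argument for \cref{prop:inequalityx*y*K} with exponents on $\norm{K}$ adjusted.
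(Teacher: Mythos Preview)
Your proposal is correct and follows essentially the same approach as the paper: apply \cref{lemma:inequalityx*y*}, then \cref{lemma:inequalityCS}\cref{lemma:inequalityCS:Ksquare} with $(x,y)=(x^{*},y^{*})$, regroup and absorb via the four hypotheses for part \cref{prop:inequalityx*y*:k}, and then invoke \cref{lemma:ksum} for part \cref{prop:inequalityx*y*:sum}. The only thing to watch is the bookkeeping you flagged yourself; once the consolidated inequality is written out, each hypothesis matches its coefficient group exactly as in the proof of \cref{prop:inequalityx*y*K}.
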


\begin{proof}
	\cref{prop:inequalityx*y*:k}: 
	Applying \cref{lemma:inequalityx*y*} in the first inequality and adopting 
	\cref{lemma:inequalityCS}\cref{lemma:inequalityCS:Ksquare} with 
	$(x,y)=(x^{*},y^{*})$ 
	in the second inequality,
	we observe that for every $k$ in $\mathbf{N}$
	\begin{align*}
		&\frac{1}{2 \tau_{k}} \norm{x^{*} -x^{k}}^{2} +\frac{1}{2\sigma_{k}} \norm{y^{*} 
			-y^{k}}^{2}\\
		\geq &\left(\mu +\frac{1}{2\tau_{k}}\right)\norm{x^{*} -x^{k+1}}^{2}+\left( 
		\nu +\frac{1}{2 \sigma_{k}}\right) \norm{y^{*} -y^{k+1}}^{2}
		+ \frac{1}{2\tau_{k}} \norm{x^{k} -x^{k+1}}^{2} 
		+ \frac{1}{2 \sigma_{k}} \norm{y^{k} -y^{k+1}}^{2}\\
		&+\innp{K(x^{k+1}-\bar{x}^{k}),y^{k+1}-y^{*}}
		-\innp{K(x^{k+1}-x^{*}),y^{k+1}-\bar{y}^{k+1}}\\
		\geq &\left(\mu +\frac{1}{2\tau_{k}}\right)\norm{x^{*} -x^{k+1}}^{2}+\left( 
		\nu +\frac{1}{2 \sigma_{k}}\right) \norm{y^{*} -y^{k+1}}^{2}
		+ \frac{1}{2\tau_{k}} \norm{x^{k} -x^{k+1}}^{2} 
		+ \frac{1}{2 \sigma_{k}} \norm{y^{k} -y^{k+1}}^{2}\\
		&+\innp{K(x^{k+1}-x^{k}),y^{k+1}-y^{*}}
		-\xi \innp{K(x^{k}-x^{k-1}),y^{k}-y^{*}}\\
		&-\frac{1}{2}\left(\eta_{1} \norm{K}^{2}\xi^{2}\norm{x^{k}-x^{k-1}}^{2}
		+\frac{\norm{y^{k+1}-y^{k} }}{\eta_{1}}^{2}\right) \\
		&-\frac{1}{2} \left( \eta_{2} \norm{K}^{2}\left(\xi -\alpha_{k-1} 
		\right)^{2}\norm{y^{k+1}-y^{*}}^{2} 
		+\frac{\norm{x^{k}-x^{k-1}}^{2}}{\eta_{2}}
		\right)\\
		&-\frac{1}{2} \left( \eta_{3}\norm{K}^{2}\beta_{k}^{2}\norm{x^{k+1}-x^{*}}^{2} 
		+\frac{\norm{y^{k+1}-y^{k}}^{2} }{\eta_{3}}  \right)\\
		=&\left(\mu +\frac{1}{2\tau_{k}}-\frac{\eta_{3}\norm{K}^{2}\beta_{k}^{2}}{2}\right)
		\norm{x^{*} -x^{k+1}}^{2}
		+\left( \nu +\frac{1}{2 \sigma_{k}} -\frac{\eta_{2} \norm{K}^{2}\left(\xi -\alpha_{k-1} 
			\right)^{2}}{2}\right) 
		\norm{y^{*} -y^{k+1}}^{2}\\
		&+\innp{K(x^{k+1}-x^{k}),y^{k+1}-y^{*}}
		-\xi \innp{K(x^{k}-x^{k-1}),y^{k}-y^{*}}
		+ \frac{1}{2\tau_{k}} \norm{x^{k} -x^{k+1}}^{2}\\
		&-\frac{1}{2}\left( \eta_{1} \norm{K}^{2}\xi^{2}+\frac{1}{\eta_{2}}  \right) \norm{x^{k} 
			-x^{k-1}}^{2}		
		+\frac{1}{2} \left( \frac{1}{\sigma_{k}}  -\frac{1}{\eta_{1}} -\frac{1}{\eta_{3}}\right) 
		\norm{y^{k} -y^{k+1}}^{2}\\
		\geq &\frac{1}{2\tau_{k+1}\xi}\norm{x^{*} -x^{k+1}}^{2}
		+\frac{1}{2\sigma_{k+1}\xi}\norm{y^{*} -y^{k+1}}^{2}
		+ \frac{1}{2\tau_{k}} \norm{x^{k} -x^{k+1}}^{2}
		- \frac{\xi}{2\tau_{k-1}} \norm{x^{k} -x^{k-1}}^{2}\\
		&+\innp{K(x^{k+1}-x^{k}),y^{k+1}-y^{*}}
		-\xi \innp{K(x^{k}-x^{k-1}),y^{k}-y^{*}}.
	\end{align*}
	
	\cref{prop:inequalityx*y*:sum}: This is clear from the result obtained in  
	 \cref{prop:inequalityx*y*:k}
	above and \cref{lemma:ksum}.
\end{proof}

Recall that in our algorithm \cref{eq:fact:algorithmsymplity} generating the sequence of 
iterations
$\left((x^{k},y^{k})\right)_{k \in \mathbf{N}}$, the sequences $(\tau_{k})_{k \in 
	\mathbf{N} 
	\cup \{-1\}} \in \mathbf{R}_{++}$, 
$(\sigma_{k})_{k \in \mathbf{N} \cup \{-1\}} \in \mathbf{R}_{++}$, 
$(\alpha_{k})_{k \in \mathbf{N} \cup \{-1\}} \in \mathbf{R}_{+}$,
and $(\beta_{k})_{k \in \mathbf{N} \cup \{-1\}} \in \mathbf{R}_{+}$
are customized parameters by users of the algorithm.

\begin{theorem}  \label{theorem:linearconverg:K}
	Let $(x^{*},y^{*})$ be a saddle-point of $f$.
	Suppose $\inf_{k \in 	\mathbf{N}}\tau_{k} >0$ and $\inf_{k \in 	
	\mathbf{N}}\sigma_{k} >0$. 
	Let $\xi$ be in $(0,1)$ such that 
	$\xi \sup_{k \in \mathbf{N}} \tau_{k} \sup_{k \in \mathbf{N}} \sigma_{k} \norm{K}^{2}< 
	1$.
	Let $\eta_{1}$, $\eta_{2}$, $\eta_{3}$, and $\eta_{4}$ be in 
	$\mathbf{R}_{++}$ such that 
	$\norm{K} \left( \eta_{1} \xi^{2} +\frac{1}{\eta_{2}}\right) \leq \frac{\xi}{\sup_{i \in 
	\mathbf{N}}\tau_{i}}$,
	$\norm{K} \left(\frac{1}{\eta_{1}} +\frac{1}{\eta_{3}}\right) \leq \frac{1}{\sup_{i \in 
	\mathbf{N}}\sigma_{i}}$, $\eta_{4} \geq \sup_{i \in \mathbf{N}} \tau_{i}$,
	and $\frac{1}{\sup_{i \in \mathbf{N}}\sigma_{i}} -\xi \eta_{4} \norm{K}^{2} >0$.
	Suppose that $\mu $ and $\nu$ in $\mathbf{R}_{++}$ satisfying 
	$
	2\mu\geq \frac{1}{\xi \inf_{k \in \mathbf{N}} 
		\tau_{k}} -\frac{1}{\sup_{k \in \mathbf{N}}\tau_{k}}+  \eta_{3} \norm{K} \sup_{k 
		\in \mathbf{N}}\beta_{k}^{2}
	$
	and 
	$
	2 \nu \geq \frac{1}{\xi \inf_{k \in 
			\mathbf{N}}\sigma_{k}} -\frac{1}{\sup_{k \in \mathbf{N}}\sigma_{k}}  
	+ \eta_{2}\norm{K}  \sup_{k \in \mathbf{N}}(\xi -\alpha_{k})^{2}.
	$
	
	Then we have that  for every $k \in \mathbf{N}$,
	\begin{align*}
		&\norm{x^{k+1}-x^{*}} 
		\leq \xi^{k+1} \sup_{i \in \mathbf{N}}\tau_{i} 
		\left(\frac{1}{ \tau_{0}} \norm{x^{*} -x^{0}}^{2} +\frac{1}{\sigma_{0}} 
		\norm{y^{*} -y^{0}}^{2}\right);\\
		&\norm{y^{k+1}-y^{*}} 
		\leq \xi^{k+1} \left(\frac{1}{\sup_{i\in \mathbf{N}} \sigma_{i}} -\xi \eta_{4} 
		\norm{K}^{2}\right)
		\left(\frac{1}{ \tau_{0}} \norm{x^{*} -x^{0}}^{2} +\frac{1}{\sigma_{0}} 
		\norm{y^{*} -y^{0}}^{2}\right).
	\end{align*}
	Consequently,  the sequence of iterations $\left((x^{k},y^{k})\right)_{k \in 
		\mathbf{N}}$ 
	linearly converges to the saddle-point $(x^{*},y^{*})$ of $f$ 
	with a linear rate $\xi \in (0,1)$. 
\end{theorem}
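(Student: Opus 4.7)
The plan is to reduce the theorem to \cref{prop:inequalityx*y*K}\cref{prop:inequalityx*y*K:sum}, so the work is almost entirely a matter of verifying that the hypotheses of that proposition are implied by the sup/inf-type hypotheses stated in the theorem. Concretely, I would show that under the assumptions of \cref{theorem:linearconverg:K}, the four per-$k$ inequalities required in \cref{prop:inequalityx*y*K} hold:
\begin{align*}
\mu +\tfrac{1}{2\tau_k} -\tfrac{\norm{K}}{2}\eta_3\beta_k^2 &\geq \tfrac{1}{2\tau_{k+1}\xi}, &
\nu +\tfrac{1}{2\sigma_k} -\tfrac{\eta_2\norm{K}(\xi-\alpha_{k-1})^2}{2} &\geq \tfrac{1}{2\sigma_{k+1}\xi},\\
\norm{K}\!\left(\eta_1\xi^2+\tfrac{1}{\eta_2}\right) &\leq \tfrac{\xi}{\tau_{k-1}}, &
\tfrac{1}{\sigma_k} -\norm{K}\!\left(\tfrac{1}{\eta_1}+\tfrac{1}{\eta_3}\right) &\geq 0.
\end{align*}

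Next I would carry out each verification by monotonicity. For the first inequality, rewrite the theorem's hypothesis $2\mu \geq \frac{1}{\xi \inf_k \tau_k}-\frac{1}{\sup_k \tau_k}+\eta_3\norm{K}\sup_k \beta_k^2$ as $\mu +\frac{1}{2\sup_k\tau_k} -\frac{\eta_3\norm{K}\sup_k\beta_k^2}{2}\geq \frac{1}{2\xi \inf_k\tau_k}$, and then use $\tau_k\leq \sup_i\tau_i$, $\beta_k^2\leq \sup_k\beta_k^2$, $\tau_{k+1}\geq \inf_i\tau_i$ to dominate the desired bound. The second inequality follows in the same way from the hypothesis on $\nu$. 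The third and fourth inequalities are immediate since $\tau_{k-1}\leq \sup_i\tau_i$ and $\sigma_k\leq \sup_i\sigma_i$. Finally, $\eta_4\geq \sup_i\tau_i\geq \tau_k$ gives $\frac{1}{\tau_k}-\frac{1}{\eta_4}\geq 0$, so the extra hypothesis in \cref{prop:inequalityx*y*K}\cref{prop:inequalityx*y*K:sum} is also satisfied.

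With these checks in hand, \cref{prop:inequalityx*y*K}\cref{prop:inequalityx*y*K:sum} yields, for every $N\in\mathbf{N}$,
\[
\xi^{N+1}\!\left(\tfrac{1}{\tau_0}\norm{x^*-x^0}^2+\tfrac{1}{\sigma_0}\norm{y^*-y^0}^2\right)
\geq \tfrac{1}{\tau_{N+1}}\norm{x^*-x^{N+1}}^2+\left(\tfrac{1}{\sigma_{N+1}}-\xi\eta_4\norm{K}^2\right)\norm{y^*-y^{N+1}}^2.
\]
Since both terms on the right are nonnegative (the coefficient of $\norm{y^*-y^{N+1}}^2$ is nonnegative by the hypothesis $\frac{1}{\sup_i\sigma_i}-\xi\eta_4\norm{K}^2>0$ together with $\sigma_{N+1}\leq\sup_i\sigma_i$), I can isolate each one separately, bound $\frac{1}{\tau_{N+1}}\geq \frac{1}{\sup_i\tau_i}$ and $\frac{1}{\sigma_{N+1}}-\xi\eta_4\norm{K}^2\geq \frac{1}{\sup_i\sigma_i}-\xi\eta_4\norm{K}^2$, and rearrange to produce the two estimates claimed in the theorem.

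The main obstacle is purely bookkeeping: matching the per-index conditions of \cref{prop:inequalityx*y*K} against the uniform-in-$k$ conditions written in terms of $\inf_k$, $\sup_k$, and checking that the chosen auxiliary constants $\eta_1,\eta_2,\eta_3,\eta_4$ can simultaneously absorb all four inequalities; \cref{lemma:infsupconstants} is exactly what guarantees that such a consistent choice exists in the constant-parameter regime, and otherwise the hypotheses of the theorem are already phrased to make the reductions transparent. Once the hypotheses of \cref{prop:inequalityx*y*K}\cref{prop:inequalityx*y*K:sum} are verified, the linear convergence with rate $\xi\in(0,1)$ is immediate.
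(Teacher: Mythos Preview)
Your proposal is correct and follows essentially the same route as the paper: verify that the uniform $\sup/\inf$ hypotheses imply the per-$k$ conditions of \cref{prop:inequalityx*y*K}, invoke \cref{prop:inequalityx*y*K}\cref{prop:inequalityx*y*K:sum}, then bound the coefficients $\frac{1}{\tau_{N+1}}$ and $\frac{1}{\sigma_{N+1}}-\xi\eta_4\norm{K}^2$ from below using the sup of the step sizes. The only cosmetic difference is that the paper first remarks that the assumption $\xi\sup_k\tau_k\sup_k\sigma_k\norm{K}^2<1$ is what allows a consistent choice of $\eta_4$, whereas you (correctly) note that $\eta_4$ with the required properties is already handed to you in the theorem's hypotheses; your aside about \cref{lemma:infsupconstants} is extraneous here since that lemma is only needed for the constant-parameter corollaries.
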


\begin{proof}
	Because $\xi \sup_{k \in \mathbf{N}} \tau_{k} \sup_{k \in 
		\mathbf{N}} \sigma_{k} \norm{K}^{2}< 1$, 
we are able to take $\eta_{4} \in \mathbf{R}_{++}$ such that
$0<\sup_{k \in \mathbf{N}}\tau_{k} \leq \eta_{4}$ and $\eta_{4}\xi \norm{K}^{2} 
<\frac{1}{ \sup_{k \in \mathbf{N}}\sigma_{k}}$, which leads to
\begin{align*}
\frac{1}{\sup_{i\in \mathbf{N}} \sigma_{i}} -\xi \eta_{4} \norm{K}^{2}>0
\quad	\text{and} \quad
(\forall k \in \mathbf{N}) \quad  \frac{1}{\tau_{k}} -\frac{1}{\eta_{4}} \geq 0.
\end{align*}
Moreover, our assumptions 
$
2\mu\geq \frac{1}{\xi \inf_{k \in \mathbf{N}} 
\tau_{k}} -\frac{1}{\sup_{k \in \mathbf{N}}\tau_{k}}+  \eta_{3} \norm{K} \sup_{k 
\in \mathbf{N}}\beta_{k}^{2}
$
and 
$
2 \nu \geq \frac{1}{\xi \inf_{k \in 
	\mathbf{N}}\sigma_{k}} -\frac{1}{\sup_{k \in \mathbf{N}}\sigma_{k}}  
+ \eta_{2}\norm{K}  \sup_{k \in \mathbf{N}}(\xi -\alpha_{k})^{2}
$
clearly implies that 
\begin{align*}
(\forall k \in \mathbf{N}) \quad
\mu +\frac{1}{2\tau_{k}}-\frac{\eta_{3}\norm{K} \beta_{k}^{2}}{2}
\geq \frac{1}{2 \tau_{k+1}\xi}
\quad 
\text{and}
\quad 
\nu +\frac{1}{2 \sigma_{k}} -\frac{\eta_{2} \norm{K} \left(\xi -\alpha_{k-1} 
	\right)^{2}}{2} \geq \frac{1}{2 \sigma_{k+1}\xi}.	 
\end{align*}
In addition, it is clear that our assumptions
$\norm{K} \left( \eta_{1} \xi^{2} +\frac{1}{\eta_{2}}\right) \leq \frac{\xi}{\sup_{i \in 
		\mathbf{N}}\tau_{i}}$, and
$\norm{K} \left(\frac{1}{\eta_{1}} +\frac{1}{\eta_{3}}\right) \leq \frac{1}{\sup_{i \in 
		\mathbf{N}}\sigma_{i}}$
ensures
\begin{align*}
	(\forall k \in \mathbf{N}) \quad     \norm{K} \left(\eta_{1}\xi^{2}
	+ \frac{1}{\eta_{2}}\right) \leq \frac{\xi}{\tau_{k-1}} 
 \quad \text{and} \quad 
 \frac{1}{ \sigma_{k}} - \norm{K} \left(\frac{1}{\eta_{1}} +\frac{1}{\eta_{3}} \right) \geq 0. 
\end{align*}

	Hence, all assumptions in \cref{prop:inequalityx*y*K}\cref{prop:inequalityx*y*K:sum}
	 hold in our case. 
	Bearing \cref{prop:inequalityx*y*K}\cref{prop:inequalityx*y*K:sum} in mind,
	 we obtain that for every $k \in \mathbf{N}$,
	\begin{align*}
		&\xi^{k+1} \left( \frac{1}{\tau_{0}} \norm{x^{*} -x^{0}}^{2} +\frac{1}{\sigma_{0}} 
		\norm{y^{*} -y^{0}}^{2} \right)\\
		\geq&  \frac{1}{\tau_{k+1}}\norm{x^{*} -x^{k+1}}^{2}
		+ \left(\frac{1}{\sigma_{k+1}} -\xi \eta_{4} \norm{K}^{2} \right)\norm{y^{*} 
			-y^{k+1}}^{2}\\
		\geq &\frac{1}{\sup_{i \in 	\mathbf{N}} \tau_{i}}\norm{x^{*} -x^{k+1}}^{2}
		+ \left( \frac{1}{\sup_{i \in \mathbf{N}}\sigma_{i}} -\xi \eta_{4} \right) \norm{K}^{2}
		\norm{y^{*} -y^{k+1}}^{2},
	\end{align*}
	which, combined with the assumptions $
	0 < \inf_{k \in 	\mathbf{N}}\tau_{k} \leq \sup_{k \in 	\mathbf{N}}\tau_{k} \leq \eta_{4} 
	<\infty$ 
	and $\frac{1}{\sup_{i \in \mathbf{N}}\sigma_{i}} -\xi \eta_{4} \norm{K}^{2} >0$,
	yields the required linearly convergent result.
\end{proof}

\begin{theorem} \label{theorem:linearconverg:Ksquare}
	Let $(x^{*},y^{*})$ be a saddle-point of $f$.
	Suppose $\inf_{k \in 	\mathbf{N}}\tau_{k} >0$ and $\inf_{k \in 	
		\mathbf{N}}\sigma_{k} >0$.  
	Let $\xi$ be in $(0,1)$ 
	such that $\xi \sup_{k \in \mathbf{N}} \tau_{k} \sup_{k \in 
		\mathbf{N}} \sigma_{k} \norm{K}^{2}< 1$.
	Let $\eta_{1}$, $\eta_{2}$, $\eta_{3}$, and $\eta_{4}$ be in 
	$\mathbf{R}_{++}$ such that 
	$\eta_{1} \norm{K}^{2}\xi +\frac{1}{\eta_{2}\xi} \leq \frac{1}{\sup_{k \in 
			\mathbf{N}}\tau_{k}}$, 
	$\frac{1}{\eta_{1}} +\frac{1}{\eta_{3}} \leq \frac{1}{\sup_{k \in \mathbf{N}} \sigma_{k}}$,
	$ \sup_{k \in \mathbf{N}}\tau_{k} \leq \eta_{4}$ and $\eta_{4}\xi \norm{K}^{2} 
	<\frac{1}{ \sup_{k \in \mathbf{N}}\sigma_{k}}$.
	Suppose that $\mu $ and $\nu$ in $\mathbf{R}_{++}$ satisfying 
	$
	2\mu\geq \frac{1}{\xi \inf_{k \in \mathbf{N}} 
		\tau_{k}} -\frac{1}{\sup_{k \in \mathbf{N}}\tau_{k}}+  \eta_{3} \norm{K}^{2}\sup_{k 
		\in \mathbf{N}}\beta_{k}^{2}
	$
	and 
	$
	2 \nu \geq \frac{1}{\xi \inf_{k \in 
			\mathbf{N}}\sigma_{k}} -\frac{1}{\sup_{k \in \mathbf{N}}\sigma_{k}}  
	+ \eta_{2}\norm{K}^{2} \sup_{k \in \mathbf{N}}(\xi -\alpha_{k})^{2}.
	$
	
	Then we have that  for every $k \in \mathbf{N}$,
	\begin{align*}
		&\norm{x^{k+1}-x^{*}} 
		\leq \xi^{k+1} \sup_{i \in \mathbf{N}}\tau_{i} 
		\left(\frac{1}{ \tau_{0}} \norm{x^{*} -x^{0}}^{2} +\frac{1}{\sigma_{0}} 
		\norm{y^{*} -y^{0}}^{2}\right);\\
		&\norm{y^{k+1}-y^{*}} 
		\leq \xi^{k+1} \left(\frac{1}{\sup_{i\in \mathbf{N}} \sigma_{i}} -\xi \eta_{4} 
		\norm{K}^{2}\right)
		\left(\frac{1}{ \tau_{0}} \norm{x^{*} -x^{0}}^{2} +\frac{1}{\sigma_{0}} 
		\norm{y^{*} -y^{0}}^{2}\right).
	\end{align*}
	Consequently,  the sequence of iterations $\left((x^{k},y^{k})\right)_{k \in 
		\mathbf{N}}$ 
converges 	linearly  to the saddle-point $(x^{*},y^{*})$ of $f$ 
	with a  rate $\xi \in (0,1)$. 
\end{theorem}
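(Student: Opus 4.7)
The plan is to run the same two-step argument as in the preceding $\|K\|$-version (\cref{theorem:linearconverg:K}), but invoking \cref{prop:inequalityx*y*} in place of \cref{prop:inequalityx*y*K}. Specifically, I would first check that the sup/inf-based hypotheses of the theorem imply the per-$k$ hypotheses required by \cref{prop:inequalityx*y*}, then invoke \cref{prop:inequalityx*y*}\cref{prop:inequalityx*y*:sum} to get a telescoped inequality whose left side decays geometrically in $\xi^{k+1}$ and whose right side dominates a positive multiple of $\norm{x^{k+1}-x^*}^2 + \norm{y^{k+1}-y^*}^2$.

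For the translation step, I would verify four per-$k$ inequalities. The hypothesis on $2\mu$ rearranges (after dividing by $2$ and using $\beta_k^2 \leq \sup_j \beta_j^2$, $\tau_k \leq \sup_j \tau_j$, $\tau_{k+1} \geq \inf_j \tau_j$) into the required $\mu + \frac{1}{2\tau_k} - \frac{\eta_3\norm{K}^2\beta_k^2}{2} \geq \frac{1}{2\tau_{k+1}\xi}$, and symmetrically the hypothesis on $2\nu$ yields $\nu + \frac{1}{2\sigma_k} - \frac{\eta_2\norm{K}^2(\xi-\alpha_{k-1})^2}{2} \geq \frac{1}{2\sigma_{k+1}\xi}$. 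The condition $\eta_1\norm{K}^2\xi + \frac{1}{\eta_2\xi} \leq \frac{1}{\sup_j \tau_j}$, after multiplying by $\xi$, gives $\eta_1\norm{K}^2\xi^2 + \frac{1}{\eta_2} \leq \frac{\xi}{\sup_j \tau_j} \leq \frac{\xi}{\tau_{k-1}}$, and the condition $\frac{1}{\eta_1} + \frac{1}{\eta_3} \leq \frac{1}{\sup_j \sigma_j} \leq \frac{1}{\sigma_k}$ delivers the last one.

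For the auxiliary constant $\eta_4$, the hypothesis $\sup_j \tau_j \leq \eta_4$ forces $\frac{1}{\tau_k} - \frac{1}{\eta_4} \geq 0$ for every $k$, and the hypothesis $\eta_4\xi\norm{K}^2 < \frac{1}{\sup_j \sigma_j}$ forces $\frac{1}{\sup_j \sigma_j} - \xi\eta_4\norm{K}^2 > 0$. Then \cref{prop:inequalityx*y*}\cref{prop:inequalityx*y*:sum} yields
\[
\xi^{k+1}\left(\tfrac{1}{\tau_0}\norm{x^*-x^0}^2 + \tfrac{1}{\sigma_0}\norm{y^*-y^0}^2\right) \geq \tfrac{1}{\tau_{k+1}}\norm{x^*-x^{k+1}}^2 + \left(\tfrac{1}{\sigma_{k+1}} - \xi\eta_4\norm{K}^2\right)\norm{y^*-y^{k+1}}^2,
\]
and bounding $\tfrac{1}{\tau_{k+1}} \geq \tfrac{1}{\sup_i \tau_i}$ and $\tfrac{1}{\sigma_{k+1}} - \xi\eta_4\norm{K}^2 \geq \tfrac{1}{\sup_i \sigma_i} - \xi\eta_4\norm{K}^2 > 0$ from below isolates each of the two claimed linear-rate inequalities.

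The main obstacle will be the bookkeeping needed to juggle the four constants $\eta_1,\ldots,\eta_4$ between the per-$k$ form used inside \cref{prop:inequalityx*y*} and the sup-based form stated in the theorem. No genuinely new analytic idea is required beyond what appears in the proof of \cref{theorem:linearconverg:K}: the two proofs should be nearly word-for-word identical modulo replacing $\norm{K}$ by $\norm{K}^2$ in the appropriate places and reading off the $\eta_2,\eta_3$ terms from \cref{lemma:inequalityCS}\cref{lemma:inequalityCS:Ksquare} rather than \cref{lemma:inequalityCS}\cref{lemma:inequalityCS:K}.
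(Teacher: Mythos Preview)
Your proposal is correct and follows essentially the same approach as the paper's proof: verify that the sup/inf-based hypotheses of the theorem imply the per-$k$ hypotheses of \cref{prop:inequalityx*y*}, invoke \cref{prop:inequalityx*y*}\cref{prop:inequalityx*y*:sum}, and then bound the right-hand side below using $\frac{1}{\tau_{k+1}}\geq \frac{1}{\sup_i \tau_i}$ and $\frac{1}{\sigma_{k+1}}-\xi\eta_4\norm{K}^2 \geq \frac{1}{\sup_i \sigma_i}-\xi\eta_4\norm{K}^2>0$. The bookkeeping you outline for the four per-$k$ inequalities and for $\eta_4$ matches the paper's argument step for step.
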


\begin{proof}
	Based on our assumptions $\xi >0$,
	$\eta_{1} \norm{K}^{2}\xi +\frac{1}{\eta_{2}\xi} \leq \frac{1}{\sup_{k \in 
			\mathbf{N}}\tau_{k}}$,
	and
	$\frac{1}{\eta_{1}} +\frac{1}{\eta_{3}} \leq \frac{1}{\sup_{k \in \mathbf{N}} \sigma_{k}}$,
	we observe that 
	\begin{align*}
		(\forall k \in \mathbf{N}) \quad  \eta_{1} \norm{K}^{2}\xi^{2}+\frac{1}{\eta_{2}} \leq 
		\frac{\xi}{\tau_{k-1}} \quad \text{and} \quad 
		\frac{1}{\sigma_{k}}  -\frac{1}{\eta_{1}} -\frac{1}{\eta_{3}} \geq 0. 
	\end{align*}
	 Because $\xi \sup_{k \in \mathbf{N}} \tau_{k} \sup_{k \in 
			\mathbf{N}} \sigma_{k} \norm{K}^{2}< 1$, 
	we are able to take $\eta_{4} \in \mathbf{R}_{++}$ such that
	$0<\inf_{k \in \mathbf{N}}\tau_{k} \leq \sup_{k \in \mathbf{N}}\tau_{k} \leq \eta_{4}$ 
	and $\eta_{4}\xi \norm{K}^{2} 
	<\frac{1}{ \sup_{k \in \mathbf{N}}\sigma_{k}}$, leads to 
	\begin{align*}
		\frac{1}{\sup_{i\in \mathbf{N}} \sigma_{i}} -\xi \eta_{4} \norm{K}^{2}>0
		\quad	\text{and} \quad
		(\forall k \in \mathbf{N}) \quad  \frac{1}{\tau_{k}} -\frac{1}{\eta_{4}} \geq 0.
	\end{align*}
	Furthermore, our assumptions 
	$
	2\mu\geq \frac{1}{\xi \inf_{k \in \mathbf{N}} 
		\tau_{k}} -\frac{1}{\sup_{k \in \mathbf{N}}\tau_{k}}+  \eta_{3} \norm{K}^{2}\sup_{k 
		\in \mathbf{N}}\beta_{k}^{2}
	$
	and 
	$
	2 \nu \geq \frac{1}{\xi \inf_{k \in 
			\mathbf{N}}\sigma_{k}} -\frac{1}{\sup_{k \in \mathbf{N}}\sigma_{k}}  
	+ \eta_{2}\norm{K}^{2} \sup_{k \in \mathbf{N}}(\xi -\alpha_{k})^{2}
	$
	clearly guarantee  that 
	\begin{align*}
		(\forall k \in \mathbf{N}) \quad
		\mu +\frac{1}{2\tau_{k}}-\frac{\eta_{3}\norm{K}^{2}\beta_{k}^{2}}{2}
		\geq \frac{1}{2 \tau_{k+1}\xi}
		\quad 
		\text{and}
		\quad 
		\nu +\frac{1}{2 \sigma_{k}} -\frac{\eta_{2} \norm{K}^{2}\left(\xi -\alpha_{k-1} 
			\right)^{2}}{2} \geq \frac{1}{2 \sigma_{k+1}\xi}.	 
	\end{align*}
	Therefore, all assumptions in \cref{prop:inequalityx*y*}\cref{prop:inequalityx*y*:sum}
	 are satisfied. 
	Due to \cref{prop:inequalityx*y*}\cref{prop:inequalityx*y*:sum}, we obtain that 
	for every $k \in \mathbf{N}$,
	\begin{align*}
		&\xi^{k+1} \left( \frac{1}{\tau_{0}} \norm{x^{*} -x^{0}}^{2} +\frac{1}{\sigma_{0}} 
		\norm{y^{*} -y^{0}}^{2} \right)\\
		\geq&  \frac{1}{\tau_{k+1}}\norm{x^{*} -x^{k+1}}^{2}
		+ \left(\frac{1}{\sigma_{k+1}} -\xi \eta_{4} \norm{K}^{2} \right)\norm{y^{*} 
			-y^{k+1}}^{2}\\
		\geq &\frac{1}{\sup_{i \in 	\mathbf{N}} \tau_{i}}\norm{x^{*} -x^{k+1}}^{2}
		+ \left(\frac{1}{\sup_{i\in \mathbf{N}} \sigma_{i}} -\xi \eta_{4} 
		\norm{K}^{2}\right)
		\norm{y^{*} 
			-y^{k+1}}^{2},
	\end{align*}
	which, combined with the assumptions $0<\inf_{k \in \mathbf{N}}\tau_{k} \leq \sup_{k 
	\in \mathbf{N}}\tau_{k} \leq \eta_{4} <\infty$ 
	and $ \frac{1}{\sup_{i\in \mathbf{N}} \sigma_{i}} -\xi \eta_{4} 
	\norm{K}^{2} >0$,
	ensures the desired linearly convergent result.
\end{proof}

To apply our linear convergence results in 
\cref{theorem:linearconverg:K,theorem:linearconverg:Ksquare}  easily 
in practice, we present the following results to simplify assumptions in
\cref{theorem:linearconverg:K,theorem:linearconverg:Ksquare} by setting all involved 
parameters 
$(\tau_{k})_{k \in 
	\mathbf{N} 
	\cup \{-1\}} \in \mathbf{R}_{++}$, 
$(\sigma_{k})_{k \in \mathbf{N} \cup \{-1\}} \in \mathbf{R}_{++}$, 
$(\alpha_{k})_{k \in \mathbf{N} \cup \{-1\}} \in \mathbf{R}_{+}$,
and $(\beta_{k})_{k \in \mathbf{N} \cup \{-1\}} \in \mathbf{R}_{+}$
to be constants $\tau, \sigma, \alpha$, and $\beta$, respectively.
In the following results, as in practical situations,  $\mu >0$, $\nu >0$, $\norm{K} \geq 
0$ are known. 
We present assumptions about the involved parameters 
$\tau, \sigma, \alpha$, and $\beta$ 
of our algorithms and about the extra parameters $\eta_{1}$, $\eta_{2}$,
$\eta_{3}$, and $\eta_{4}$ 
used in the  proof of \cref{theorem:linearconverg:K,theorem:linearconverg:Ksquare}
to show the linear convergence of our algorithms.

\begin{corollary} \label{corollary:linearconverg:K}
 Let $(x^{*},y^{*})$ be a saddle-point of $f$. Suppose that 
\begin{align}  \label{eq:corollary:linearconverg:K:constant}
	(\forall k \in \mathbf{N} \cup \{-1\}) \quad \tau_{k} \equiv \tau \in \mathbf{R}_{++}, 
	\sigma_{k} \equiv  \sigma \in \mathbf{R}_{++}, 
	\alpha_{k} \equiv \alpha \in \mathbf{R}_{++}, \text{ and } 
	\beta_{k} \equiv \beta \in \mathbf{R}_{+}.
\end{align}
Recall that $\mu>0$, $\nu>0$, $\norm{K} \geq 0$ are known. 
Let $\tau >0$ and $\sigma >0$ be small enough such that 
$\norm{K}^{2} <\frac{2\mu +\frac{1}{\tau}}{\sigma}$ and 
$\norm{K}^{2} < \frac{2 \nu +\frac{1}{\sigma}}{\tau}$. 
Let $\alpha$ be in $\left( \frac{1}{ 2 \mu \tau +1} , 1\right)$ such that $\alpha \geq 
\frac{1}{2 \nu \sigma +1}$, and $\alpha \tau \sigma \norm{K}^{2} <1$.
Let $\beta$ satisfy that 
$\sigma \norm{K}^{2}\beta^{2} \leq 
\left(1-\alpha \tau \sigma \norm{K}^{2}\right)
\left(2\mu +\frac{1}{\tau} -\frac{1}{\alpha \tau}\right)$.
Let  $\eta_{4}$ be in $\mathbf{R}_{++}$ such that 
$\eta_{4} \geq \tau$ and $\alpha \sigma \eta_{4} \norm{K}^{2} < 1$.
Let  $\eta_{3}$ be in $\mathbf{R}_{++}$ such that 
$\eta_{3} > \frac{\sigma \norm{K}}{ 1 -\alpha \tau \sigma \norm{K}^{2}}$ and
$\eta_{3}\norm{K} \beta^{2} < 2 \mu +\frac{1}{\tau} 
-\frac{1}{\alpha \tau}$.
Let $\eta_{1}$ be in $\mathbf{R}_{++}$ such that 
$\eta_{1} \geq \frac{\sigma \norm{K}\eta_{3}}{\eta_{3} -\sigma \norm{K}}$ and
$\alpha \tau \eta_{1}\norm{K} <1$.
Let  $\eta_{2}$ be in $\mathbf{R}_{++}$ such that 
$\eta_{2} \geq \frac{\tau \norm{K}}{\alpha - \tau \eta_{1} \norm{K}\alpha^{2}}$. 	

	Then we have that  for every $k \in \mathbf{N}$,
	\begin{align*}
		&\norm{x^{k+1}-x^{*}} 
		\leq \alpha^{k+1}  \tau 
		\left(\frac{1}{ \tau_{0}} \norm{x^{*} -x^{0}}^{2} +\frac{1}{\sigma_{0}} 
		\norm{y^{*} -y^{0}}^{2}\right);\\
		&\norm{y^{k+1}-y^{*}} 
		\leq \alpha^{k+1} \left(\frac{1}{\sigma} -\alpha \eta_{4} \norm{K}^{2} \right)
		\left(\frac{1}{ \tau_{0}} \norm{x^{*} -x^{0}}^{2} +\frac{1}{\sigma_{0}} 
		\norm{y^{*} -y^{0}}^{2}\right).
	\end{align*}
	Consequently,  the sequence of iterations $\left((x^{k},y^{k})\right)_{k \in 
		\mathbf{N}}$ 
	linearly converges to the saddle-point $(x^{*},y^{*})$ of $f$ 
	with a  rate $\alpha \in (0,1)$. 
\end{corollary}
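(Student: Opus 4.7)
The plan is to specialize \cref{theorem:linearconverg:K} to the constant-parameter setting by taking $\xi := \alpha$, and then to verify every hypothesis of that theorem using the constructive existence lemma \cref{lemma:infsupconstants} with $\zeta := 2$. Because all iterates are generated with constant coefficients $\tau,\sigma,\alpha,\beta$, we have $\inf_{k}\tau_{k} = \sup_{k}\tau_{k} = \tau$ and likewise for $\sigma$, and $\sup_{k}(\xi-\alpha_{k})^{2} = \sup_{k}(\alpha-\alpha)^{2}=0$, which collapses many of the hypotheses of \cref{theorem:linearconverg:K} to manageable scalar inequalities.

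First, I would observe that the assumption $\alpha\tau\sigma\norm{K}^{2}<1$ directly yields $\xi \sup_{k}\tau_{k}\sup_{k}\sigma_{k}\norm{K}^{2}<1$. Next, taking $\eta_{4}$ as in the statement, \cref{lemma:infsupconstants}\cref{lemma:infsupconstants:eta4} guarantees $\eta_{4}\geq \sup_{i}\tau_{i}$ and $\frac{1}{\sup_{i}\sigma_{i}}-\xi\eta_{4}\norm{K}^{2}>0$. For the $2\mu$-inequality, I would invoke \cref{lemma:infsupconstants}\cref{lemma:infsupconstants:eta3}\cref{lemma:infsupconstants:eta3:a}: the hypothesis $\eta_{3}\norm{K}\beta^{2}<2\mu+\frac{1}{\tau}-\frac{1}{\alpha\tau}$ together with constancy gives exactly $2\mu \geq \tfrac{1}{\xi\inf_{k}\tau_{k}}-\tfrac{1}{\sup_{k}\tau_{k}}+\eta_{3}\norm{K}\sup_{k}\beta_{k}^{2}$. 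For the $2\nu$-inequality, \cref{lemma:infsupconstants}\cref{lemma:infsupconstants:nu} (applied with $i=1$) reduces the requirement to $\alpha \geq \frac{1}{2\nu\sigma+1}$, which is assumed.

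It remains to verify the two mixed-$\eta$ inequalities in \cref{theorem:linearconverg:K}. \cref{lemma:infsupconstants}\cref{lemma:infsupconstants:eta1a} translates the bound $\eta_{1}\geq \frac{\sigma\norm{K}\eta_{3}}{\eta_{3}-\sigma\norm{K}}$ into $\norm{K}\bigl(\tfrac{1}{\eta_{1}}+\tfrac{1}{\eta_{3}}\bigr)\leq \tfrac{1}{\sup_{i}\sigma_{i}}$, and \cref{lemma:infsupconstants}\cref{lemma:infsupconstants:eta2a} converts $\eta_{2}\geq \frac{\tau\norm{K}}{\alpha-\tau\eta_{1}\norm{K}\alpha^{2}}$ into $\norm{K}\bigl(\eta_{1}\alpha^{2}+\tfrac{1}{\eta_{2}}\bigr)\leq \tfrac{\alpha}{\sup_{i}\tau_{i}}$, i.e., $\norm{K}(\eta_{1}\xi^{2}+\tfrac{1}{\eta_{2}})\leq \tfrac{\xi}{\sup_{i}\tau_{i}}$. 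At this point all hypotheses of \cref{theorem:linearconverg:K} with $\xi=\alpha$ are in force, and I simply quote the conclusion of that theorem, substituting $\sup_{i}\tau_{i}=\tau$ and $\sup_{i}\sigma_{i}=\sigma$ to obtain the two displayed estimates, and therefore linear convergence at rate $\alpha\in(0,1)$.

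The only nontrivial point, and the place where care is needed, is checking that the chain of constructive choices for $\eta_{4},\eta_{3},\eta_{1},\eta_{2}$ is internally consistent: each new $\eta_{j}$ is required to satisfy a bound that depends on previously chosen $\eta_{\ell}$'s, so I must apply the items of \cref{lemma:infsupconstants} in the correct order (\cref{lemma:infsupconstants:alpha}, \cref{lemma:infsupconstants:beta}, \cref{lemma:infsupconstants:eta4}, \cref{lemma:infsupconstants:eta3:a}, \cref{lemma:infsupconstants:eta1a}, \cref{lemma:infsupconstants:eta2a}) and verify that the precondition of each invocation is guaranteed by the preceding ones. Once that bookkeeping is done, the corollary is a direct instance of \cref{theorem:linearconverg:K}.
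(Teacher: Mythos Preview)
Your proposal is correct and follows essentially the same approach as the paper: set $\xi=\alpha$ in \cref{theorem:linearconverg:K}, then invoke the relevant parts of \cref{lemma:infsupconstants} with $\zeta=2$ (namely \cref{lemma:infsupconstants:munu}, \cref{lemma:infsupconstants:alpha}, \cref{lemma:infsupconstants:beta}, \cref{lemma:infsupconstants:nu}, \cref{lemma:infsupconstants:eta4}, \cref{lemma:infsupconstants:eta3:a}, \cref{lemma:infsupconstants:eta1a}, \cref{lemma:infsupconstants:eta2a}) in the order you describe to verify each hypothesis, and conclude. The paper's proof differs only in presentation, not in substance.
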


\begin{proof}
	Set $\xi =\alpha$ in \cref{theorem:linearconverg:K}. 
	
	Applying  
	\cref{lemma:infsupconstants}\cref{lemma:infsupconstants:munu}$\&$\cref{lemma:infsupconstants:alpha}$\&$\cref{lemma:infsupconstants:beta}
	 with $\zeta =2$, we are able to take 
	  $\tau >0$ and $\sigma >0$  small enough satisfying 
	 $\norm{K}^{2} <\frac{2\mu +\frac{1}{\tau}}{\sigma}$ and 
	 $\norm{K}^{2} < \frac{2 \nu +\frac{1}{\sigma}}{\tau}$, 
	 take $\alpha\in \left( \frac{1}{ 2 \mu \tau +1} , 1\right) \subseteq (0,1)$ satisfying 
	 $\alpha \geq  \frac{1}{2 \nu \sigma +1}$ and $\alpha \tau \sigma \norm{K}^{2} <1$,
	 take $\beta$ satisfying
	 $\sigma \norm{K}^{2}\beta^{2} \leq 
	 \left(1-\alpha \tau \sigma \norm{K}^{2}\right)
	 \left(2\mu +\frac{1}{\tau} -\frac{1}{\alpha \tau}\right)$.
	Moreover, because $\alpha =\xi$, 
	the required condition 
	$\xi \in (0,1)$ and
	$\xi \sup_{k \in \mathbf{N}} \tau_{k} \sup_{k \in \mathbf{N}} \sigma_{k} 
	\norm{K}^{2}< 
	1$ in \cref{theorem:linearconverg:K} hold in our case. 
	
	Employing \cref{lemma:infsupconstants}\cref{lemma:infsupconstants:nu} 
	with $\zeta =2$ and $i=1$,  
	and using $\xi 
	=\alpha$,
	we satisfy
	$
	2 \nu \geq \frac{1}{\xi \inf_{k \in 
			\mathbf{N}}\sigma_{k}} -\frac{1}{\sup_{k \in \mathbf{N}}\sigma_{k}}  
	+ \eta_{2}\norm{K} \sup_{k \in \mathbf{N}}(\xi -\alpha_{k})^{2}
	$
	in \cref{theorem:linearconverg:K}.

	In addition, applying 
	\cref{lemma:infsupconstants}\cref{lemma:infsupconstants:eta4}$\&$\cref{lemma:infsupconstants:eta3:a}$\&$\cref{lemma:infsupconstants:eta1a}$\&$\cref{lemma:infsupconstants:eta2a}
	 with $\zeta =2$, 
	we are able to take $\eta_{4}$, $\eta_{3}$, $\eta_{1}$, and $\eta_{2}$ in
	 $\mathbf{R}_{++}$  such that 
	 $\eta_{4} \geq \tau$, $\alpha \sigma \eta_{4} \norm{K}^{2} < 1$,  
	 $\eta_{3} > \frac{\sigma \norm{K}}{ 1 -\alpha \tau \sigma \norm{K}^{2}}$,
	 $\eta_{3}\norm{K} \beta^{2} < 2 \mu +\frac{1}{\tau} 
	 -\frac{1}{\alpha \tau}$,
	 $\eta_{1} \geq \frac{\sigma \norm{K}\eta_{3}}{\eta_{3} -\sigma \norm{K}}$, 
	 $\alpha \tau \eta_{1}\norm{K} <1$, and
	 $\eta_{2} \geq \frac{\tau \norm{K}}{\alpha - \tau \eta_{1} \norm{K}\alpha^{2}}$;
	 moreover, our choices of $\eta_{4}$, $\eta_{3}$, $\eta_{1}$, and $\eta_{2}$ in
	 $\mathbf{R}_{++}$  satisfy the conditions 
	 $\eta_{4} \geq \sup_{i \in \mathbf{N}} \tau_{i}$,
	 $\frac{1}{\sup_{i \in \mathbf{N}}\sigma_{i}} -\xi \eta_{4} \norm{K}^{2} >0$,
	 $
	 2\mu\geq \frac{1}{\xi \inf_{k \in \mathbf{N}} 
	 	\tau_{k}} -\frac{1}{\sup_{k \in \mathbf{N}}\tau_{k}}+  \eta_{3} \norm{K} \sup_{k 
	 	\in \mathbf{N}}\beta_{k}^{2}
	 $,
	 $\norm{K} \left(\frac{1}{\eta_{1}} +\frac{1}{\eta_{3}}\right) \leq \frac{1}{\sup_{i \in 
	 		\mathbf{N}}\sigma_{i}}$, and
	 $\norm{K} \left( \eta_{1} \xi^{2} +\frac{1}{\eta_{2}}\right) \leq \frac{\xi}{\sup_{i \in 
	 		\mathbf{N}}\tau_{i}}$.

Altogether, our assumptions implies all requirements of \cref{theorem:linearconverg:K}. 
Therefore, 
employing \cref{theorem:linearconverg:K} with $\xi =\alpha$,
we derive the desired linear convergence result.  
\end{proof}

\begin{corollary} \label{corollary:linearconverg:Ksquare}
 Let $(x^{*},y^{*})$ be a saddle-point of $f$. Suppose that 
 \begin{align}  \label{eq:corollary:linearconverg:Ksquare:constant}
 	(\forall k \in \mathbf{N} \cup \{-1\}) \quad \tau_{k} \equiv \tau \in \mathbf{R}_{++}, 
 	\sigma_{k} \equiv  \sigma \in \mathbf{R}_{++}, 
 	\alpha_{k} \equiv \alpha \in \mathbf{R}_{++}, \text{ and } 
 	\beta_{k} \equiv \beta \in \mathbf{R}_{+}.
 \end{align}
Recall that $\mu>0$, $\nu>0$, $\norm{K} \geq 0$ are known. 
Let $\tau >0$ and $\sigma >0$ be small enough such that 
$\norm{K}^{2} <\frac{2\mu +\frac{1}{\tau}}{\sigma}$ and 
$\norm{K}^{2} < \frac{2 \nu +\frac{1}{\sigma}}{\tau}$. 
Let $\alpha$ be in $\left( \frac{1}{ 2 \mu \tau +1} , 1\right)$ such that $\alpha \geq 
\frac{1}{2 \nu \sigma +1}$, and $\alpha \tau \sigma \norm{K}^{2} <1$.
Let $\beta$ satisfy  $\norm{K}^{2} \beta^{2} \sigma < \left(2\mu 
+\frac{1}{\tau}-\frac{1}{\alpha 
\tau} \right)\left( 1-\alpha \tau \sigma \norm{K}^{2}\right)
$.
Let $\eta_{3}$ be in $\mathbf{R}_{++}$ such that 
$\eta_{3} >\frac{\sigma}{1-\alpha \tau \sigma \norm{K}^{2}}$
and $\norm{K}^{2}\beta^{2}\eta_{3} \leq 2\mu +\frac{1}{\tau} -\frac{1}{\alpha \tau}$.
Let $\eta_{4}$ be in $\mathbf{R}_{++}$ such that 
$\tau \leq \eta_{4}$ and $\alpha \sigma \eta_{4}\norm{K}^{2} <1$.
Let $\eta_{1}$ be in $\mathbf{R}_{++}$ such that 
$\eta_{1} \geq \frac{\sigma \eta_{3}}{\eta_{3} -\sigma}$
and $\alpha \tau \eta_{1}\norm{K}^{2} <1$.
Let $\eta_{2}$ be in $\mathbf{R}_{++}$ such that 
$\eta_{2} \geq \frac{\tau}{\alpha(1-\alpha \tau \eta_{1}\norm{K}^{2})}$.
Then we have that  for every $k \in \mathbf{N}$,
\begin{align*}
	&\norm{x^{k+1}-x^{*}} 
	\leq \alpha^{k+1}  \tau 
	\left(\frac{1}{ \tau_{0}} \norm{x^{*} -x^{0}}^{2} +\frac{1}{\sigma_{0}} 
	\norm{y^{*} -y^{0}}^{2}\right);\\
	&\norm{y^{k+1}-y^{*}} 
	\leq \alpha^{k+1} \left(\frac{1}{\sigma} -\alpha \eta_{4}\norm{K}^{2}\right)
	\left(\frac{1}{ \tau_{0}} \norm{x^{*} -x^{0}}^{2} +\frac{1}{\sigma_{0}} 
	\norm{y^{*} -y^{0}}^{2}\right).
\end{align*}
Consequently,  the sequence of iterations $\left((x^{k},y^{k})\right)_{k \in 
	\mathbf{N}}$ 
converges linearly  to the saddle-point $(x^{*},y^{*})$ of $f$ 
with a  rate $\alpha \in (0,1)$. 
\end{corollary}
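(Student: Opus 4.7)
The plan is to mirror the argument used for \cref{corollary:linearconverg:K}, but invoke \cref{theorem:linearconverg:Ksquare} in place of \cref{theorem:linearconverg:K} and replace each \textbf{(a)}-variant of \cref{lemma:infsupconstants} by its corresponding \textbf{(b)}-variant (which is precisely the version with $\norm{K}^{2}$ rather than $\norm{K}$). First I would set $\xi = \alpha$. Then thanks to the constant-parameter assumption \cref{eq:corollary:linearconverg:Ksquare:constant} together with $\alpha \in (0,1)$ and $\alpha\tau\sigma\norm{K}^{2}<1$, the requirements $\xi \in (0,1)$ and $\xi \sup_{k}\tau_{k}\sup_{k}\sigma_{k}\norm{K}^{2}<1$ of \cref{theorem:linearconverg:Ksquare} are immediate, and $\inf_{k}\tau_{k}=\sup_{k}\tau_{k}=\tau>0$, $\inf_{k}\sigma_{k}=\sup_{k}\sigma_{k}=\sigma>0$.

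Next I would verify the remaining hypotheses of \cref{theorem:linearconverg:Ksquare} using \cref{lemma:infsupconstants} with $\zeta=2$. The choices of $\tau,\sigma,\alpha,\beta$ in the corollary are exactly those produced by \cref{lemma:infsupconstants}\cref{lemma:infsupconstants:munu}\cref{lemma:infsupconstants:alpha}\cref{lemma:infsupconstants:beta}. The $\eta_{4}$ coming from \cref{lemma:infsupconstants}\cref{lemma:infsupconstants:eta4} then yields both $\eta_{4}\geq \sup_{i}\tau_{i}$ and $\frac{1}{\sup_{i}\sigma_{i}}-\xi \eta_{4}\norm{K}^{2}>0$ (since $\xi=\alpha$). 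Because \cref{eq:corollary:linearconverg:Ksquare:constant} forces $\sup_{k}(\alpha-\alpha_{k})^{2}=0$, the constraint
\[
2\nu \geq \frac{1}{\xi \inf_{k}\sigma_{k}} - \frac{1}{\sup_{k}\sigma_{k}} + \eta_{2}\norm{K}^{2}\sup_{k}(\xi-\alpha_{k})^{2}
\]
reduces to $\alpha \geq \frac{1}{2\nu\sigma+1}$, which is supplied by \cref{lemma:infsupconstants}\cref{lemma:infsupconstants:nu} with $i=2$. Applying \cref{lemma:infsupconstants}\cref{lemma:infsupconstants:eta3:b} with $\zeta=2$ produces an $\eta_{3}$ satisfying both $\eta_{3}>\frac{\sigma}{1-\alpha\tau\sigma\norm{K}^{2}}$ and the required lower bound on $2\mu$ involving $\eta_{3}\norm{K}^{2}\sup_{k}\beta_{k}^{2}$.

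For the remaining two parameters I would use the \textbf{(b)}-variants: \cref{lemma:infsupconstants}\cref{lemma:infsupconstants:eta1b} delivers $\eta_{1}$ satisfying $\alpha\tau\eta_{1}\norm{K}^{2}<1$ together with $\frac{1}{\eta_{1}}+\frac{1}{\eta_{3}}\leq \frac{1}{\sup_{k}\sigma_{k}}$, and \cref{lemma:infsupconstants}\cref{lemma:infsupconstants:eta2b} delivers $\eta_{2}$ satisfying $\eta_{1}\norm{K}^{2}\alpha + \frac{1}{\eta_{2}\alpha}\leq \frac{1}{\sup_{k}\tau_{k}}$, which, after multiplying through by $\alpha=\xi$, is exactly the hypothesis $\eta_{1}\norm{K}^{2}\xi + \frac{1}{\eta_{2}\xi}\leq \frac{1}{\sup_{k}\tau_{k}}$ demanded by \cref{theorem:linearconverg:Ksquare}. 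At that point every hypothesis of \cref{theorem:linearconverg:Ksquare} is in force, so its conclusion applied with $\xi=\alpha$ gives the stated bounds on $\norm{x^{k+1}-x^{*}}$ and $\norm{y^{k+1}-y^{*}}$, and hence linear convergence of $\left((x^{k},y^{k})\right)_{k\in\mathbf{N}}$ to $(x^{*},y^{*})$ with rate $\alpha\in(0,1)$.

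There is no substantial conceptual obstacle here; the proof is essentially a bookkeeping exercise. The only point that requires a modicum of care is the translation between the \textbf{(a)}- and \textbf{(b)}-variants of \cref{lemma:infsupconstants} and, in particular, correctly matching the $\norm{K}$-versus-$\norm{K}^{2}$ factors when converting the output of \cref{lemma:infsupconstants}\cref{lemma:infsupconstants:eta2b} (which is phrased in the form $\eta_{1}\norm{K}^{2}\alpha + \frac{1}{\eta_{2}\alpha}\leq \frac{1}{\sup_{k}\tau_{k}}$) to the form $\eta_{1}\norm{K}^{2}\xi + \frac{1}{\eta_{2}\xi}\leq \frac{1}{\sup_{k}\tau_{k}}$ required by \cref{theorem:linearconverg:Ksquare}; the substitution $\xi=\alpha$ makes this automatic.
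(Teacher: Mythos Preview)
Your proposal is correct and follows essentially the same approach as the paper: set $\xi=\alpha$, invoke \cref{lemma:infsupconstants}\cref{lemma:infsupconstants:munu}\cref{lemma:infsupconstants:alpha}\cref{lemma:infsupconstants:beta}\cref{lemma:infsupconstants:nu}\cref{lemma:infsupconstants:eta4}\cref{lemma:infsupconstants:eta3:b}\cref{lemma:infsupconstants:eta1b}\cref{lemma:infsupconstants:eta2b} with $\zeta=2$ (and $i=2$ for \cref{lemma:infsupconstants:nu}) to translate the corollary's hypotheses into those of \cref{theorem:linearconverg:Ksquare}, and then apply that theorem. The only cosmetic difference is that the paper appeals to ``the beginning part of the proof of \cref{corollary:linearconverg:K}'' rather than re-listing \cref{lemma:infsupconstants}\cref{lemma:infsupconstants:munu}\cref{lemma:infsupconstants:alpha}\cref{lemma:infsupconstants:beta} explicitly.
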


\begin{proof}
	Consider  $\xi=\alpha$ in \cref{theorem:linearconverg:Ksquare}. 
	
	Similarly with the beginning part of the proof of \cref{corollary:linearconverg:K}, 
	we are able to take 
	$\tau >0$ and $\sigma >0$  small enough satisfying 
	$\norm{K}^{2} <\frac{2\mu +\frac{1}{\tau}}{\sigma}$ and 
	$\norm{K}^{2} < \frac{2 \nu +\frac{1}{\sigma}}{\tau}$, 
	take $\alpha\in \left( \frac{1}{ 2 \mu \tau +1} , 1\right) \subseteq (0,1)$ satisfying 
	$\alpha \geq  \frac{1}{2 \nu \sigma +1}$ and $\alpha \tau \sigma \norm{K}^{2} <1$,
	take $\beta$ satisfying
	$\sigma \norm{K}^{2}\beta^{2} \leq 
	\left(1-\alpha \tau \sigma \norm{K}^{2}\right)
	\left(2\mu +\frac{1}{\tau} -\frac{1}{\alpha \tau}\right)$.
	Moreover, using $\alpha =\xi$, we derive
	the condition 
	$\xi \in (0,1)$ and
	$\xi \sup_{k \in \mathbf{N}} \tau_{k} \sup_{k \in \mathbf{N}} \sigma_{k} 
	\norm{K}^{2}< 
	1$ required  in \cref{theorem:linearconverg:Ksquare} hold in our case.

	Applying \cref{lemma:infsupconstants}\cref{lemma:infsupconstants:nu} 
	with $\zeta =2$ and $i=2$,  
	and using $\xi 
	=\alpha$,
	we get
	$
	2 \nu \geq \frac{1}{\xi \inf_{k \in 
			\mathbf{N}}\sigma_{k}} -\frac{1}{\sup_{k \in \mathbf{N}}\sigma_{k}}  
	+ \eta_{2}\norm{K}^{2} \sup_{k \in \mathbf{N}}(\xi -\alpha_{k})^{2}
	$
	required in \cref{theorem:linearconverg:Ksquare}. 
	
	Furthermore, 
	employing 
	\cref{lemma:infsupconstants}\cref{lemma:infsupconstants:eta4}$\&$\cref{lemma:infsupconstants:eta3:b}$\&$\cref{lemma:infsupconstants:eta1b}$\&$\cref{lemma:infsupconstants:eta2b},
with $\zeta =2$, we are able to take 
$\eta_{4}$, $\eta_{3}$, $\eta_{1}$, and $\eta_{2}$ in
$\mathbf{R}_{++}$  such that 
$\tau \leq \eta_{4}$, $\alpha \sigma \eta_{4}\norm{K}^{2} <1$,
$\eta_{3} >\frac{\sigma}{1-\alpha \tau \sigma \norm{K}^{2}}$,
$\norm{K}^{2}\beta^{2}\eta_{3} \leq 2\mu +\frac{1}{\tau} -\frac{1}{\alpha \tau}$,
$\eta_{1} \geq \frac{\sigma \eta_{3}}{\eta_{3} -\sigma}$,
$\alpha \tau \eta_{1}\norm{K}^{2} <1$,
and
$\eta_{2} \geq \frac{\tau}{\alpha(1-\alpha \tau \eta_{1}\norm{K}^{2})}$,
which ensures the conditions
$ \sup_{k \in \mathbf{N}}\tau_{k} \leq \eta_{4}$,
 $\eta_{4}\xi \norm{K}^{2} <\frac{1}{ \sup_{k \in \mathbf{N}}\sigma_{k}}$,
$
2\mu\geq \frac{1}{\xi \inf_{k \in \mathbf{N}} 
	\tau_{k}} -\frac{1}{\sup_{k \in \mathbf{N}}\tau_{k}}+  \eta_{3} \norm{K}^{2}\sup_{k 
	\in \mathbf{N}}\beta_{k}^{2}
$,
and 
$\eta_{1} \norm{K}^{2}\xi +\frac{1}{\eta_{2}\xi} \leq \frac{1}{\sup_{k \in 
		\mathbf{N}}\tau_{k}}$
	required in \cref{theorem:linearconverg:Ksquare}. 
	
Altogether, all requirements in  \cref{theorem:linearconverg:Ksquare} hold 
in our case. 
Applying \cref{theorem:linearconverg:Ksquare} with $\xi =\alpha$,
we obtain the required linear convergence result. 
\end{proof}

\section{Linear Convergence of  Function Values} 
\label{section:linearconvergf}
From now on, let $((x^{k},y^{k}))_{k \in \mathbf{N}}$ be generated by the iterate 
scheme \cref{eq:fact:algorithmsymplity}, set $\xi$ being in $\mathbf{R}_{++}$, and 
define
\begin{align}  \label{eq:x_ky_k}
	(\forall k \in \mathbf{N}) 
	~ \hat{x}_{k} := \frac{1}{\sum^{k}_{j=0}\frac{1}{\xi^{j}}} \sum^{k}_{i=0} \frac{1}{\xi^{i}} 
	x^{i+1}    
\quad	\text{and}  \quad 
\hat{y}_{k} := \frac{1}{\sum^{k}_{j=0}\frac{1}{\xi^{j}}}  \sum^{k}_{i=0} \frac{1}{\xi^{i}} 
y^{i+1}.
\end{align}

In this section, we present linear convergence of 
function values evaluated over the sequence  $\left( \left(\hat{x}_{k}, \hat{y}_{k}  
\right)\right)_{k \in \mathbf{N}}$ 
presented in \cref{eq:x_ky_k} above.

\begin{lemma} \label{lemma:stronglyconvex}
	Let $(x,y)$ be in $\mathcal{H}_{1} \times \mathcal{H}_{2}$, 
	and let $k$ be in $\mathbf{N}$. 
	We have the following results. 
	\begin{enumerate}
		\item \label{lemma:stronglyconvex:x} 
		$f(\hat{x}_{k},y) -\frac{\mu}{2} \norm{\hat{x}_{k}}^{2} \leq 
		 \frac{1}{\sum^{k}_{j=0}\frac{1}{\xi^{j}}}  \sum^{k}_{i=0} \frac{1}{\xi^{i}} 
		 \left(f(x^{i+1},y) -\frac{\mu}{2} \norm{x^{i+1}}^{2}\right)$.  
		\item \label{lemma:stronglyconvex:y} 
		$-f(x, \hat{y}_{k}) -\frac{\nu}{2} \norm{ \hat{y}_{k}}^{2} \leq 
		\frac{1}{\sum^{k}_{j=0}\frac{1}{\xi^{j}}}  \sum^{k}_{i=0} \frac{1}{\xi^{i}} 
		\left(-f(x,y^{i+1}) -\frac{\nu}{2} \norm{y^{i+1}}^{2}\right)$.  
		\item \label{lemma:stronglyconvex:normx} 
		$ \frac{1}{\sum^{k}_{j=0}\frac{1}{\xi^{j}}}  \sum^{k}_{i=0} \frac{1}{\xi^{i}} 
		\norm{x^{i+1}}^{2} -
		\norm{\hat{x}_{k}}^{2} 
		= \frac{1}{\sum^{k}_{j=0}\frac{1}{\xi^{j}}}  \sum^{k}_{i=0} \frac{1}{\xi^{i}} 
		\norm{x^{i+1} - \hat{x}_{k}}^{2}
		$.
	\item \label{lemma:stronglyconvex:normy}
		$ \frac{1}{\sum^{k}_{j=0}\frac{1}{\xi^{j}}}  \sum^{k}_{i=0} \frac{1}{\xi^{i}} 
		\norm{y^{i+1}}^{2} -
		\norm{\hat{y}_{k}}^{2} 
		= \frac{1}{\sum^{k}_{j=0}\frac{1}{\xi^{j}}}  \sum^{k}_{i=0} \frac{1}{\xi^{i}} 
		\norm{y^{i+1} - \hat{y}_{k}}^{2}
		$.
		\item \label{lemma:stronglyconvex:ineq} 
		$f(\hat{x}_{k},y) - f(x, \hat{y}_{k})  
		\leq \frac{1}{\sum^{k}_{j=0}\frac{1}{\xi^{j}}}  \sum^{k}_{i=0} \frac{1}{\xi^{i}} 
		\left( f(x^{i+1},y)-f(x,y^{i+1})   \right) 
		- \frac{\mu}{2\sum^{k}_{j=0}\frac{1}{\xi^{j}}}  \sum^{k}_{i=0} \frac{1}{\xi^{i}} 
		\norm{x^{i+1} - \hat{x}_{k}}^{2}
		- \frac{\nu}{2\sum^{k}_{j=0}\frac{1}{\xi^{j}}}  \sum^{k}_{i=0} \frac{1}{\xi^{i}} 
		\norm{y^{i+1} - \hat{y}_{k}}^{2}
		\leq \frac{1}{\sum^{k}_{j=0}\frac{1}{\xi^{j}}}  
		\sum^{k}_{i=0} \frac{1}{\xi^{i}} 
		\left( f(x^{i+1},y)-f(x,y^{i+1})   \right) $.
	\end{enumerate}
\end{lemma}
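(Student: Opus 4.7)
The plan is to prove each of the five items in turn, with items (i)-(iv) serving as building blocks for the main assertion (v).

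For item (i), the key observation is that $\mu$-strong convexity of $g$ (cf.\,\cref{assumption:basic}) implies that $x \mapsto g(x) - \frac{\mu}{2}\norm{x}^{2}$ is convex, and therefore, recalling \cref{eq:fspecialdefine}, the map $x \mapsto f(x,y) - \frac{\mu}{2}\norm{x}^{2} = \innp{Kx,y} + \left(g(x) - \frac{\mu}{2}\norm{x}^{2}\right) - h(y)$ is convex as well. Since the coefficients $\left(\frac{1/\xi^{i}}{\sum^{k}_{j=0} 1/\xi^{j}}\right)_{i=0}^{k}$ are nonnegative and sum to one, applying Jensen's inequality to this convex function evaluated at the $x^{i+1}$'s yields the desired bound. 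Item (ii) is handled symmetrically: $\nu$-strong convexity of $h$ makes $y \mapsto -f(x,y) - \frac{\nu}{2}\norm{y}^{2} = -\innp{Kx,y} - g(x) + \left(h(y) - \frac{\nu}{2}\norm{y}^{2}\right)$ convex in $y$, and Jensen again delivers the inequality.

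For items (iii) and (iv), these are the standard ``variance'' identities for convex combinations. Setting $\lambda_{i} := \frac{1/\xi^{i}}{\sum^{k}_{j=0} 1/\xi^{j}}$, one checks that $\sum_{i} \lambda_{i} = 1$ and expands
\begin{align*}
\sum_{i=0}^{k} \lambda_{i} \norm{x^{i+1} - \hat{x}_{k}}^{2}
&= \sum_{i=0}^{k} \lambda_{i} \left(\norm{x^{i+1}}^{2} - 2\innp{x^{i+1}, \hat{x}_{k}} + \norm{\hat{x}_{k}}^{2}\right) \\
&= \sum_{i=0}^{k} \lambda_{i}\norm{x^{i+1}}^{2} - 2\innp{\hat{x}_{k}, \hat{x}_{k}} + \norm{\hat{x}_{k}}^{2} = \sum_{i=0}^{k} \lambda_{i}\norm{x^{i+1}}^{2} - \norm{\hat{x}_{k}}^{2},
\end{align*}
where the second equality uses the definition $\hat{x}_{k} = \sum_{i} \lambda_{i} x^{i+1}$. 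Item (iv) follows by replacing $x$ with $y$.

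Finally, for item (v), I would add the inequalities from (i) and (ii) to obtain
\[
f(\hat{x}_{k},y) - f(x,\hat{y}_{k}) - \frac{\mu}{2}\norm{\hat{x}_{k}}^{2} - \frac{\nu}{2}\norm{\hat{y}_{k}}^{2} \leq \frac{1}{\sum^{k}_{j=0} 1/\xi^{j}}\sum^{k}_{i=0}\frac{1}{\xi^{i}}\left(f(x^{i+1},y) - f(x,y^{i+1}) - \frac{\mu}{2}\norm{x^{i+1}}^{2} - \frac{\nu}{2}\norm{y^{i+1}}^{2}\right).
\]
Then I would use identities (iii) and (iv) to substitute
\[
\frac{\mu}{2\sum^{k}_{j=0}1/\xi^{j}}\sum^{k}_{i=0}\frac{1}{\xi^{i}}\norm{x^{i+1}}^{2} - \frac{\mu}{2}\norm{\hat{x}_{k}}^{2} = \frac{\mu}{2\sum^{k}_{j=0}1/\xi^{j}}\sum^{k}_{i=0}\frac{1}{\xi^{i}}\norm{x^{i+1} - \hat{x}_{k}}^{2}
\]
and the analogous identity for $y$, rearranging to bring the strong-convexity residuals to the right-hand side with the correct sign. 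The final inequality in item (v) is immediate from the nonnegativity of $\mu, \nu$ and of the norm-squared terms. The proof is primarily bookkeeping; I do not anticipate any substantive obstacle beyond keeping track of the weights and signs, since the strong convexity of $g$ and $h$ combined with Jensen's inequality do all the real work.
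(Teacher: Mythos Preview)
Your proposal is correct and follows essentially the same approach as the paper: for (i)--(ii) the paper likewise uses that $f(\cdot,y)$ (resp.\ $-f(x,\cdot)$) is $\mu$-strongly (resp.\ $\nu$-strongly) convex and applies convexity of the function minus $\tfrac{\mu}{2}\norm{\cdot}^{2}$ at the convex combination $\hat{x}_{k}$; for (iii)--(iv) the paper carries out the same norm expansion you wrote; and for (v) the paper also adds (i)+(ii) and substitutes via (iii)--(iv). Your write-up is slightly more explicit about invoking Jensen's inequality, but the argument is the same.
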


\begin{proof}
	\cref{lemma:stronglyconvex:x}: Based on our assumption \cref{eq:gmu} in 
	\cref{assumption:basic}, we know that 
	$g$ is strongly convex with a modulus $\mu >0$.
	Hence, it is immediate to see that   the function 
	 $f(\cdot,y) =\innp{K(\cdot), y} + 
	g(\cdot)  -h(y)$ is strongly convex with a modulus $\mu >0$, which,
	via \cref{eq:x_ky_k}, 
	 ensures the required inequality in 	\cref{lemma:stronglyconvex:x}.
	
	\cref{lemma:stronglyconvex:y}: 
	Similarly with our proof of \cref{lemma:stronglyconvex:x},
	as presented in \cref{eq:hnu} in \cref{assumption:basic}, we know that 
	$h$ is strongly convex with a modulus $\nu >0$. 
	So we have that the function 
	$-f(x,\cdot) =-\innp{Kx, \cdot} - 
	g(x)  +h(\cdot)$ is strongly convex with a modulus $\nu >0$,
	which, due to \cref{eq:x_ky_k},  immediately derive the desired result in 
	\cref{lemma:stronglyconvex:y}.
	
	\cref{lemma:stronglyconvex:normx}$\&$\cref{lemma:stronglyconvex:normy}:
	To prove \cref{lemma:stronglyconvex:normx} and 
	\cref{lemma:stronglyconvex:normy},
	it suffices to prove \cref{lemma:stronglyconvex:normx} because replacing 
	$(\forall i \in \{0,1,\cdots, k\})$ $x^{i+1}$ in \cref{lemma:stronglyconvex:normx}
	by $y^{i+1}$,
	we get \cref{lemma:stronglyconvex:normy}.
	
	It is clear that
	\begin{align*}
		 &\frac{1}{\sum^{k}_{j=0}\frac{1}{\xi^{j}}}  \sum^{k}_{i=0} \frac{1}{\xi^{i}} 
		\norm{x^{i+1}}^{2} - \norm{\hat{x}_{k}}^{2}\\
		 =& 
		 \frac{1}{\sum^{k}_{j=0}\frac{1}{\xi^{j}}}  \sum^{k}_{i=0} \frac{1}{\xi^{i}} 
		\norm{x^{i+1}}^{2} - 2 \norm{\hat{x}_{k}}^{2} +
		\norm{\hat{x}_{k}}^{2} \\
		\stackrel{\cref{eq:x_ky_k}}{=}&
		\frac{1}{\sum^{k}_{j=0}\frac{1}{\xi^{j}}}  \sum^{k}_{i=0} \frac{1}{\xi^{i}} 
		\norm{x^{i+1}}^{2} -
		2 \innp{ \frac{1}{\sum^{k}_{j=0}\frac{1}{\xi^{j}}}  \sum^{k}_{i=0} \frac{1}{\xi^{i}} 
		x^{i+1},  \hat{x}_{k}} +  \frac{1}{\sum^{k}_{j=0}\frac{1}{\xi^{j}}}  \sum^{k}_{i=0} 
		\frac{1}{\xi^{i}}\norm{\hat{x}_{k}}^{2} \\
		=& \frac{1}{\sum^{k}_{j=0}\frac{1}{\xi^{j}}}  \sum^{k}_{i=0} \frac{1}{\xi^{i}} 
	\left(\norm{x^{i+1}}^{2}  -2\innp{x^{i+1}, \hat{x}_{k}} +\norm{\hat{x}_{k}}^{2}\right)\\
	=&	 \frac{1}{\sum^{k}_{j=0}\frac{1}{\xi^{j}}}  \sum^{k}_{i=0} \frac{1}{\xi^{i}} 
	\norm{x^{i+1} - \hat{x}_{k}}^{2}.
	\end{align*}
	
	\cref{lemma:stronglyconvex:ineq}:    
	Add the two inequalities in \cref{lemma:stronglyconvex:x} and 
	\cref{lemma:stronglyconvex:y} to observe that
	\begin{align*}
		&f(\hat{x}_{k},y) - f(x, \hat{y}_{k})  \\
		\leq &\frac{1}{\sum^{k}_{j=0}\frac{1}{\xi^{j}}}  \sum^{k}_{i=0} \frac{1}{\xi^{i}} 
		\left(f(x^{i+1},y) -\frac{\mu}{2} \norm{x^{i+1}}^{2}\right)
		+\frac{1}{\sum^{k}_{j=0}\frac{1}{\xi^{j}}}  \sum^{k}_{i=0} \frac{1}{\xi^{i}} 
		\left(-f(x,y^{i+1}) -\frac{\nu}{2} \norm{y^{i+1}}^{2}\right)\\
		&+\frac{\mu}{2} \norm{\hat{x}_{k}}^{2}+\frac{\nu}{2} \norm{\hat{y}_{k}}^{2}\\
		=& \frac{1}{\sum^{k}_{j=0}\frac{1}{\xi^{j}}}  \sum^{k}_{i=0} \frac{1}{\xi^{i}} 
		\left( f(x^{i+1},y)-f(x,y^{i+1})   \right) 
		- \frac{\mu}{2}\left(  \frac{1}{\sum^{k}_{j=0}\frac{1}{\xi^{j}}}  \sum^{k}_{i=0} 
		\frac{1}{\xi^{i}} 
		\norm{x^{i+1}}^{2} -
		\norm{\hat{x}_{k}}^{2} \right)\\
		&- \frac{\nu}{2} \left(  \frac{1}{\sum^{k}_{j=0}\frac{1}{\xi^{j}}}  \sum^{k}_{i=0} 
		\frac{1}{\xi^{i}} 
		\norm{y^{i+1}}^{2} -
		\norm{\hat{y}_{k}}^{2}  \right)\\
		=&\frac{1}{\sum^{k}_{j=0}\frac{1}{\xi^{j}}}  \sum^{k}_{i=0} \frac{1}{\xi^{i}} 
		\left( f(x^{i+1},y)-f(x,y^{i+1})   \right) 
		- \frac{\mu}{2\sum^{k}_{j=0}\frac{1}{\xi^{j}}}  \sum^{k}_{i=0} \frac{1}{\xi^{i}} 
		\norm{x^{i+1} - \hat{x}_{k}}^{2}\\
		&- \frac{\nu}{2\sum^{k}_{j=0}\frac{1}{\xi^{j}}}  \sum^{k}_{i=0} \frac{1}{\xi^{i}} 
		\norm{y^{i+1} - \hat{y}_{k}}^{2},
	\end{align*} 
where in the second equality, we use 
\cref{lemma:stronglyconvex}\cref{lemma:stronglyconvex:normx}$\&$\cref{lemma:stronglyconvex:normy}.
\end{proof}

\begin{lemma} \label{lemma:fxkyfxyk}
	Let $(x,y)$ be in $\mathcal{H}_{1} \times \mathcal{H}_{2}$. 
	Then for every $k \in \mathbf{N}$, we have that
	\begin{align*}
		&f(x^{k+1},y) -f(x,y^{k+1})\\
	\leq & \frac{1}{2 \tau_{k}} \norm{x -x^{k}}^{2} +\frac{1}{2\sigma_{k}} \norm{y 
		-y^{k}}^{2} - \left(\frac{\mu}{2} +\frac{1}{2\tau_{k}}\right)\norm{x -x^{k+1}}^{2} 
	-\left( \frac{\nu}{2} +\frac{1}{2 \sigma_{k}}\right) \norm{y -y^{k+1}}^{2}\\
&- \frac{1}{2\tau_{k}} \norm{x^{k} -x^{k+1}}^{2} 
- \frac{1}{2 \sigma_{k}} \norm{y^{k} -y^{k+1}}^{2}\\ 
&-\innp{K(x^{k+1}-\bar{x}^{k}),y^{k+1}-y}
+\innp{K(x^{k+1}-x),y^{k+1}-\bar{y}^{k+1}}.
	\end{align*}
\end{lemma}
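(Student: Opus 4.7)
The statement to be proven is a direct algebraic consequence of two results already established in \cref{lemma:partialinequalities}, so my plan is essentially to combine \cref{lemma:partialinequalities}\cref{lemma:partialinequalities:sum} with \cref{lemma:partialinequalities}\cref{lemma:partialinequalities:fK} and then rearrange the resulting inequality to isolate $f(x^{k+1},y)-f(x,y^{k+1})$ on the left-hand side.

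More concretely, I would first fix $k \in \mathbf{N}$ and $(x,y) \in \mathcal{H}_{1} \times \mathcal{H}_{2}$, then invoke \cref{lemma:partialinequalities}\cref{lemma:partialinequalities:sum} to obtain the lower bound
\begin{align*}
    \frac{1}{2 \tau_{k}} \norm{x -x^{k}}^{2} +\frac{1}{2\sigma_{k}} \norm{y -y^{k}}^{2}
    &\geq \left(\frac{\mu}{2} +\frac{1}{2\tau_{k}}\right)\norm{x -x^{k+1}}^{2}+\left( \frac{\nu}{2} +\frac{1}{2 \sigma_{k}}\right) \norm{y -y^{k+1}}^{2}\\
    &\quad + \frac{1}{2\tau_{k}} \norm{x^{k} -x^{k+1}}^{2} + \frac{1}{2 \sigma_{k}} \norm{y^{k} -y^{k+1}}^{2}\\
    &\quad +\innp{K \bar{x}^{k}, y-y^{k+1}}-\innp{\bar{y}^{k+1}, K(x-x^{k+1})}\\
    &\quad +g(x^{k+1})-h(y) +h(y^{k+1})-g(x).
\end{align*}

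Next I would apply the identity from \cref{lemma:partialinequalities}\cref{lemma:partialinequalities:fK} to rewrite the last two lines of the display above as
\begin{align*}
    f(x^{k+1},y) -f(x,y^{k+1})+\innp{K(x^{k+1}-\bar{x}^{k}),y^{k+1}-y}-\innp{K(x^{k+1}-x),y^{k+1}-\bar{y}^{k+1}}.
\end{align*}
Substituting this equality into the inequality from the previous step then yields an inequality whose right-hand side contains the term $f(x^{k+1},y)-f(x,y^{k+1})$ additively, and all other terms match (up to sign) those in the target inequality. Finally I would move $f(x^{k+1},y)-f(x,y^{k+1})$ to the left side and move the four norm-squared terms and the two inner products to the right side, flipping their signs, which gives exactly the claimed bound.

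I do not anticipate any genuine obstacle here; no new estimates are needed, only a careful bookkeeping step in which one checks that the two inner products $\innp{K(x^{k+1}-\bar{x}^{k}),y^{k+1}-y}$ and $-\innp{K(x^{k+1}-x),y^{k+1}-\bar{y}^{k+1}}$ arising from \cref{lemma:partialinequalities:fK} pick up the correct signs after being moved across the inequality. The result is essentially a rearrangement of the fundamental one-step inequality \cref{lemma:partialinequalities:sum} combined with the algebraic identity \cref{lemma:partialinequalities:fK}, and no use of the strong convexity assumption \cref{assumption:basic} beyond what is already embedded in \cref{lemma:partialinequalities}\cref{lemma:partialinequalities:sum} is required.
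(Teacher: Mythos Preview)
Your proposal is correct and follows essentially the same approach as the paper: the paper's own proof likewise invokes \cref{lemma:partialinequalities}\cref{lemma:partialinequalities:sum} and \cref{lemma:partialinequalities}\cref{lemma:partialinequalities:fK}, substitutes the latter identity into the former inequality, and then rearranges to isolate $f(x^{k+1},y)-f(x,y^{k+1})$.
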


\begin{proof}
	Let $k$ be in $\mathbf{N}$. 
	According to 
	\cref{lemma:partialinequalities}\cref{lemma:partialinequalities:sum}$\&$\cref{lemma:partialinequalities:fK},
	 we observe that
		\begin{align*}
		&\frac{1}{2 \tau_{k}} \norm{x -x^{k}}^{2} +\frac{1}{2\sigma_{k}} \norm{y 
			-y^{k}}^{2}\\
		\geq &\left(\frac{\mu}{2} +\frac{1}{2\tau_{k}}\right)\norm{x -x^{k+1}}^{2}+\left( 
		\frac{\nu}{2} +\frac{1}{2 \sigma_{k}}\right) \norm{y -y^{k+1}}^{2}
		+ \frac{1}{2\tau_{k}} \norm{x^{k} -x^{k+1}}^{2} 
		+ \frac{1}{2 \sigma_{k}} \norm{y^{k} -y^{k+1}}^{2}\\
		&+\innp{K \bar{x}^{k}, y-y^{k+1}}-\innp{\bar{y}^{k+1}, K(x-x^{k+1})}+g(x^{k+1})
		-h(y) +h(y^{k+1})-g(x)\\
		= &\left(\frac{\mu}{2} +\frac{1}{2\tau_{k}}\right)\norm{x -x^{k+1}}^{2}+\left( 
		\frac{\nu}{2} +\frac{1}{2 \sigma_{k}}\right) \norm{y -y^{k+1}}^{2}
		+ \frac{1}{2\tau_{k}} \norm{x^{k} -x^{k+1}}^{2} 
		+ \frac{1}{2 \sigma_{k}} \norm{y^{k} -y^{k+1}}^{2}\\
		&+f(x^{k+1},y) -f(x,y^{k+1})+\innp{K(x^{k+1}-\bar{x}^{k}),y^{k+1}-y}
		-\innp{K(x^{k+1}-x),y^{k+1}-\bar{y}^{k+1}}.
	\end{align*}
After doing some rearrangement in the inequality derived above, 
we obtain the required inequality. 
\end{proof}

\begin{proposition} \label{prop:flinearconvergKsquare}
Let $(x,y)$ be in $\mathcal{H}_{1} \times \mathcal{H}_{2}$. 
	Let $\xi$, $\eta_{1}$, $\eta_{2}$, and $\eta_{3}$ be in $\mathbf{R}_{++}$ such that 
	one of the following condition holds. 
	\begin{enumerate}
		\item[(a)]   
		$(\forall k \in \mathbf{N})$ 
	$\mu +\frac{1}{\tau_{k}} - \eta_{3}\norm{K} \beta_{k}^{2}  \geq \frac{1}{\tau_{k+1} 
	\xi}$,
	$ \nu  +\frac{1}{\sigma_{k}} - \eta_{2} \norm{K}\left(\xi 
	-\alpha_{k-1} \right)^{2} \geq \frac{1}{\sigma_{k+1}\xi}$,
	$\norm{K} \left( \eta_{1} \xi^{2} + \frac{1}{\eta_{2}} 
	\right)  \leq \frac{\xi}{\tau_{k-1}}$,
	$\frac{1}{\sigma_{k}} -\frac{\norm{K}}{\eta_{1}} -\frac{\norm{K}}{\eta_{3}}  \geq 0$.
		\item[(b)]   
		$(\forall k \in \mathbf{N})$ 
		$\mu +\frac{1}{\tau_{k}} - \eta_{3}\norm{K}^{2}\beta_{k}^{2}  \geq 
		\frac{1}{\tau_{k+1} 
			\xi}$,
		$ \nu  +\frac{1}{\sigma_{k}} - \eta_{2} \norm{K}^{2}\left(\xi -\alpha_{k-1} 
		\right)^{2} \geq \frac{1}{\sigma_{k+1}\xi}$,
		$\eta_{1} \norm{K}^{2}\xi^{2} + \frac{1}{\eta_{2}}  \leq \frac{\xi}{\tau_{k-1}}$,
		and $\frac{1}{\sigma_{k}} -\frac{1}{\eta_{1}} -\frac{1}{\eta_{3}}  \geq 0$.
	\end{enumerate}
	We have the following statements. 
	\begin{enumerate}
		\item  \label{prop:flinearconvergKsquare:leq} 
		For every $k \in \mathbf{N}$, we have that
		\begin{align*}
			&f(x^{k+1},y) -f(x,y^{k+1})\\
\leq &
\frac{1}{2 \tau_{k}} \norm{x -x^{k}}^{2} +\frac{1}{2\sigma_{k}} \norm{y 
	-y^{k}}^{2} 
-\innp{K(x^{k+1}-x^{k}),y^{k+1}-y}
+\xi \innp{K(x^{k}-x^{k-1}),y^{k}-y} \\
&- \frac{1}{2 \tau_{k+1}\xi} \norm{x -x^{k+1}}^{2} - \frac{1}{2\sigma_{k+1}\xi} \norm{y 
	-y^{k+1}}^{2} 
- \frac{1}{2\tau_{k}} \norm{x^{k} -x^{k+1}}^{2} 
+ \frac{\xi}{2\tau_{k-1}} \norm{x^{k} -x^{k-1}}^{2}.
		\end{align*}
		\item  \label{prop:flinearconvergKsquare:sum} 
	Let $N$ be in $\mathbf{N}$, and let
		$\eta_{4}$ be in $\mathbf{R}_{++}$ such that $(\forall k \in \mathbf{N})$ 
		$\eta_{4} \geq \tau_{k}$ and $ \frac{1}{\sigma_{k+1}}- \xi \eta_{4}\norm{K}^{2} \geq 
		0$.
		Then  we have that
			\begin{align*}
			\sum^{ N}_{k=0} \frac{1}{\xi^{k}} \left( f(x^{k+1},y) -f(x,y^{k+1}) \right) 
			\leq   \frac{1}{2 \tau_{0}} \norm{x -x^{0}}^{2} 
			+\frac{1}{2\sigma_{0}} \norm{y -y^{0}}^{2}. 
		\end{align*}	
	\end{enumerate}
\end{proposition}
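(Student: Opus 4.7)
The plan is to derive part (i) by combining \cref{lemma:fxkyfxyk} with one of the two Cauchy-Schwarz bounds in \cref{lemma:inequalityCS}, and then obtain part (ii) by a telescoping argument after multiplying part (i) by $1/\xi^{k}$ and summing, finally absorbing the lone surviving boundary inner product via \cref{lemma:inequalityCS}\cref{lemma:inequalityCS:Ksquareeta4}.

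For \cref{prop:flinearconvergKsquare:leq}, I would start from the inequality in \cref{lemma:fxkyfxyk}, which already bounds $f(x^{k+1},y)-f(x,y^{k+1})$ in terms of the norms $\|x-x^{k}\|^{2}$, $\|y-y^{k}\|^{2}$, $\|x-x^{k+1}\|^{2}$, $\|y-y^{k+1}\|^{2}$, $\|x^{k}-x^{k+1}\|^{2}$, $\|y^{k}-y^{k+1}\|^{2}$, and the bilinear quantity $-\innp{K(x^{k+1}-\bar{x}^{k}),y^{k+1}-y}+\innp{K(x^{k+1}-x),y^{k+1}-\bar{y}^{k+1}}$. I would then apply \cref{lemma:inequalityCS}\cref{lemma:inequalityCS:K} under condition (a) (resp.\ \cref{lemma:inequalityCS}\cref{lemma:inequalityCS:Ksquare} under condition (b)) with $(x,y)$ unchanged to produce an upper bound of this bilinear quantity, generating the telescopable terms $-\innp{K(x^{k+1}-x^{k}),y^{k+1}-y}+\xi\innp{K(x^{k}-x^{k-1}),y^{k}-y}$ plus error terms proportional to $\eta_{1},\eta_{2},\eta_{3}$. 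The four assumptions in (a) or (b) are precisely what is needed so that, after regrouping, the coefficient of $\|x-x^{k+1}\|^{2}$ is at most $-\frac{1}{2\tau_{k+1}\xi}$, the coefficient of $\|y-y^{k+1}\|^{2}$ is at most $-\frac{1}{2\sigma_{k+1}\xi}$, the coefficient of $\|x^{k}-x^{k-1}\|^{2}$ is at most $\frac{\xi}{2\tau_{k-1}}$, and the coefficient of $\|y^{k}-y^{k+1}\|^{2}$ is at most $0$ (so that term can be dropped).

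For \cref{prop:flinearconvergKsquare:sum}, I would multiply the per-step inequality from (i) by $\frac{1}{\xi^{k}}$ and sum over $k=0,\ldots,N$. The key observation is that every term on the right-hand side telescopes: the pair $\frac{1}{2\tau_{k}\xi^{k}}\|x-x^{k}\|^{2}$ versus $\frac{1}{2\tau_{k+1}\xi^{k+1}}\|x-x^{k+1}\|^{2}$ collapses to the boundary terms at $k=0$ and $k=N+1$; the pair $-\frac{1}{2\tau_{k}\xi^{k}}\|x^{k}-x^{k+1}\|^{2}$ versus $\frac{1}{2\tau_{k-1}\xi^{k-1}}\|x^{k}-x^{k-1}\|^{2}$ collapses to $-\frac{1}{2\tau_{N}\xi^{N}}\|x^{N}-x^{N+1}\|^{2}$ (the $k=0$ contribution vanishes since $x^{-1}=x^{0}$); and the pair $-\frac{1}{\xi^{k}}\innp{K(x^{k+1}-x^{k}),y^{k+1}-y}$ versus $\frac{1}{\xi^{k-1}}\innp{K(x^{k}-x^{k-1}),y^{k}-y}$ collapses to $-\frac{1}{\xi^{N}}\innp{K(x^{N+1}-x^{N}),y^{N+1}-y}$. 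The analogous $y$-telescope for $\sigma_{k}$ works the same way.

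The main obstacle is handling the surviving boundary inner product $-\frac{1}{\xi^{N}}\innp{K(x^{N+1}-x^{N}),y^{N+1}-y}$ together with the remaining $-\frac{1}{2\tau_{N}\xi^{N}}\|x^{N}-x^{N+1}\|^{2}$ and $-\frac{1}{2\sigma_{N+1}\xi^{N+1}}\|y-y^{N+1}\|^{2}$ terms. I would apply \cref{lemma:inequalityCS}\cref{lemma:inequalityCS:Ksquareeta4} with $\eta_{4}$ to upper-bound this inner product by $\frac{1}{2\xi^{N}}\bigl(\eta_{4}\norm{K}^{2}\|y^{N+1}-y\|^{2}+\frac{1}{\eta_{4}}\|x^{N+1}-x^{N}\|^{2}\bigr)$; the assumption $\eta_{4}\geq\tau_{N}$ ensures the resulting $\|x^{N+1}-x^{N}\|^{2}$ coefficient is nonpositive, and the assumption $\frac{1}{\sigma_{N+1}}\geq\xi\eta_{4}\norm{K}^{2}$ ensures the resulting $\|y-y^{N+1}\|^{2}$ coefficient is nonpositive. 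Dropping these nonpositive terms along with $-\frac{1}{2\tau_{N+1}\xi^{N+1}}\|x-x^{N+1}\|^{2}$ leaves exactly the right-hand side $\frac{1}{2\tau_{0}}\|x-x^{0}\|^{2}+\frac{1}{2\sigma_{0}}\|y-y^{0}\|^{2}$, completing the argument.
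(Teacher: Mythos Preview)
Your proposal is correct and follows essentially the same approach as the paper's own proof: start from \cref{lemma:fxkyfxyk}, apply \cref{lemma:inequalityCS}\cref{lemma:inequalityCS:K} (resp.\ \cref{lemma:inequalityCS}\cref{lemma:inequalityCS:Ksquare}) under condition (a) (resp.\ (b)), use the four parameter assumptions to control the resulting coefficients for part~(i), then multiply by $1/\xi^{k}$, telescope, and absorb the surviving boundary inner product via \cref{lemma:inequalityCS}\cref{lemma:inequalityCS:Ksquareeta4} using the $\eta_{4}$ assumptions for part~(ii). The structure, the lemmas invoked, and the handling of boundary terms all match the paper exactly.
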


\begin{proof}
	\cref{prop:flinearconvergKsquare:leq}: 
	Let $k$ be in $\mathbf{N}$.
	 
	 Suppose that the condition (a) holds. 
	 Using  \cref{lemma:fxkyfxyk} in the first inequality, 
	 employing \cref{lemma:inequalityCS}\cref{lemma:inequalityCS:K} 
	 in the second inequality,
	 and applying our assumption (a) in the last inequality, we 
	observe that
	\begin{align*}
	&f(x^{k+1},y) -f(x,y^{k+1})\\
	\leq & \frac{1}{2 \tau_{k}} \norm{x -x^{k}}^{2} +\frac{1}{2\sigma_{k}} \norm{y 
		-y^{k}}^{2} - \left(\frac{\mu}{2} +\frac{1}{2\tau_{k}}\right)\norm{x -x^{k+1}}^{2} 
	-\left( \frac{\nu}{2} +\frac{1}{2 \sigma_{k}}\right) \norm{y -y^{k+1}}^{2}\\
	&- \frac{1}{2\tau_{k}} \norm{x^{k} -x^{k+1}}^{2} 
	- \frac{1}{2 \sigma_{k}} \norm{y^{k} -y^{k+1}}^{2}\\ 
	&-\innp{K(x^{k+1}-\bar{x}^{k}),y^{k+1}-y}
	+\innp{K(x^{k+1}-x),y^{k+1}-\bar{y}^{k+1}}\\
	\leq &	\frac{1}{2 \tau_{k}} \norm{x -x^{k}}^{2} +\frac{1}{2\sigma_{k}} \norm{y 
		-y^{k}}^{2} - \left(\frac{\mu}{2} +\frac{1}{2\tau_{k}}\right)\norm{x -x^{k+1}}^{2} 
	-\left( \frac{\nu}{2} +\frac{1}{2 \sigma_{k}}\right) \norm{y -y^{k+1}}^{2}\\
	&- \frac{1}{2\tau_{k}} \norm{x^{k} -x^{k+1}}^{2} 
	- \frac{1}{2 \sigma_{k}} \norm{y^{k} -y^{k+1}}^{2}\\ 
	&-\innp{K(x^{k+1}-x^{k}),y^{k+1}-y}
	+\xi \innp{K(x^{k}-x^{k-1}),y^{k}-y}\\
	&+\frac{ \norm{K}}{2}\left(\eta_{1}\xi^{2}\norm{x^{k}-x^{k-1}}^{2}
	+\frac{\norm{y^{k+1}-y^{k} }}{\eta_{1}}^{2}\right) \\
	&+\frac{\norm{K}}{2} \left( \eta_{2} \left(\xi -\alpha_{k-1} 
	\right)^{2}\norm{y^{k+1}-y}^{2} 
	+\frac{\norm{x^{k}-x^{k-1}}^{2}}{\eta_{2}}
	\right)\\
	&+\frac{\norm{K}}{2} \left( \eta_{3}\beta_{k}^{2}\norm{x^{k+1}-x}^{2} 
	+\frac{\norm{y^{k+1}-y^{k}}^{2} }{\eta_{3}}  \right)\\
	=&\frac{1}{2 \tau_{k}} \norm{x -x^{k}}^{2} +\frac{1}{2\sigma_{k}} \norm{y 
		-y^{k}}^{2} 
	-\innp{K(x^{k+1}-x^{k}),y^{k+1}-y}
	+\xi \innp{K(x^{k}-x^{k-1}),y^{k}-y} \\
	&- \frac{1}{2}\left(\mu +\frac{1}{\tau_{k}} - \eta_{3}\norm{K} \beta_{k}^{2} 
	\right)\norm{x -x^{k+1}}^{2} 
	-\frac{1}{2} \left(  \nu  +\frac{1}{\sigma_{k}} - \eta_{2} \norm{K}\left(\xi 
	-\alpha_{k-1} 
	\right)^{2}\right) \norm{y -y^{k+1}}^{2}\\
	&- \frac{1}{2\tau_{k}} \norm{x^{k} -x^{k+1}}^{2}
	+\frac{\norm{K}}{2}\left( \eta_{1} \xi^{2} + \frac{1}{\eta_{2}} 
	\right)\norm{x^{k}-x^{k-1}}^{2}\\
	&- \frac{1}{2} \left( \frac{1}{\sigma_{k}} -\frac{\norm{K}}{\eta_{1}} 
	-\frac{\norm{K}}{\eta_{3}} \right) 
	\norm{y^{k} -y^{k+1}}^{2}\\
	\leq& \frac{1}{2 \tau_{k}} \norm{x -x^{k}}^{2} +\frac{1}{2\sigma_{k}} \norm{y 
		-y^{k}}^{2} 
	-\innp{K(x^{k+1}-x^{k}),y^{k+1}-y}
	+\xi \innp{K(x^{k}-x^{k-1}),y^{k}-y} \\
	&- \frac{1}{2 \tau_{k+1}\xi} \norm{x -x^{k+1}}^{2} - \frac{1}{2\sigma_{k+1}\xi} \norm{y 
		-y^{k+1}}^{2} 
	- \frac{1}{2\tau_{k}} \norm{x^{k} -x^{k+1}}^{2} 
	+ \frac{\xi}{2\tau_{k-1}} \norm{x^{k} -x^{k-1}}^{2}, 
\end{align*}
which leads to the required inequality.

Similarly, under the condition (b), applying  \cref{lemma:fxkyfxyk} in the first inequality, 
	employing \cref{lemma:inequalityCS}\cref{lemma:inequalityCS:Ksquare} 
	in the second inequality,
	and adopting our assumptions in (b) in the last inequality, we 
	deduce that
	\begin{align*}
		&f(x^{k+1},y) -f(x,y^{k+1})\\
		\leq & \frac{1}{2 \tau_{k}} \norm{x -x^{k}}^{2} +\frac{1}{2\sigma_{k}} \norm{y 
			-y^{k}}^{2} - \left(\frac{\mu}{2} +\frac{1}{2\tau_{k}}\right)\norm{x -x^{k+1}}^{2} 
		-\left( \frac{\nu}{2} +\frac{1}{2 \sigma_{k}}\right) \norm{y -y^{k+1}}^{2}\\
		&- \frac{1}{2\tau_{k}} \norm{x^{k} -x^{k+1}}^{2} 
		- \frac{1}{2 \sigma_{k}} \norm{y^{k} -y^{k+1}}^{2}\\ 
		&-\innp{K(x^{k+1}-\bar{x}^{k}),y^{k+1}-y}
		+\innp{K(x^{k+1}-x),y^{k+1}-\bar{y}^{k+1}}\\
	\leq &	\frac{1}{2 \tau_{k}} \norm{x -x^{k}}^{2} +\frac{1}{2\sigma_{k}} \norm{y 
		-y^{k}}^{2} - \left(\frac{\mu}{2} +\frac{1}{2\tau_{k}}\right)\norm{x -x^{k+1}}^{2} 
	-\left( \frac{\nu}{2} +\frac{1}{2 \sigma_{k}}\right) \norm{y -y^{k+1}}^{2}\\
	&- \frac{1}{2\tau_{k}} \norm{x^{k} -x^{k+1}}^{2} 
	- \frac{1}{2 \sigma_{k}} \norm{y^{k} -y^{k+1}}^{2}\\ 
	&-\innp{K(x^{k+1}-x^{k}),y^{k+1}-y}
	+\xi \innp{K(x^{k}-x^{k-1}),y^{k}-y}\\
	&+\frac{1}{2}\left(\eta_{1} \norm{K}^{2}\xi^{2}\norm{x^{k}-x^{k-1}}^{2}
	+\frac{\norm{y^{k+1}-y^{k} }}{\eta_{1}}^{2}\right) \\
	&+\frac{1}{2} \left( \eta_{2} \norm{K}^{2}\left(\xi -\alpha_{k-1} 
	\right)^{2}\norm{y^{k+1}-y}^{2} 
	+\frac{\norm{x^{k}-x^{k-1}}^{2}}{\eta_{2}}
	\right)\\
	&+\frac{1}{2} \left( \eta_{3}\norm{K}^{2}\beta_{k}^{2}\norm{x^{k+1}-x}^{2} 
	+\frac{\norm{y^{k+1}-y^{k}}^{2} }{\eta_{3}}  \right)\\
=&\frac{1}{2 \tau_{k}} \norm{x -x^{k}}^{2} +\frac{1}{2\sigma_{k}} \norm{y 
	-y^{k}}^{2} 
-\innp{K(x^{k+1}-x^{k}),y^{k+1}-y}
+\xi \innp{K(x^{k}-x^{k-1}),y^{k}-y} \\
&- \frac{1}{2}\left(\mu +\frac{1}{\tau_{k}} - \eta_{3}\norm{K}^{2}\beta_{k}^{2} 
\right)\norm{x -x^{k+1}}^{2} 
-\frac{1}{2} \left(  \nu  +\frac{1}{\sigma_{k}} - \eta_{2} \norm{K}^{2}\left(\xi -\alpha_{k-1} 
\right)^{2}\right) \norm{y -y^{k+1}}^{2}\\
&- \frac{1}{2\tau_{k}} \norm{x^{k} -x^{k+1}}^{2}
+\frac{1}{2}\left( \eta_{1} \norm{K}^{2}\xi^{2} + \frac{1}{\eta_{2}} 
\right)\norm{x^{k}-x^{k-1}}^{2}\\
&- \frac{1}{2} \left( \frac{1}{\sigma_{k}} -\frac{1}{\eta_{1}} -\frac{1}{\eta_{3}} \right) 
\norm{y^{k} -y^{k+1}}^{2}\\
\leq& \frac{1}{2 \tau_{k}} \norm{x -x^{k}}^{2} +\frac{1}{2\sigma_{k}} \norm{y 
	-y^{k}}^{2} 
-\innp{K(x^{k+1}-x^{k}),y^{k+1}-y}
+\xi \innp{K(x^{k}-x^{k-1}),y^{k}-y} \\
&- \frac{1}{2 \tau_{k+1}\xi} \norm{x -x^{k+1}}^{2} - \frac{1}{2\sigma_{k+1}\xi} \norm{y 
	-y^{k+1}}^{2} 
- \frac{1}{2\tau_{k}} \norm{x^{k} -x^{k+1}}^{2} 
+ \frac{\xi}{2\tau_{k-1}} \norm{x^{k} -x^{k-1}}^{2}, 
	\end{align*}
	which immediately derives the desired inequality. 
	
	\cref{prop:flinearconvergKsquare:sum}: 
	For every $k$ in $\mathbf{N}$, multiply both sides of the inequality in 
	\cref{prop:flinearconvergKsquare:leq} above
	by $\frac{1}{\xi^{k}}$ to see that
	\begin{align*}
		&\frac{1}{\xi^{k}} \left( f(x^{k+1},y) -f(x,y^{k+1}) \right)\\
		\leq &
		\frac{1}{2 \tau_{k}\xi^{k}} \norm{x -x^{k}}^{2} 
		+\frac{1}{2\sigma_{k}\xi^{k}} \norm{y -y^{k}}^{2} 
		-\frac{1}{\xi^{k}} \innp{K(x^{k+1}-x^{k}),y^{k+1}-y}
		+\frac{1}{\xi^{k-1}} \innp{K(x^{k}-x^{k-1}),y^{k}-y} \\
		&- \frac{1}{2 \tau_{k+1}\xi^{k+1}} \norm{x -x^{k+1}}^{2} 
		- \frac{1}{2\sigma_{k+1}\xi^{k+1}} \norm{y -y^{k+1}}^{2} 
		- \frac{1}{2\tau_{k}\xi^{k}} \norm{x^{k} -x^{k+1}}^{2} 
		+ \frac{1}{2\tau_{k-1}\xi^{k-1}} \norm{x^{k} -x^{k-1}}^{2}.
	\end{align*}
	Then sum the inequality above from $k=0$ to $k=N$ to get that
		\begin{align*}
		&\sum^{k=N}_{k=0} \frac{1}{\xi^{k}} \left( f(x^{k+1},y) -f(x,y^{k+1}) \right)\\
		\leq &
		\frac{1}{2 \tau_{0}} \norm{x -x^{0}}^{2} 
		+\frac{1}{2\sigma_{0}} \norm{y -y^{0}}^{2} 
		-\frac{1}{\xi^{N}} \innp{K(x^{N+1}-x^{N}),y^{N+1}-y}
		+\frac{1}{\xi^{-1}} \innp{K(x^{0}-x^{-1}),y^{0}-y} \\
		&- \frac{1}{2 \tau_{N+1}\xi^{N+1}} \norm{x -x^{N+1}}^{2} 
		- \frac{1}{2\sigma_{N+1}\xi^{N+1}} \norm{y -y^{N+1}}^{2} 
		- \frac{1}{2\tau_{N}\xi^{N}} \norm{x^{N} -x^{N+1}}^{2} 
		+ \frac{1}{2\tau_{-1}\xi^{-1}} \norm{x^{0} -x^{-1}}^{2}\\
		=&\frac{1}{2 \tau_{0}} \norm{x -x^{0}}^{2} 
		+\frac{1}{2\sigma_{0}} \norm{y -y^{0}}^{2} 
		-\frac{1}{\xi^{N}} \innp{K(x^{N+1}-x^{N}),y^{N+1}-y}\\
		&- \frac{1}{2 \tau_{N+1}\xi^{N+1}} \norm{x -x^{N+1}}^{2} 
		- \frac{1}{2\sigma_{N+1}\xi^{N+1}} \norm{y -y^{N+1}}^{2} 
		- \frac{1}{2\tau_{N}\xi^{N}} \norm{x^{N} -x^{N+1}}^{2}\\
		\leq & \frac{1}{2 \tau_{0}} \norm{x -x^{0}}^{2} 
		+\frac{1}{2\sigma_{0}} \norm{y -y^{0}}^{2} 
		+\frac{1}{2\xi^{N}} \left(\eta_{4}\norm{K}^{2} \norm{y^{N+1}-y}^{2}
		+\frac{1}{\eta_{4}} \norm{x^{N+1}-x^{N}}^{2} \right)\\
		&- \frac{1}{2 \tau_{N+1}\xi^{N+1}} \norm{x -x^{N+1}}^{2} 
		- \frac{1}{2\sigma_{N+1}\xi^{N+1}} \norm{y -y^{N+1}}^{2} 
		- \frac{1}{2\tau_{N}\xi^{N}} \norm{x^{N} -x^{N+1}}^{2}\\
		=& \frac{1}{2 \tau_{0}} \norm{x -x^{0}}^{2} 
		+\frac{1}{2\sigma_{0}} \norm{y -y^{0}}^{2} 
		+\frac{1}{2\xi^{N}} \left( \frac{1}{\eta_{4}} -\frac{1}{\tau_{N}} \right)  
		\norm{x^{N} -x^{N+1}}^{2}\\
		&- \frac{1}{2 \tau_{N+1}\xi^{N+1}} \norm{x -x^{N+1}}^{2} 
		 -\frac{1}{2\xi^{N+1}} \left( \frac{1}{\sigma_{N+1}}- \xi \eta_{4}\norm{K}^{2} \right) 
		 \norm{y^{N+1}-y}^{2}\\
	\leq & \frac{1}{2 \tau_{0}} \norm{x -x^{0}}^{2} 
	+\frac{1}{2\sigma_{0}} \norm{y -y^{0}}^{2}. 
	\end{align*}
where in the first equality above we use the fact $x^{0} =x^{-1}$, in the second
 inequality above we adopt
 \cref{lemma:inequalityCS}\cref{lemma:inequalityCS:Ksquareeta4},
 and in the last inequality above we use the assumptions 
 $(\forall k \in \mathbf{N})$ 
 $\eta_{4} \geq \tau_{k}$ and $ \frac{1}{\sigma_{k+1}}- \xi \eta_{4}\norm{K}^{2} \geq 0$.
\end{proof}

\begin{proposition}\label{prop:linearconvergeineq}
	Let $(x^{*},y^{*})$ be a saddle-point of $f$.
	Let $\eta_{4}$  be in $\mathbf{R}_{++}$  such that $(\forall k \in \mathbf{N})$
	$\eta_{4} \geq \tau_{k}$ and $ \frac{1}{\sigma_{k+1}}- \xi \eta_{4}\norm{K}^{2} \geq 
	0$.
	Let $\xi$ be in $(0,1)$, 
	and let $\eta_{1}$, $\eta_{2}$, and $\eta_{3}$ be in $\mathbf{R}_{++}$ 
	such that one of the following conditions hold. 
	\begin{enumerate}
		\item[(a)] \label{prop:linearconvergeineq:cK} 
		$(\forall k \in \mathbf{N})$ 
		$\mu +\frac{1}{\tau_{k}} - \eta_{3}\norm{K} \beta_{k}^{2}  \geq \frac{1}{\tau_{k+1} 
			\xi}$,
		$ \nu  +\frac{1}{\sigma_{k}} - \eta_{2} \norm{K}\left(\xi 
		-\alpha_{k-1} \right)^{2} \geq \frac{1}{\sigma_{k+1}\xi}$,
		$\norm{K} \left( \eta_{1} \xi^{2} + \frac{1}{\eta_{2}} 
		\right)  \leq \frac{\xi}{\tau_{k-1}}$, and
		$\frac{1}{\sigma_{k}} -\frac{\norm{K}}{\eta_{1}} -\frac{\norm{K}}{\eta_{3}}  \geq 0$.
		\item[(b)] \label{prop:linearconvergeineq:cKsquare} $(\forall k \in \mathbf{N})$ 
		$\mu +\frac{1}{\tau_{k}} - \eta_{3}\norm{K}^{2}\beta_{k}^{2}  \geq 
		\frac{1}{\tau_{k+1} 
			\xi}$,
		$ \nu  +\frac{1}{\sigma_{k}} - \eta_{2} \norm{K}^{2}\left(\xi -\alpha_{k-1} 
		\right)^{2} \geq \frac{1}{\sigma_{k+1}\xi}$,
		$\eta_{1} \norm{K}^{2}\xi^{2} + \frac{1}{\eta_{2}}  \leq \frac{\xi}{\tau_{k-1}}$, and
		$\frac{1}{\sigma_{k}} -\frac{1}{\eta_{1}} -\frac{1}{\eta_{3}}  \geq 0$.
	\end{enumerate}

Then we have that for every $ k \in \mathbf{N}$,
	\begin{subequations}
	\begin{align*}
		&f(\hat{x}_{k},\hat{y}_{k}) -f(x^{*},y^{*}) 
		\leq \xi^{k} \left(  \frac{1}{2 \tau_{0}} \norm{x^{*} -x^{0}}^{2} 
		+\frac{1}{2\sigma_{0}} \norm{\hat{y}_{k} -y^{0}}^{2} \right);\\
		&f(x^{*},y^{*}) -f(\hat{x}_{k},\hat{y}_{k})
		\leq \xi^{k}\left(  \frac{1}{2 \tau_{0}} \norm{\hat{x}_{k} -x^{0}}^{2} 
		+\frac{1}{2\sigma_{0}} \norm{y^{*} -y^{0}}^{2} \right).
	\end{align*}
\end{subequations} 
\end{proposition}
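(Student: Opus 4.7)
The plan is to combine two of the main ingredients already established in the excerpt: the weighted gap bound of \cref{fact:fxkykx*y*} and the summed inequality in \cref{prop:flinearconvergKsquare}\cref{prop:flinearconvergKsquare:sum}, tying them together through the identification $t_j = 1/\xi^j$ inside the weighted average defining $(\hat{x}_k, \hat{y}_k)$ in \cref{eq:x_ky_k}.

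First, I would apply \cref{fact:fxkykx*y*} with $t_j = 1/\xi^j$ for $j \in \{0,\dots,k\}$ (noting $t_0 = 1 \in \mathbf{R}_{++}$, so the hypothesis is satisfied). This yields, for every $k \in \mathbf{N}$,
\begin{align*}
f(\hat{x}_k,\hat{y}_k) - f(x^*,y^*)
&\leq \frac{1}{\sum_{j=0}^k \tfrac{1}{\xi^j}} \sum_{j=0}^k \tfrac{1}{\xi^j}\left(f(x^{j+1},\hat{y}_k) - f(x^*,y^{j+1})\right),\\
f(x^*,y^*) - f(\hat{x}_k,\hat{y}_k)
&\leq \frac{1}{\sum_{j=0}^k \tfrac{1}{\xi^j}} \sum_{j=0}^k \tfrac{1}{\xi^j}\left(f(x^{j+1},y^*) - f(\hat{x}_k,y^{j+1})\right).
\end{align*}
Next, I would apply \cref{prop:flinearconvergKsquare}\cref{prop:flinearconvergKsquare:sum} with $N = k$ and $(x,y) = (x^*, \hat{y}_k)$ to bound the first sum by $\frac{1}{2\tau_0}\norm{x^*-x^0}^2 + \frac{1}{2\sigma_0}\norm{\hat{y}_k-y^0}^2$, and again with $(x,y) = (\hat{x}_k, y^*)$ to bound the second sum by $\frac{1}{2\tau_0}\norm{\hat{x}_k-x^0}^2 + \frac{1}{2\sigma_0}\norm{y^*-y^0}^2$. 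Note that the hypotheses (a)/(b) of \cref{prop:flinearconvergKsquare} coincide with those assumed here, and the hypothesis on $\eta_4$ is exactly the one needed in \cref{prop:flinearconvergKsquare}\cref{prop:flinearconvergKsquare:sum}, so the application is immediate.

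The last step is to replace the prefactor $\frac{1}{\sum_{j=0}^k 1/\xi^j}$ by $\xi^k$. Since $\xi \in (0,1)$, we have $1/\xi > 1$, so $1/\xi^j$ is increasing in $j$, and therefore
\[
\sum_{j=0}^k \tfrac{1}{\xi^j} \geq \tfrac{1}{\xi^k} \quad \Longleftrightarrow \quad \frac{1}{\sum_{j=0}^k \tfrac{1}{\xi^j}} \leq \xi^k.
\]
Substituting this into the two inequalities above produces the two claimed linear bounds.

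I do not expect a genuine obstacle: the proof is essentially a bookkeeping composition of \cref{fact:fxkykx*y*} with \cref{prop:flinearconvergKsquare}\cref{prop:flinearconvergKsquare:sum}, the only non-mechanical ingredient being the elementary observation that the maximal term $1/\xi^k$ of a geometric sum with ratio $1/\xi > 1$ gives a lower bound on that sum. One small care point is ensuring that the chosen weights and evaluation points $(x^*,\hat{y}_k)$ and $(\hat{x}_k, y^*)$ are admissible inputs to both lemmas, which they are since \cref{prop:flinearconvergKsquare} holds for arbitrary $(x,y) \in \mathcal{H}_1 \times \mathcal{H}_2$ and \cref{fact:fxkykx*y*} only requires $t_0 > 0$.
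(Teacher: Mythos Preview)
Your proposal is correct and follows essentially the same approach as the paper: apply \cref{fact:fxkykx*y*} with $t_j=1/\xi^j$, bound each sum via \cref{prop:flinearconvergKsquare}\cref{prop:flinearconvergKsquare:sum} with $(x,y)=(x^*,\hat{y}_k)$ and $(x,y)=(\hat{x}_k,y^*)$ respectively, and then use $\sum_{j=0}^k 1/\xi^j \geq 1/\xi^k$ to replace the prefactor. The paper's proof differs only in the order of exposition and in writing the geometric-sum bound as $\sum_{j=0}^k 1/\xi^j = \tfrac{1}{\xi^k}\cdot\tfrac{1-\xi^k}{1-\xi}\geq \tfrac{1}{\xi^k}$.
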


\begin{proof}
	According to 
	\cref{prop:flinearconvergKsquare}\cref{prop:flinearconvergKsquare:sum}  under the 
	set of conditions (a)
	or (b) above, we have that 
	for every $(x,y) \in \mathcal{H}_{1}\times \mathcal{H}_{2}$ and every $k \in 
	\mathbf{N}$,
	\begin{align}\label{eq:prop:linearconvergeineq}
	 \frac{1}{\sum^{k}_{j=0}\frac{1}{\xi^{j}}} 	\sum^{k}_{i=0} \frac{1}{\xi^{i}} \left( 
	 f(x^{i+1},y) -f(x,y^{i+1}) \right) 
		\leq  \frac{1}{\sum^{k}_{j=0}\frac{1}{\xi^{j}}}  \left(  \frac{1}{2 \tau_{0}} \norm{x 
		-x^{0}}^{2} 
		+\frac{1}{2\sigma_{0}} \norm{y -y^{0}}^{2} \right). 
	\end{align}	

	Note that because $\xi \in (0,1)$, we have that for every $k \in \mathbf{N}$,
	\begin{align}\label{eq:prop:linearconvergeineq:xi}
	\sum^{k}_{j=0}	\frac{1}{\xi^{j}}=\frac{1}{\xi^{k}} \sum^{k}_{j=0}\xi^{j}
	= \frac{1}{\xi^{k}} \frac{1 -\xi^{k}}{1-\xi} \geq \frac{1}{\xi^{k}}.
	\end{align}
In the following two expressions, applying \cref{fact:fxkykx*y*} with $(\forall i \in 
\mathbf{N})$ $t_{i} =\frac{1}{\xi^{i}}$
in the first inequality below, 
employing \cref{eq:prop:linearconvergeineq}
in the second inequality with $(x,y)=\left(x^{*}, \hat{y}_{k}\right)$ 
and $(x,y)=\left(\hat{x}_{k}, y^{*}\right)$, respectively, we have that
\begin{align*}
	&f(\hat{x}_{k},\hat{y}_{k}) -f(x^{*},y^{*})\\ 
	\leq & \frac{1}{\sum^{k}_{i=0}\frac{1}{\xi^{i}}} \sum^{k}_{j=0}\frac{1}{\xi^{j}} \left( 
	f(x^{j+1}, \hat{y}_{k})  
	-f(x^{*},y^{j+1}) \right)\\
	\leq &  \frac{1}{\sum^{k}_{i=0}\frac{1}{\xi^{i}}}  \left(\frac{1}{2 \tau_{0}} \norm{x^{*} 
	-x^{0}}^{2} 
	+\frac{1}{2\sigma_{0}} \norm{ \hat{y}_{k} -y^{0}}^{2}\right)\\
	\stackrel{\cref{eq:prop:linearconvergeineq:xi}}{\leq} & \xi^{k} \left(  \frac{1}{2 \tau_{0}} 
	\norm{x^{*} -x^{0}}^{2} 
	+\frac{1}{2\sigma_{0}} \norm{\hat{y}_{k} -y^{0}}^{2} \right);
\end{align*}
and 
\begin{align*}
&f(x^{*},y^{*}) -f(\hat{x}_{k},\hat{y}_{k})\\
\leq &\frac{1}{\sum^{k}_{i=0}\frac{1}{\xi^{i}}} \sum^{k}_{j=0}\frac{1}{\xi^{j}} \left( 
f(x^{j+1},y^{*})  
-f(\hat{x}_{k},y^{j+1}) \right)	\\
\leq &  \frac{1}{\sum^{k}_{i=0}\frac{1}{\xi^{i}}}  \left(\frac{1}{2 \tau_{0}} \norm{\hat{x}_{k}
-x^{0}}^{2} 
+\frac{1}{2\sigma_{0}} \norm{y^{*} -y^{0}}^{2}\right)\\
\stackrel{\cref{eq:prop:linearconvergeineq:xi}}{\leq} & 
\xi^{k} \left(\frac{1}{  2 \tau_{0}} \norm{\hat{x}_{k}
	-x^{0}}^{2} 
+\frac{1}{ 2\sigma_{0}} \norm{y^{*} -y^{0}}^{2}\right).
\end{align*}
Therefore, our proof is complete.
\end{proof}

\begin{theorem} \label{theorem:linearconvergeineq}
		Let $(x^{*},y^{*})$ be a saddle-point of $f$.
	Suppose $\inf_{k \in 	\mathbf{N}}\tau_{k} >0$ and $\inf_{k \in 	
		\mathbf{N}}\sigma_{k} >0$. 
	Let $\xi$ be in $(0,1)$ such that 
	$\xi \sup_{k \in \mathbf{N}} \tau_{k} \sup_{k \in \mathbf{N}} \sigma_{k} \norm{K}^{2}< 
	1$.
	Let  $\eta_{4}$ be in 
	$\mathbf{R}_{++}$ such that  $\eta_{4} \geq \sup_{i \in \mathbf{N}} \tau_{i}$,
	and $\frac{1}{\sup_{i \in \mathbf{N}}\sigma_{i}} -\xi \eta_{4} \norm{K}^{2} >0$.

	Let $\eta_{1}$, $\eta_{2}$ and $\eta_{3}$ be in 
	$\mathbf{R}_{++}$ such that  one of the following assumptions hold. 
	\begin{enumerate}
		\item[(a)]  \label{theorem:linearconvergeineq:K} 
		$
		\mu\geq \frac{1}{\xi \inf_{k \in \mathbf{N}} 
			\tau_{k}} -\frac{1}{\sup_{k \in \mathbf{N}}\tau_{k}}+  \eta_{3} \norm{K} \sup_{k 
			\in \mathbf{N}}\beta_{k}^{2}
		$,
		$
		\nu \geq \frac{1}{\xi \inf_{k \in 
				\mathbf{N}}\sigma_{k}} -\frac{1}{\sup_{k \in \mathbf{N}}\sigma_{k}}  
		+ \eta_{2}\norm{K} \sup_{k \in \mathbf{N}}(\xi -\alpha_{k})^{2}
		$,
		$\norm{K} \left( \eta_{1} \xi^{2} 
		+\frac{1}{\eta_{2}}\right) \leq \frac{\xi}{\sup_{i 
		\in 
				\mathbf{N}}\tau_{i}}$, and
		$\norm{K} \left(\frac{1}{\eta_{1}} +\frac{1}{\eta_{3}}\right) \leq \frac{1}{\sup_{i \in 
				\mathbf{N}}\sigma_{i}}$.
		\item[(b)]  \label{theorem:linearconvergeineq:Ksquare} 
		$
		\mu\geq \frac{1}{\xi \inf_{k \in \mathbf{N}} 
			\tau_{k}} -\frac{1}{\sup_{k \in \mathbf{N}}\tau_{k}}+  \eta_{3} \norm{K}^{2}\sup_{k 
			\in \mathbf{N}}\beta_{k}^{2}
		$,
		$
		\nu \geq \frac{1}{\xi \inf_{k \in 
				\mathbf{N}}\sigma_{k}} -\frac{1}{\sup_{k \in \mathbf{N}}\sigma_{k}}  
		+ \eta_{2}\norm{K}^{2} \sup_{k \in \mathbf{N}}(\xi -\alpha_{k})^{2}
		$,
		$\eta_{1} \norm{K}^{2}\xi 
		+\frac{1}{\eta_{2}\xi} \leq \frac{1}{\sup_{k \in 
				\mathbf{N}}\tau_{k}}$,
		and
		$\frac{1}{\eta_{1}} +\frac{1}{\eta_{3}} \leq \frac{1}{\sup_{k \in \mathbf{N}} 
		\sigma_{k}}$.
	\end{enumerate}

	Then there exists $M \in \mathbf{R}_{++}$ such that  for every $ k \in \mathbf{N}$,
	\begin{subequations}
		\begin{align*}
			&f(\hat{x}_{k},\hat{y}_{k}) -f(x^{*},y^{*}) 
			\leq \xi^{k} \left(  \frac{1}{2 \tau_{0}} \norm{x^{*} -x^{0}}^{2} 
			+\frac{1}{2\sigma_{0}} \norm{\hat{y}_{k} -y^{0}}^{2} \right) \leq \xi^{k}M;\\
			&f(x^{*},y^{*}) -f(\hat{x}_{k},\hat{y}_{k})
			\leq \xi^{k}\left(  \frac{1}{2 \tau_{0}} \norm{\hat{x}_{k} -x^{0}}^{2} 
			+\frac{1}{2\sigma_{0}} \norm{y^{*} -y^{0}}^{2} \right)\leq \xi^{k}M.
		\end{align*}
	\end{subequations} 
	Consequently, 
	the sequence $\left(f\left( \hat{x}_{k},\hat{y}_{k} \right)\right)_{k \in \mathbf{N}}$
	converges linearly to the point $f(x^{*},y^{*})$ with a rate $\xi \in (0,1)$.
\end{theorem}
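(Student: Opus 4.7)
The plan is to derive the theorem from \cref{prop:linearconvergeineq} in two stages: first verify that the hypotheses (a)/(b) of the theorem imply the pointwise hypotheses (a)/(b) of the proposition, and then produce the uniform constant $M$ by showing that $\left(\norm{\hat{x}_k - x^0}\right)_{k \in \mathbf{N}}$ and $\left(\norm{\hat{y}_k - y^0}\right)_{k \in \mathbf{N}}$ are bounded.

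The first stage is routine. Since $\tau_{k-1} \leq \sup_i \tau_i$, $\sigma_k \leq \sup_i \sigma_i$, $\inf_i \tau_i \leq \tau_k$, $\inf_i \sigma_i \leq \sigma_k$, and $\sup_i \beta_i^2 \geq \beta_k^2$, $\sup_i (\xi - \alpha_i)^2 \geq (\xi - \alpha_{k-1})^2$, each inequality in hypothesis (a) (respectively (b)) of the theorem entails the corresponding pointwise inequality in hypothesis (a) (respectively (b)) of \cref{prop:linearconvergeineq}, exactly in the style of the opening passages of the proofs of \cref{theorem:linearconverg:K,theorem:linearconverg:Ksquare}. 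Applying \cref{prop:linearconvergeineq} then delivers the two chain-of-inequalities
\[
f(\hat{x}_{k},\hat{y}_{k}) - f(x^{*},y^{*}) \leq \xi^{k}\left(\tfrac{1}{2\tau_{0}}\norm{x^{*}-x^{0}}^{2}+\tfrac{1}{2\sigma_{0}}\norm{\hat{y}_{k}-y^{0}}^{2}\right),
\]
and the symmetric estimate, so only the final bound by $\xi^k M$ remains.

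The second stage is the heart of the argument. To produce $M$, I would first show that the iterates $(x^k)_{k \in \mathbf{N}}$ and $(y^k)_{k \in \mathbf{N}}$ themselves remain bounded; from this, boundedness of the convex combinations $\hat{x}_k$ and $\hat{y}_k$, and hence of $\norm{\hat{x}_k - x^0}$ and $\norm{\hat{y}_k - y^0}$, follows at once. For the boundedness of iterates, I would invoke \cref{prop:flinearconvergKsquare}\cref{prop:flinearconvergKsquare:sum} with $(x,y) = (x^{*},y^{*})$; its hypotheses are the same ones established in stage one plus the $\eta_4$ condition already present in the theorem. This yields
\[
\sum_{k=0}^{N} \frac{1}{\xi^{k}}\bigl(f(x^{k+1},y^{*})-f(x^{*},y^{k+1})\bigr) \leq \frac{1}{2\tau_{0}}\norm{x^{*}-x^{0}}^{2}+\frac{1}{2\sigma_{0}}\norm{y^{*}-y^{0}}^{2}.
\]
By \cref{lemma:partialstronglyconvex}\cref{lemma:partialstronglyconvex:f} each summand on the left is bounded below by $\tfrac{\mu}{2}\norm{x^{k+1}-x^{*}}^{2}+\tfrac{\nu}{2}\norm{y^{k+1}-y^{*}}^{2} \geq 0$, so each individual term is controlled by the partial sum, giving
\[
\tfrac{\mu}{2}\norm{x^{k+1}-x^{*}}^{2}+\tfrac{\nu}{2}\norm{y^{k+1}-y^{*}}^{2} \leq \xi^{k}\left(\tfrac{1}{2\tau_{0}}\norm{x^{*}-x^{0}}^{2}+\tfrac{1}{2\sigma_{0}}\norm{y^{*}-y^{0}}^{2}\right).
\]
Because $\xi \in (0,1)$ and $\mu, \nu > 0$, this forces $\sup_k \norm{x^{k+1}-x^{*}} < \infty$ and $\sup_k \norm{y^{k+1}-y^{*}} < \infty$, whence $\hat{x}_k$ and $\hat{y}_k$ are uniformly bounded, and the triangle inequality produces a finite $M$ majorizing both $\tfrac{1}{2\tau_{0}}\norm{x^{*}-x^{0}}^{2}+\tfrac{1}{2\sigma_{0}}\norm{\hat{y}_{k}-y^{0}}^{2}$ and $\tfrac{1}{2\tau_{0}}\norm{\hat{x}_{k}-x^{0}}^{2}+\tfrac{1}{2\sigma_{0}}\norm{y^{*}-y^{0}}^{2}$ uniformly in $k$.

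The expected obstacle is largely bookkeeping: checking that the $\eta_4$ condition and the $\inf/\sup$ assumptions on $(\tau_k),(\sigma_k)$ in the theorem supply exactly what \cref{prop:flinearconvergKsquare}\cref{prop:flinearconvergKsquare:sum} needs, so that the summed estimate can indeed be applied with $(x,y) = (x^{*}, y^{*})$. Once this is in place, the passage from $\sum \frac{1}{\xi^k} a_k \leq C$ (with $a_k \geq 0$) to $a_k \leq \xi^k C$ is immediate, and the theorem's assertion that $\left(f(\hat{x}_k, \hat{y}_k)\right)_{k \in \mathbf{N}}$ converges linearly to $f(x^{*}, y^{*})$ with rate $\xi$ is then a direct consequence of the $\xi^k M$ bound.
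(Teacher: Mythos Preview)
Your proposal is correct and follows the paper's overall two-stage structure: reduce the theorem's $\inf/\sup$ hypotheses to the pointwise hypotheses of \cref{prop:linearconvergeineq}, and separately establish boundedness of $(\hat{x}_k,\hat{y}_k)$ to produce $M$. The one genuine difference is in how you obtain boundedness of the iterates. The paper observes that the hypotheses $\mu\geq\cdots$ and $\nu\geq\cdots$ trivially imply $2\mu\geq\cdots$ and $2\nu\geq\cdots$, and then invokes \cref{theorem:linearconverg:K} or \cref{theorem:linearconverg:Ksquare} to get linear convergence (hence boundedness) of $(x^k,y^k)$. You instead apply \cref{prop:flinearconvergKsquare}\cref{prop:flinearconvergKsquare:sum} with $(x,y)=(x^*,y^*)$ and use \cref{lemma:partialstronglyconvex}\cref{lemma:partialstronglyconvex:f} to see that each summand is nonnegative and bounded below by $\tfrac{\mu}{2}\norm{x^{k+1}-x^*}^2+\tfrac{\nu}{2}\norm{y^{k+1}-y^*}^2$, from which $a_k\leq\xi^k C$ follows at once. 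Your route is arguably cleaner: it stays entirely within the Section~4 machinery, avoids the ``factor of $2$'' detour through the Section~3 theorems, and in fact recovers linear convergence of the iterates as a byproduct. The paper's route, on the other hand, emphasizes the logical dependence of the function-value result on the iterate-convergence result already proved.
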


\begin{proof}
Because $\mu >0$ and $\nu >0$, it is clear that 
$(\forall i \in \{1,2\})$
$
\mu\geq \frac{1}{\xi \inf_{k \in \mathbf{N}} 
	\tau_{k}} -\frac{1}{\sup_{k \in \mathbf{N}}\tau_{k}}+  \eta_{3} \norm{K}^{i}\sup_{k 
	\in \mathbf{N}}\beta_{k}^{2}
$
and 
$
\nu \geq \frac{1}{\xi \inf_{k \in 
		\mathbf{N}}\sigma_{k}} -\frac{1}{\sup_{k \in \mathbf{N}}\sigma_{k}}  
+ \eta_{2}\norm{K}^{i} \sup_{k \in \mathbf{N}}(\xi -\alpha_{k})^{2}
$
imply that
$
2\mu\geq \frac{1}{\xi \inf_{k \in \mathbf{N}} 
	\tau_{k}} -\frac{1}{\sup_{k \in \mathbf{N}}\tau_{k}}+  \eta_{3} \norm{K}^{i}\sup_{k 
	\in \mathbf{N}}\beta_{k}^{2}
$
and 
$
2 \nu \geq \frac{1}{\xi \inf_{k \in 
		\mathbf{N}}\sigma_{k}} -\frac{1}{\sup_{k \in \mathbf{N}}\sigma_{k}}  
+ \eta_{2}\norm{K}^{i} \sup_{k \in \mathbf{N}}(\xi -\alpha_{k})^{2}.
$
Combine this result with our rest assumptions and 
\Cref{theorem:linearconverg:K,theorem:linearconverg:Ksquare}
to know that the sequence of iterations $\left((x^{k},y^{k})\right)_{k \in  \mathbf{N}}$ 
is bounded, which, via \cref{eq:x_ky_k}, yields that
 the sequence
$\left( \left(\hat{x}_{k}, \hat{y}_{k} \right)\right)_{k \in \mathbf{N}}$ is bounded, 
and that there $M \in \mathbf{R}_{++}$ such that
\begin{align}\label{eq:theorem:linearconvergeineq:M}
 \frac{1}{2 \tau_{0}} \norm{x^{*} -x^{0}}^{2} 
	+\frac{1}{2\sigma_{0}} \norm{\hat{y}_{k} -y^{0}}^{2}   \leq M
\quad 	\text{and} \quad
 \frac{1}{2 \tau_{0}} \norm{\hat{x}_{k} -x^{0}}^{2} 
	+\frac{1}{2\sigma_{0}} \norm{y^{*} -y^{0}}^{2} \leq  M.
\end{align}

On the other hand, it is easy to see that $(\forall i \in \{1,2\})$
our assumption 
$
\mu\geq \frac{1}{\xi \inf_{k \in \mathbf{N}} 
	\tau_{k}} -\frac{1}{\sup_{k \in \mathbf{N}}\tau_{k}}+  \eta_{3} \norm{K}^{i}\sup_{k 
	\in \mathbf{N}}\beta_{k}^{2}
$
and 
$
\nu \geq \frac{1}{\xi \inf_{k \in 
		\mathbf{N}}\sigma_{k}} -\frac{1}{\sup_{k \in \mathbf{N}}\sigma_{k}}  
+ \eta_{2}\norm{K}^{i} \sup_{k \in \mathbf{N}}(\xi -\alpha_{k})^{2}
$
ensures the condition $(\forall k \in \mathbf{N})$ 
$\mu +\frac{1}{\tau_{k}} - \eta_{3}\norm{K}^{i} \beta_{k}^{2}  \geq \frac{1}{\tau_{k+1} 
	\xi}$ and
$ \nu  +\frac{1}{\sigma_{k}} - \eta_{2} \norm{K}^{i}  \left(\xi 
-\alpha_{k-1} \right)^{2} \geq \frac{1}{\sigma_{k+1}\xi}$ 
required in condition (a) with $i=1$ and in condition $b$ with $i=2$
 in  \cref{prop:linearconvergeineq}.

Moreover, 	because we assume that $\xi \sup_{k \in \mathbf{N}} \tau_{k} \sup_{k \in 
	\mathbf{N}} \sigma_{k} \norm{K}^{2}< 1$, 
we are able to take $\eta_{4} \in \mathbf{R}_{++}$ such that
$0<\inf_{k \in \mathbf{N}}\tau_{k} \leq \sup_{k \in \mathbf{N}}\tau_{k} \leq \eta_{4}$ and 
$\eta_{4}\xi \norm{K}^{2} 
<\frac{1}{ \sup_{k \in \mathbf{N}}\sigma_{k}}$, which leads to the required condition
\begin{align*}
(\forall k \in \mathbf{N}) \quad 	\frac{1}{\sigma_{k+1}}- \xi \eta_{4}\norm{K}^{2} \geq 
0
	\quad	\text{and} \quad
	 \eta_{4} \geq \tau_{k}.
\end{align*}
of  \cref{prop:linearconvergeineq}.

Furthermore, it is clear that our assumptions
$\norm{K} \left( \eta_{1} \xi^{2} +\frac{1}{\eta_{2}}\right) \leq \frac{\xi}{\sup_{i \in 
		\mathbf{N}}\tau_{i}}$, and
$\norm{K} \left(\frac{1}{\eta_{1}} +\frac{1}{\eta_{3}}\right) \leq \frac{1}{\sup_{i \in 
		\mathbf{N}}\sigma_{i}}$
	in condition (a) above
ensures
\begin{align*}
	(\forall k \in \mathbf{N}) \quad     \norm{K} \left(\eta_{1}\xi^{2}
	+ \frac{1}{\eta_{2}}\right) \leq \frac{\xi}{\tau_{k-1}} 
	\quad \text{and} \quad 
	\frac{1}{ \sigma_{k}} - \norm{K} \left(\frac{1}{\eta_{1}} +\frac{1}{\eta_{3}} \right) \geq 0, 
\end{align*}
which are the rest condition required  in 
 \cref{prop:linearconvergeineq}(a).
Additionally, it is immediate to see that our assumption 
$\eta_{1} \norm{K}^{2}\xi +\frac{1}{\eta_{2}\xi} \leq \frac{1}{\sup_{k \in 
		\mathbf{N}}\tau_{k}}$
and
$\frac{1}{\eta_{1}} +\frac{1}{\eta_{3}} \leq \frac{1}{\sup_{k \in \mathbf{N}} \sigma_{k}}$
in our condition (b) above,
guarantees the required condition
\begin{align*}
	(\forall k \in \mathbf{N}) \quad  \eta_{1} \norm{K}^{2}\xi^{2}+\frac{1}{\eta_{2}} \leq 
	\frac{\xi}{\tau_{k-1}} \quad \text{and} \quad 
	\frac{1}{\sigma_{k}}  -\frac{1}{\eta_{1}} -\frac{1}{\eta_{3}} \geq 0. 
\end{align*}
in \cref{prop:linearconvergeineq}(b).

Altogether, all required assumptions of \cref{prop:linearconvergeineq} hold in our case
under condition (a) or (b),
 and we are able to apply  \cref{prop:linearconvergeineq} to derive that for every $k \in 
 \mathbf{N}$,
\begin{subequations}
	\begin{align*}
		&f(\hat{x}_{k},\hat{y}_{k}) -f(x^{*},y^{*}) 
		\leq \xi^{k} \left(  \frac{1}{2 \tau_{0}} \norm{x^{*} -x^{0}}^{2} 
		+\frac{1}{2\sigma_{0}} \norm{\hat{y}_{k} -y^{0}}^{2} \right)
		 \stackrel{\cref{eq:theorem:linearconvergeineq:M}}{\leq }  \xi^{k}M;\\
		&f(x^{*},y^{*}) -f(\hat{x}_{k},\hat{y}_{k})
		\leq \xi^{k}\left(  \frac{1}{2 \tau_{0}} \norm{\hat{x}_{k} -x^{0}}^{2} 
		+\frac{1}{2\sigma_{0}} \norm{y^{*} -y^{0}}^{2} \right)
		\stackrel{\cref{eq:theorem:linearconvergeineq:M}}{\leq }  \xi^{k}M.
	\end{align*}
\end{subequations} 
Therefore, we conclude that 
the sequence $\left(f\left( \hat{x}_{k},\hat{y}_{k} \right)\right)_{k \in \mathbf{N}}$
converges linearly to the point $f(x^{*},y^{*})$ with a rate $\xi \in (0,1)$.
\end{proof}

To apply the linear convergence results stated in 
\cref{theorem:linearconvergeineq} easily in practice , 
we provide the following results with clear assumptions and with all involved parameters
in our algorithm \cref{eq:fact:algorithmsymplity} being constants.

\begin{corollary} \label{corollary:linearconvergeineq}
Let $(x^{*},y^{*})$ be a saddle-point of $f$. Suppose that 
\begin{align}  \label{eq:corollary:linearconvergeineq:constant}
	(\forall k \in \mathbf{N} \cup \{-1\}) \quad \tau_{k} \equiv \tau \in \mathbf{R}_{++}, 
	\sigma_{k} \equiv  \sigma \in \mathbf{R}_{++}, 
	\alpha_{k} \equiv \alpha \in \mathbf{R}_{++}, \text{ and } 
	\beta_{k} \equiv \beta \in \mathbf{R}_{+}.
\end{align}
Recall that $\mu>0$, $\nu>0$, $\norm{K} \geq 0$ are known. 
Let $\tau >0$ and $\sigma >0$ be small enough such that 
$\norm{K}^{2} <\frac{\mu +\frac{1}{\tau}}{\sigma}$ and 
$\norm{K}^{2} < \frac{ \nu +\frac{1}{\sigma}}{\tau}$. 
Let $\alpha$ be in $\left( \frac{1}{  \mu \tau +1} , 1\right)$ such that $\alpha \geq 
\frac{1}{\nu \sigma +1}$, and $\alpha \tau \sigma \norm{K}^{2} <1$.
Let $\beta$ satisfy that  
$\norm{K}^{2} \beta^{2} \sigma < \left(\mu 
+\frac{1}{\tau}-\frac{1}{\alpha 
	\tau} \right)\left( 1-\alpha \tau \sigma \norm{K}^{2}\right)
$.
Let $\eta_{4}$ be in $\mathbf{R}_{++}$ such that 
$\tau \leq \eta_{4}$ and $\alpha \sigma \eta_{4}\norm{K}^{2} <1$.

Suppose that one of the following assumptions hold. 
\begin{enumerate}
	\item \label{corollary:linearconvergeineq:K} 
		Let $\eta_{3}$ be in $\mathbf{R}_{++}$ such that 
	$\frac{\sigma \norm{K}}{1 -\alpha \tau \sigma \norm{K}^{2}} < \eta_{3} $ and
	$\eta_{3}\norm{K} \beta^{2} \leq  \mu +\frac{1}{\tau} 
	-\frac{1}{\alpha \tau}$.
	Let $\eta_{1}$ be in $\mathbf{R}_{++}$ such that 
	$\eta_{1} \geq \frac{\sigma \norm{K}\eta_{3}}{\eta_{3} -\sigma \norm{K}}$ and
	$\alpha \tau \eta_{1}\norm{K} <1$.
	Let $\eta_{2}$ be in $\mathbf{R}_{++}$ such that 
 $\eta_{2} \geq \frac{\tau \norm{K}}{\alpha - \tau \eta_{1} \norm{K}\alpha^{2}}$. 	
	\item  \label{corollary:linearconvergeineq:Ksquare} 
	Let $\eta_{3}$ be in $\mathbf{R}_{++}$ such that 
	$\eta_{3} >\frac{\sigma}{1-\alpha \tau \sigma \norm{K}^{2}}$
	and $\norm{K}^{2}\beta^{2}\eta_{3} \leq \mu +\frac{1}{\tau} -\frac{1}{\alpha \tau}$.
	Let $\eta_{1}$ be in $\mathbf{R}_{++}$ such that 
	$\eta_{1} \geq \frac{\sigma \eta_{3}}{\eta_{3} -\sigma}$
	and $\alpha \tau \eta_{1}\norm{K}^{2} <1$.
	Let $\eta_{2}$ be in $\mathbf{R}_{++}$ such that 
	$\eta_{2} \geq \frac{\tau}{\alpha(1-\alpha \tau \eta_{1}\norm{K}^{2})}$. 
\end{enumerate}
Then there exists $M \in \mathbf{R}_{++}$ such that  for every $ k \in \mathbf{N}$,
\begin{subequations}
	\begin{align*}
		&f(\hat{x}_{k},\hat{y}_{k}) -f(x^{*},y^{*}) 
		\leq  \alpha^{k} \left(  \frac{1}{2 \tau_{0}} \norm{x^{*} -x^{0}}^{2} 
		+\frac{1}{2\sigma_{0}} \norm{\hat{y}_{k} -y^{0}}^{2} \right) \leq \alpha^{k}M;\\
		&f(x^{*},y^{*}) -f(\hat{x}_{k},\hat{y}_{k})
		\leq  \alpha^{k}\left(  \frac{1}{2 \tau_{0}} \norm{\hat{x}_{k} -x^{0}}^{2} 
		+\frac{1}{2\sigma_{0}} \norm{y^{*} -y^{0}}^{2} \right)\leq \alpha^{k}M.
	\end{align*}
\end{subequations} 
Consequently, 
the sequence $\left(f\left( \hat{x}_{k},\hat{y}_{k} \right)\right)_{k \in \mathbf{N}}$
converges linearly to the point $f(x^{*},y^{*})$ with a rate $\xi \in (0,1)$.
\end{corollary}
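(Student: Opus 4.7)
The plan is to apply Theorem \ref{theorem:linearconvergeineq} with $\xi = \alpha$, and to verify each of its hypotheses by invoking the corresponding part of Lemma \ref{lemma:infsupconstants} with the parameter choice $\zeta = 1$. Note that under the constant-parameter regime \cref{eq:corollary:linearconvergeineq:constant}, one has $\inf_{k} \tau_{k} = \sup_{k}\tau_{k} = \tau$, $\inf_{k}\sigma_{k} = \sup_{k}\sigma_{k} = \sigma$, and $\sup_{k}(\xi - \alpha_{k})^{2} = 0$, which greatly simplifies most of the inequalities.

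First, I would use Lemma \ref{lemma:infsupconstants}\cref{lemma:infsupconstants:munu}, \cref{lemma:infsupconstants:alpha}, and \cref{lemma:infsupconstants:beta} with $\zeta = 1$ to justify that $\tau, \sigma, \alpha, \beta$ can indeed be chosen as prescribed, and to obtain $\alpha \in (0,1)$ with $\alpha\tau\sigma\norm{K}^{2} < 1$. Setting $\xi := \alpha$, this immediately gives $\xi \in (0,1)$ and $\xi \sup_{k}\tau_{k}\sup_{k}\sigma_{k}\norm{K}^{2} = \alpha\tau\sigma\norm{K}^{2} < 1$, which are the opening requirements of Theorem \ref{theorem:linearconvergeineq}. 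Next, Lemma \ref{lemma:infsupconstants}\cref{lemma:infsupconstants:eta4} supplies $\eta_{4}$ satisfying $\eta_{4} \geq \sup_{i}\tau_{i}$ and $\frac{1}{\sup_{i}\sigma_{i}} - \xi\eta_{4}\norm{K}^{2} > 0$, matching the corresponding hypothesis on $\eta_{4}$ in Theorem \ref{theorem:linearconvergeineq}.

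For the remaining hypotheses, I would split into the two announced sub-cases. In sub-case \cref{corollary:linearconvergeineq:K}, I would invoke Lemma \ref{lemma:infsupconstants}\cref{lemma:infsupconstants:eta3:a} (with $\zeta = 1$) to produce $\eta_{3}$ yielding $\mu \geq \frac{1}{\xi\inf_{k}\tau_{k}} - \frac{1}{\sup_{k}\tau_{k}} + \eta_{3}\norm{K}\sup_{k}\beta_{k}^{2}$, then \cref{lemma:infsupconstants:eta1a} to produce $\eta_{1}$ with $\norm{K}(\tfrac{1}{\eta_{1}} + \tfrac{1}{\eta_{3}}) \leq \tfrac{1}{\sup_{i}\sigma_{i}}$, and finally \cref{lemma:infsupconstants:eta2a} to produce $\eta_{2}$ with $\norm{K}(\eta_{1}\alpha^{2} + \tfrac{1}{\eta_{2}}) \leq \tfrac{\alpha}{\sup_{i}\tau_{i}}$; combined with $\sup_{k}(\xi - \alpha_{k})^{2} = 0$ and $\nu > 0$, this checks condition (a) of Theorem \ref{theorem:linearconvergeineq}. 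In sub-case \cref{corollary:linearconvergeineq:Ksquare}, the analogous chain Lemma \ref{lemma:infsupconstants}\cref{lemma:infsupconstants:eta3:b}, \cref{lemma:infsupconstants:eta1b}, \cref{lemma:infsupconstants:eta2b} (still with $\zeta = 1$) verifies condition (b).

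With all hypotheses confirmed, Theorem \ref{theorem:linearconvergeineq} applied with $\xi = \alpha$ immediately gives both inequalities of the corollary together with the constant $M \in \mathbf{R}_{++}$ and the linear convergence statement. The main obstacle is not analytical but organizational: correctly matching each clause of the corollary's hypotheses to the precise sub-part of Lemma \ref{lemma:infsupconstants} that both produces the required auxiliary constant and implies the corresponding inequality of Theorem \ref{theorem:linearconvergeineq}, especially since the two sub-cases \cref{corollary:linearconvergeineq:K} and \cref{corollary:linearconvergeineq:Ksquare} call upon different parts of Lemma \ref{lemma:infsupconstants} (the $\norm{K}$ versus $\norm{K}^{2}$ variants). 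Once this bookkeeping is complete, no further estimation is needed; the result drops out directly from the theorem.
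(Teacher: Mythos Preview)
Your proposal is correct and follows essentially the same route as the paper: set $\xi=\alpha$ in Theorem~\ref{theorem:linearconvergeineq} and verify its hypotheses by applying Lemma~\ref{lemma:infsupconstants} with $\zeta=1$, splitting into the $\norm{K}$ and $\norm{K}^{2}$ sub-cases via parts \cref{lemma:infsupconstants:eta3:a}, \cref{lemma:infsupconstants:eta1a}, \cref{lemma:infsupconstants:eta2a} and \cref{lemma:infsupconstants:eta3:b}, \cref{lemma:infsupconstants:eta1b}, \cref{lemma:infsupconstants:eta2b} respectively. The paper additionally cites Lemma~\ref{lemma:infsupconstants}\cref{lemma:infsupconstants:nu} for the $\nu$ inequality, but your direct observation that $\sup_{k}(\xi-\alpha_{k})^{2}=0$ accomplishes the same thing.
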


\begin{proof}
Clearly, it suffices to show that all requirements of \cref{theorem:linearconvergeineq}
under our assumptions on the constant involved parameters.

Set $\xi =\alpha$ in \cref{theorem:linearconvergeineq}. 

Applying  
\cref{lemma:infsupconstants}\cref{lemma:infsupconstants:munu}$\&$\cref{lemma:infsupconstants:alpha}$\&$\cref{lemma:infsupconstants:beta}
with $\zeta =1$, we are able to take 
$\tau >0$ and $\sigma >0$  small enough satisfying 
$\norm{K}^{2} <\frac{\mu +\frac{1}{\tau}}{\sigma}$ and 
$\norm{K}^{2} < \frac{\nu +\frac{1}{\sigma}}{\tau}$, 
take $\alpha\in \left( \frac{1}{  \mu \tau +1} , 1\right) \subseteq (0,1)$ satisfying 
$\alpha \geq  \frac{1}{\nu \sigma +1}$ and $\alpha \tau \sigma \norm{K}^{2} <1$,
take $\beta$ satisfying
$\sigma \norm{K}^{2}\beta^{2} \leq 
\left(1-\alpha \tau \sigma \norm{K}^{2}\right)
\left(\mu +\frac{1}{\tau} -\frac{1}{\alpha \tau}\right)$.
Moreover, because $\alpha =\xi$, 
the required condition 
$\xi \in (0,1)$ and
$\xi \sup_{k \in \mathbf{N}} \tau_{k} \sup_{k \in \mathbf{N}} \sigma_{k} 
\norm{K}^{2}< 
1$ in \cref{theorem:linearconvergeineq} hold in our case. 

Employing \cref{lemma:infsupconstants}\cref{lemma:infsupconstants:nu} 
with $\zeta =1$ and with $i=1$ under condition (a) 
(resp.\,with $i=2$ under condition (b)),
and using $\xi 
=\alpha$,
we satisfy
$
 \nu \geq \frac{1}{\xi \inf_{k \in 
		\mathbf{N}}\sigma_{k}} -\frac{1}{\sup_{k \in \mathbf{N}}\sigma_{k}}  
+ \eta_{2}\norm{K}^{i} \sup_{k \in \mathbf{N}}(\xi -\alpha_{k})^{2}
$
in  \cref{theorem:linearconvergeineq}.

Suppose condition (a) holds. 
Applying 
\cref{lemma:infsupconstants}\cref{lemma:infsupconstants:eta4}$\&$\cref{lemma:infsupconstants:eta3:a}$\&$\cref{lemma:infsupconstants:eta1a}$\&$\cref{lemma:infsupconstants:eta2a}
with $\zeta =1$, 
we are able to take $\eta_{4}$, $\eta_{3}$, $\eta_{1}$, and $\eta_{2}$ in
$\mathbf{R}_{++}$  such that 
$\eta_{4} \geq \tau$, $\alpha \sigma \eta_{4} \norm{K}^{2} < 1$,  
$\eta_{3} > \frac{\sigma \norm{K}}{ 1 -\alpha \tau \sigma \norm{K}^{2}}$,
$\eta_{3}\norm{K} \beta^{2} <  \mu +\frac{1}{\tau} 
-\frac{1}{\alpha \tau}$,
$\eta_{1} \geq \frac{\sigma \norm{K}\eta_{3}}{\eta_{3} -\sigma \norm{K}}$, 
$\alpha \tau \eta_{1}\norm{K} <1$, and
$\eta_{2} \geq \frac{\tau \norm{K}}{\alpha - \tau \eta_{1} \norm{K}\alpha^{2}}$;
moreover, our choices of $\eta_{4}$, $\eta_{3}$, $\eta_{1}$, and $\eta_{2}$ in
$\mathbf{R}_{++}$  satisfy the conditions 
$\eta_{4} \geq \sup_{i \in \mathbf{N}} \tau_{i}$,
$\frac{1}{\sup_{i \in \mathbf{N}}\sigma_{i}} -\xi \eta_{4} \norm{K}^{2} >0$,
$
 \mu\geq \frac{1}{\xi \inf_{k \in \mathbf{N}} 
	\tau_{k}} -\frac{1}{\sup_{k \in \mathbf{N}}\tau_{k}}+  \eta_{3} \norm{K} \sup_{k 
	\in \mathbf{N}}\beta_{k}^{2}
$,
$\norm{K} \left(\frac{1}{\eta_{1}} +\frac{1}{\eta_{3}}\right) \leq \frac{1}{\sup_{i \in 
		\mathbf{N}}\sigma_{i}}$, and
$\norm{K} \left( \eta_{1} \xi^{2} +\frac{1}{\eta_{2}}\right) \leq \frac{\xi}{\sup_{i \in 
		\mathbf{N}}\tau_{i}}$, 
	which are the rest conditions required in the condition (a) of   
		\cref{theorem:linearconvergeineq}.

Suppose condition (b) holds. 
Employing 
	\cref{lemma:infsupconstants}\cref{lemma:infsupconstants:eta4}$\&$\cref{lemma:infsupconstants:eta3:b}$\&$\cref{lemma:infsupconstants:eta1b}$\&$\cref{lemma:infsupconstants:eta2b},
with $\zeta =a$, we are able to take 
$\eta_{4}$, $\eta_{3}$, $\eta_{1}$, and $\eta_{2}$ in
$\mathbf{R}_{++}$  such that 
$\tau \leq \eta_{4}$, $\alpha \sigma \eta_{4}\norm{K}^{2} <1$,
$\eta_{3} >\frac{\sigma}{1-\alpha \tau \sigma \norm{K}^{2}}$,
$\norm{K}^{2}\beta^{2}\eta_{3} \leq \mu +\frac{1}{\tau} -\frac{1}{\alpha \tau}$,
$\eta_{1} \geq \frac{\sigma \eta_{3}}{\eta_{3} -\sigma}$,
$\alpha \tau \eta_{1}\norm{K}^{2} <1$,
and
$\eta_{2} \geq \frac{\tau}{\alpha(1-\alpha \tau \eta_{1}\norm{K}^{2})}$,
which ensures the conditions
$ \sup_{k \in \mathbf{N}}\tau_{k} \leq \eta_{4}$,
$\eta_{4}\xi \norm{K}^{2} <\frac{1}{ \sup_{k \in \mathbf{N}}\sigma_{k}}$,
$
 \mu\geq \frac{1}{\xi \inf_{k \in \mathbf{N}} 
	\tau_{k}} -\frac{1}{\sup_{k \in \mathbf{N}}\tau_{k}}+  \eta_{3} \norm{K}^{2}\sup_{k 
	\in \mathbf{N}}\beta_{k}^{2}
$,
and 
$\eta_{1} \norm{K}^{2}\xi +\frac{1}{\eta_{2}\xi} \leq \frac{1}{\sup_{k \in 
		\mathbf{N}}\tau_{k}}$, which are the rest conditions
required in  the condition (b) of   
\cref{theorem:linearconvergeineq}.

 Altogether, the  proof is complete.
\end{proof}

  \section*{Acknowledgments}
  Hui Ouyang thanks Professor Boyd Stephen for his insight and expertise comments on
   the topic of saddle-point problems
  and all unselfish support. 
 Hui Ouyang acknowledges the support of
 the Natural Sciences and Engineering Research Council of Canada (NSERC), 
 [funding reference number PDF – 567644 – 2022]. 
 
 
 \addcontentsline{toc}{section}{References}
 \bibliographystyle{abbrv}
 \bibliography{ccspp_proximity_stronglyconvex}

\end{document}